\numberwithin{equation}{section}
\theoremstyle{plain}
\newtheorem{thm}{Theorem}[section]
\newtheorem*{thm-no}{Theorem}
\newtheorem{prop}[thm]{Proposition}
\newtheorem{lemma}[thm]{Lemma}
\newtheorem{clly}[thm]{Corollary}
\newtheorem{maintheorem}{Theorem}
\theoremstyle{definition}
\newtheorem{remark}[thm]{Remark}
\newtheorem{defi}[thm]{Definition}
\newtheorem{claim}{Claim}
\newcommand{\Diff}{\operatorname{Diff}}
\def\Diff{{\rm Diff}}
\def\dim{{\rm dim}}
\def\HC{{\rm HC}}
\def\Per{{\rm Per}}
\DeclareMathOperator{\dime}{dim}
\numberwithin{equation}{section}
\def \cB {{\mathcal B}}
\def \cF {{\mathcal F}}
\def \cM {{\mathcal M}}
\def \cO {{\mathcal O}}
\def \cP {{\mathcal P}}
\def \cQ {{\mathcal Q}}
\def \cR {{\mathcal R}}
\def \cU {{\mathcal U}}
\def \cV {{\mathcal V}}
\def\bN{\mathbb{N}}
\def\bZ{\mathbb{Z}}
\def\bR{\mathbb{R}}
\begin{document}
\title[Finite Measures of Maximal Entropy]{Finite Measures of Maximal Entropy for an Open Set of Partially Hyperbolic Diffeomorphisms}
\author[J. Mongez]{Juan Carlos Mongez}
\address{Instituto de Matem\'atica, Universidade Federal do Rio de Janeiro, Cidade Universit\'aria - Ilha do Fund\~ao, Rio de Janeiro 21945-909,  Brazil}
\email{jmongez@im.ufrj.br}

\author[M. Pacifico]{Maria Jose Pacifico}
\address{Instituto de Matem\'atica, Universidade Federal do Rio de Janeiro, Cidade Universit\'aria - Ilha do Fund\~ao, Rio de Janeiro 21945-909,  Brazil}
\email{pacifico@im.ufrj.br}
\date{}

\begin{abstract}
We consider partially hyperbolic diffeomorphisms $f$ with a one-dimensional central direction  such that the unstable entropy is different from the stable entropy. Our main result proves that such maps have a finite number of ergodic measures of maximal entropy. Moreover, any $C^{1+}$ diffeomorphism near $f$ in the $C^1$ topology possesses at most the same number of ergodic measures of maximal entropy. 
These results extend the findings in \cite{BCS22} to arbitrary dimensions and provides
 an open class of non Axiom A  systems of diffeomorphisms  exhibiting a finite number of ergodic measures of maximal entropy.
We believe our technique, essentially distinct from the one in \cite{BCS22}, is robust and may find applications in further contexts.
\end{abstract}

\begin{thanks}
{MJP and JCM were partially supported by CAPES-Finance Code 001.  MJP was partially supported  by CNPq-Brazil Grant No. 302565/2017-5, FAPERJ (CNE) Grant-Brazil No. E-26/202.850/2018(239069), Pronex: E-26/010.001252/2016.}
\end{thanks}
\maketitle

%\tableofcontents

\section{Introduction}

A compelling feature of dynamical systems involves the rate at which the number of distinguishable orbits grows, a quantifiable aspect captured by the concept of 
entropy.
The computation of entropy can be examined through both topological and invariant measure lenses, interconnected by the variational principle: the topological
entropy is the supremum over the metric entropies of all invariant probability measures.
An invariant measure that attains the highest entropy is called a measure of maximal entropy and its study is a natural way of describing the behavior of most of the relevant orbits of a system.

Sinai, {Margulis}, Ruelle and Bowen pioneered the study of measures of maximal entropy in the 1970s. 
Sinai {and Margulis}, the trailblazers in the field, established   the existence and finiteness of maximal entropy measures for Anosov diffeomorphisms \cite{Sin72,Mar70}.
Subsequently, Ruelle and Bowen  extended and broadened this approach to encompass uniformly hyperbolic (Axiom A)  systems \cite{Bow71, Rue68, Rue78}.

In \cite{New89} Newhouse demonstrated that  $C^\infty$ diffeomorphisms on a compact manifold  exhibit {maximal} entropy measures (but finite smoothness does not, according to Misiurewicz \cite{Mis73}).
Newhouse posed the question of whether the {number} of such measures is finite, particularly for diffeomorphisms on surfaces with positive topological entropy  \cite[Problem 2]{New91}.
As the skew product of a linear Anosov diffeomorphism by a rational rotation exhibits an
infinite number of such measures, this question is relevant only for surfaces. 
Recently, Buzzi, Crovisier and Sarig provided a positive answer to this question  in  \cite{BCS22}. 

In the context of partially hyperbolic diffeomorphisms, the existence and uniqueness 
of equilibrium states is not clear at all. 
In three dimensional manifolds, Buzzi
et all   proved in \cite{BFSV11} that a certain derived from Anosov construction due to Ma\~n\'e 
also has this uniqueness property.
In \cite{NY83}, Newhouse and Young have
shown that some partially hyperbolic examples on  $T^4$ have a unique  measure of maximal entropy.
The results in \cite{CFT19} includes the uniqueness of ergodic measure of maximal entropy  for Ma\~n\'e's example in \cite{M78}. 
Recently, in \cite{LSYY}, the authors prove that Shub's example has a unique  measure of maximal entropy. 

For accessible partially hyperbolic diffeomorphisms of 3-dimensional manifolds having compact center leaves,  either there is a unique  measure of maximal entropy,  or there are a finite number of hyperbolic  measures of maximal entropy, see \cite{HHTU12}.
In  \cite{UVY21} the authors characterize the maximal entropy measure for $C^2$ partially hyperbolic systems with circle central bundles by means of suitable finite set of saddle points.

Let us also mention that establishing the existence and, more importantly, the uniqueness of
ergodic maximal entropy measures  has been successful for flows  without any singularity. Among such works, let us mention the works \cite{Bow74, Kn98, BBFS21, BD20, GR23, LSYY}.

To establish the number of maximum entropy measures for flows with singularities is a challenging task. To the best of the authors' knowledge, there is only one article currently available on this subject, \cite{PYY}, where it is proved  that for a sectional-hyperbolic attractor $\Lambda$, in a $C^1$-open and dense family of flows (including the classical Lorenz attractor, \cite{Lo63}), if the point masses at singularities are not equilibrium states, then there exists a unique equilibrium state supported on $\Lambda$. In particular, there exists a unique measure of maximal entropy for the flow restricted to $\Lambda$.  

Back to the diffeomorphism context, we recall that the notion of unstable and stable entropy, denoted as $h^u(f)$ and $h^s(f)$ respectively, {was introduced in \cite{HSX08}, reminiscent of the works by Yomdin and Newhouse (\cite{Yom87, New89}) and studied  in \cite{Yan21, HHW17, Tah21}}. 

In \cite{UVYY24}, it is shown that there is a finite number of maximal 
u-entropy measures for $C^1$ diffeomorphisms that factor over Anosov and has mostly contracting center.  In the same setting as in \cite{UVYY24}, the same authors  prove the exponential decay of correlations for a u-measure of maximal entropy \cite{UVYY}.

In our previous work \cite{MP23}, we delved into the characteristics of unstable and stable entropy.
 By employing the criterion developed in \cite{CT16, PYY22}, we established that for partially hyperbolic systems possessing a unidimensional central bundle, wherein the unstable entropy surpasses the stable entropy and the unstable foliation is minimal, there robustly exists a unique ergodic measure of maximal entropy. 

In this paper, we address the problem of the existence and uniqueness (finiteness) of entropy maximal measures in the partially hyperbolic context. 
We consider partially hyperbolic diffeomorphisms $f$ with a one-dimensional central direction  such that the unstable entropy exceeds the stable entropy. Our main result proves that such maps have a finite number of ergodic measures of maximal entropy. Moreover, any $C^{1+}$ diffeomorphism near $f$ in the $C^1$ topology possesses at most the same number of ergodic measures of maximal entropy. 
These results extend the findings in \cite{BCS22} to higher dimensions and provides
 an open class of non Axiom A  systems of diffeomorphisms  exhibiting a finite number of ergodic measures of maximal entropy.
To be more precise, let $f:M\to M$ be a continuous function on a compact metric set. Denoting by $\cM_f$ the set of $f$-invariant probability measures,  the variational principle  establishes   the following  relation 
\begin{equation}\label{e-variacional-entropia}
    h(f) = \sup\{h_\mu(f) : \mu \in \cM_f\},
\end{equation}
where $h(f)$ is the topological entropy of $f$ and $h_\mu(f)$ is the metric entropy with respect to the invariant measure $\mu$.

\begin{defi}\label{mme}
A measure of maximal entropy (m.m.e.) is a measure
 $\mu \in \cM_f$ such that $h_\mu(f)$ achieves the supremum in equation \eqref{e-variacional-entropia}.
\end{defi}

In the present work,  we withdraw the hypotheses about  the minimality of the unstable foliation in \cite{MP23}  and prove:
\vspace{0.2cm}

{\bf Main Theorem.} Let $f:M\to M$ be a $C^{1+}$ partially hyperbolic diffeomorphism of a compact manifold $M$ with $TM=E^u \oplus E^c \oplus E^s $ and ${\dime}(E^c)=1$. 
If $h^u(f)\neq h^s(f)$, then $f$ has only a  finite number of ergodic measures of maximal entropy. Moreover, there exists a $C^1$ neighborhood $U$ of $f$, such that for any $C^{1+}$ diffeomorphism $g$ within $U$, the number of ergodic measures of maximal entropy of $g$ is not greater than the number of maximal entropy ergodic measures of $f$.

\subsection{Homoclinic classes with high entropy}
Let $f$ be a $C^r$ diffeomorphism on a closed manifold. Consider a hyperbolic periodic orbit of saddle type ${\cO} = \{f^i(x) : i = 0, \ldots, p - 1\}$ such that $p \geq 1$ and $f^p(x)=x$. 

The stable and unstable manifolds of ${\cO}$ are given by 
$$W^s({\cO})=\cup_{y\in {\cO}}W^s(y) \quad \mbox{and} \quad W^u({\cO})=\cup_{y\in {\cO}}W^u(y) \quad \mbox{respectively,}$$
where $W^s(y)=\{x\in M: d(f^n(x),f^n(y))\to 0 \mbox{ when } n\to \infty\}$ and $W^u(y)=\{x\in M: d(f^n(x),f^n(y))\to 0 \mbox{ when } n\to - \infty\}$. These are $C^{ r}$  manifolds. For details see Subsection \ref{subsectionHomoclinicClass}.

Let
$$
{\Per}_h(f) := \{{\cO} : {\cO} \text{ is a hyperbolic periodic orbit of saddle type}\}.
$$

 Given ${\cO}_1, {\cO}_2 \in {\Per}_h(f)$, let $W^s({\cO}_1)\pitchfork W^s(\cO_2)$ denote the collection of transverse intersection points of $W^s({\cO}_1)$ and  $W^u({\cO}_2)$. 
Two ${\cO}_1, {\cO}_2 \in {\Per}_h(f)$ are {\em  homoclinically related} if $W^u(O_i) \pitchfork W^s(O_j) \neq \emptyset$ for $i\neq j$, we then write ${\cO}_1 \sim {\cO}_2$.

The homoclinic class of a hyperbolic periodic orbit ${\cO}$ of saddle type is defined by
$$
{\HC}({\cO}):=\overline{\left\{{\cO}^{\prime} \in \Per_h(f): {\cO}^{\prime} \sim {\cO}\right\}} \text {. }
$$

 An ergodic invariant probability measure $\mu$ is hyperbolic of saddle type, if it has at least one positive Lyapunov exponent, at least one negative Lyapunov exponent, and no zero Lyapunov exponent. 
 When $\mu$ is a hyperbolic measure for $f$ and ${\cO}\in {\Per}_h(f)$ we write $\mu \sim {\cO}$ when $\mu$ is homoclinically related to ${\cO}$, see Subsection \ref{subsectionHyperbolicMeasures},  definitions \ref{Hyperbolicmeasure} and \ref{RelationMuO}.

The next theorem establishes that for a diffeomorphism $f$ as described in the Main Theorem, there exists a finite set of hyperbolic periodic orbits such that   every ergodic measure with high entropy is {homoclinically} related to one of them.   

Since  $h^{u}(f) = h^{s}(f^{\, -1}),$ see Definition \ref{def-u-s-entropia}, from now on, we only consider the case $h^{u}(f) > h^{s}(f)$; the other case follows by considering $f^{\, -1}$.

\begin{maintheorem}\label{MainTheoA}
Let $f:M\to M$ be a $C^{1+}$ partially hyperbolic diffeomorphism of a close manifold $M$ with $TM=E^u \oplus E^c \oplus E^s $ and ${\dime}(E^c)=1$. If $h^u(f)> h^s(f)$, then for any $h(f)\geq a>h^s(f)$ there exists an integer  $K_a>0$ and $K_a$ hyperbolic orbits  ${\cO}_1,\cdots {\cO}_{K_a}$ with $\operatorname{s-index} \, {\dime}(E^s)+1$  such that $h(f_{|HC({\cO}_i)})\geq a$ for every $i=1,\cdots K_a$ and  any  $\mu\in \cM_f^e$ with $h_\mu(g)>a$ must satisfy $\mu \sim {\cO}_i$ for some $1\leq j\leq K_a$.  
\end{maintheorem}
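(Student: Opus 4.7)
The plan is to combine an entropy/Lyapunov inequality that forces uniform hyperbolicity along the center bundle with a Pesin-theoretic compactness argument producing finitely many candidate homoclinic classes. Fix $a$ with $h(f) \geq a > h^s(f)$ and set $\delta := a - h^s(f) > 0$. The first task is to show that every ergodic $\mu$ with $h_\mu(f) > a$ is hyperbolic of $s$-index $\dime(E^s) + 1$, with Lyapunov exponents bounded away from zero by a constant depending only on $\delta$ and $f$. Applying the partially hyperbolic Ledrappier--Young type inequality to $f^{-1}$, using $h^u_\mu(f^{-1}) = h^s_\mu(f)$ and that the center exponent of $f^{-1}$ for $\mu$ equals $-\lambda^c_\mu$, one obtains
\[
h_\mu(f) = h_\mu(f^{-1}) \leq h^s_\mu(f) + \max\{0, -\lambda^c_\mu\} \leq h^s(f) + \max\{0, -\lambda^c_\mu\},
\]
which combined with $h_\mu(f) > a$ forces $\lambda^c_\mu \leq -\delta$. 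The uniform expansion of $E^u$ and contraction of $E^s$ from partial hyperbolicity supply the remaining exponent bounds.

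Since the only non-uniform Pesin behavior for such $\mu$ is confined to the one-dimensional center, and the bound $\lambda^c_\mu \leq -\delta$ is uniform across the family, one can construct a single compact uniform Pesin block $\Lambda^*_a \subset M$ on which the Pesin center-stable manifolds $W^{cs}_{\text{loc}}(x)$ (tangent to $E^s \oplus E^c$) and the partially hyperbolic unstable manifolds $W^u_{\text{loc}}(x)$ have sizes bounded below and vary continuously in $x \in \Lambda^*_a$. Standard Pesin theory guarantees $\mu(\Lambda^*_a) > 0$ for every ergodic $\mu$ with $h_\mu(f) > a$, while the compactness of $\Lambda^*_a$ together with its local product structure yields a uniform $\varepsilon > 0$ such that any two points of $\Lambda^*_a$ within distance $\varepsilon$ have transversal intersections $W^u_{\text{loc}} \pitchfork W^{cs}_{\text{loc}}$. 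Cover $\Lambda^*_a$ by finitely many open balls $U_1, \ldots, U_N$ of diameter less than $\varepsilon/2$. For each $j$ with $U_j \cap \Lambda^*_a \neq \emptyset$, Katok's closing lemma in the uniform Pesin setting produces a hyperbolic periodic orbit $\cO_j$ of $s$-index $\dime(E^s) + 1$ shadowing typical points of $U_j \cap \Lambda^*_a$; the local product structure then forces $\cO_j$ to be homoclinically related to every such typical point. Consequently any ergodic $\mu$ with $h_\mu(f) > a$ charges some $U_j \cap \Lambda^*_a$ and satisfies $\mu \sim \cO_j$. Because $\mu \sim \cO_j$ implies $\supp \mu \subset HC(\cO_j)$, the variational principle yields $h(f|_{HC(\cO_j)}) \geq h_\mu(f) > a$; discarding indices not charged by any high-entropy measure produces the required list $\cO_1, \ldots, \cO_{K_a}$ with $K_a \leq N$.

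The main obstacle lies in the \emph{uniformity} of the Pesin block $\Lambda^*_a$ and of the local-product constant $\varepsilon$ across the entire, a priori infinite, family of ergodic measures with $h_\mu(f) > a$: a naive construction would produce $\mu$-dependent Pesin blocks with degenerating geometry, and one could not rule out unboundedly many pairwise unrelated homoclinic classes. The uniform center-exponent bound $\lambda^c_\mu \leq -\delta$ obtained in the first paragraph is precisely what circumvents this difficulty, allowing a single compact block and a single finite cover to handle every candidate measure simultaneously. This same uniformity is what will later permit the argument to survive small $C^1$ perturbations, yielding the upper-semicontinuity statement in the full Main Theorem.
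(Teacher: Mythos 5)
Your overall strategy is the same as the paper's: the entropy gap forces $\lambda_c(f,\mu)\le h^s(f)-h_\mu(f)\le -(a-h^s(f))$ (this is Lemma \ref{mbe-implies-center-lyapunov-exponent<0}, proved exactly by your Ledrappier--Young argument for $f^{-1}$), and finiteness is then extracted from a fixed compact set of points carrying invariant manifolds of uniform size which every high-entropy ergodic measure must charge. The genuine gap is in your second paragraph: you assert that one can build \emph{a single} measure-independent compact block $\Lambda^*_a$ and that ``standard Pesin theory guarantees $\mu(\Lambda^*_a)>0$'' for every ergodic $\mu$ with $h_\mu(f)>a$. Standard Pesin theory produces blocks whose cut-off (tempering) constants depend on the measure: for each $\mu$ there is some constant $C(\mu)$ such that the block with that constant has positive $\mu$-measure, but a uniform bound on the \emph{average} contraction rate $\lambda^c_\mu\le-\delta$ does not by itself show that a block with a \emph{fixed} constant (hence fixed manifold size $\delta_r$ and fixed transversality scale $\varepsilon$, which your finite cover crucially uses) is charged by all measures in the family. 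You even name this uniformity as ``the main obstacle'' and then resolve it by assertion. The paper supplies the missing mechanism: it fixes $e^{h^s(f)-a}<\rho<1$, takes the measure-independent compact set $\Lambda_\rho=\{x:\|Df^n(x)|_{E^c}\|\le\rho^n\ \forall n\ge1\}$ (constants normalized to $1$), and applies Pliss's Lemma (Lemma \ref{Pliss lemma}) to the sequence $\log\|Df(f^j x)|_{E^c}\|$ together with Birkhoff's theorem to get the explicit lower bound $\mu(\Lambda_\rho)\ge\frac{\log\rho-\lambda_c(f,\mu)}{\log\rho-m(f)}>0$ for every ergodic $\mu$ with $h_\mu(f)\ge a$ (Lemma \ref{Hyerbolictime}). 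Without a Pliss-type (hyperbolic times) argument or an equivalent quantitative substitute, your single block, single cover, and uniform $\varepsilon$ are not justified, and this is precisely the step on which the whole finiteness argument rests.

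A secondary difference, not a gap but worth flagging: to produce the orbits $\cO_i$ you invoke Katok's closing lemma ball-by-ball and claim the closed orbit is homoclinically related to every typical point of the ball; the paper instead first shows by pigeonhole on $\delta_l/3$-balls centered in $\Lambda_\rho$ that there are only finitely many homoclinic classes of measures with entropy $\ge a$, and then obtains periodic orbits with $\cO_i\sim\mu_i$ and $HC(\cO_i)=HC_T(\mu_i)$ directly from Proposition \ref{HoclinicClassRelation} (i.e.\ \cite[Corollary 2.14]{BCS22}), which also immediately gives $\supp\mu\subset HC(\cO_i)$ and hence $h(f_{|HC(\cO_i)})\ge a$. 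Your closing-lemma route could be made to work (one must control the index and the size of the invariant manifolds of the shadowing orbit), but it adds technicalities and in any case does not repair the main gap above.
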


    It is important to note that the above theorem does not imply a spectral decomposition theorem like \cite[Theorem 1]{BCS22}. Our main difficulty arises from the potential existence of two hyperbolic periodic orbits that are not homoclinically related, but 
    their homoclinic classes are such that one is contained in the other.
    Consequently, it is impossible to make any statement about the entropy of the intersection between two homoclinic classes in Theorem \ref{MainTheoA}. 
    
    { Buzzi, Crovisier and Sarig in their notable work in \cite{BCS22} establishes that $C^\infty$ surface diffeomorphisms with positive topological entropy have at most a finite number of ergodic measures of maximum entropy in general. 
The approach there is strongly supported by the two-dimensionality  to bind topological disks ($su$-quadrilaterals) by two stable and two unstable segments.
These $su$-quadrilaterals {allow} them to obtain topological intersections between stable and unstable manifolds of  two different  measures with large entropy that are weak$^*$ close. 
Using  a dynamic version of a lemma by Sard, they conclude that some of these intersections are transversal leading to a homoclinic relation between the measures.
Therefore, replicating this strategy in a higher-dimensional context is unfeasible. 
We, instead, propose an alternative method in which we define a set where each of its points has a stable Pesin manifold of uniform size.
Taking advantage of Pliss's Lemma, we then prove that  the measure of this set for measures with high entropy is strictly positive. 
Then we were able to prove that for nearby points in this set  their stable and unstable manifolds   
intersect transversaly. This implies
that the number of homoclinic classes with measures with large entropy is finite. And then, using \cite[Corollary 2.14]{BCS22}, we conclude that 
the number of ergodic maximal entropy measures of $f$ is finite.
We believe that our method can be applied to  other higher dimensional contexts.}

    In general, it does not hold that every diffeomorphism  close  to a diffeomorphism with a finite number of measures of maximal entropy has also a finite number of measures of maximal entropy. 
    But for diffeomorphisms as in the theorem above we prove  that every $C^{1+}$ diffeomorphism  sufficiently close to $f$ also has only a finite number of ergodic m.m.e., see Corollary \ref{Finite-MME}.

%%%%%%%%%%%%%%%%%%%%%%%%%%%%%%%%%%%%%
%%%%%%%%%%%%%%%%%%%%%%%%%%%%%%%%%%%%%%
\subsection{Dependence of the number of ergodic measures of maximal entropy  on the diffeomorphism}

A natural question is  the dependence of the simplex of m.m.e.\ on the diffeomorphism.
For $C^\infty$ surface diffeomorphisms, it is proved in \cite{BCS22} that the number of 
ergodic m.m.e.\ varies semicontinuously on the diffeomorphism. 
Here we achieve the same property for $C^{1+}$ partially hyperbolic diffeomorphisms with a central bundle of dimension one and with unstable entropy bigger than the stable entropy. To announce precisely our result, let $N_{max}(f)$ be the number of ergodic m.m.e.\ of a diffeomorphism $f$. 
    
    \begin{maintheorem}\label{MainTheoB}
    Let $f:M\to M$ be a $C^{1+}$ partially hyperbolic diffeomorphism of a compact manifold $M$ with $TM=E^u \oplus E^c \oplus E^s $ and $\operatorname{dim}(E^c)=1$. If $h^u(f) > h^s(f)$  
    then the function $N_{max} : \operatorname{Diff}^{1+}(M) \to \mathbb{N} \cup \{\infty\}$ is upper semicontinuous in the $C^1$ topology, that is, $\limsup_{g \to f} N_{max}(g) \leq N_{max}(f)$.
    \end{maintheorem}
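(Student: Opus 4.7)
Theorem \ref{MainTheoB} is, up to reformulation, precisely the $C^1$-stability clause of the Main Theorem, so the plan is to derive it directly from the latter. The Main Theorem yields a $C^1$-neighborhood $U$ of $f$ in $\Diff^{1+}(M)$ such that
$$N_{max}(g)\le N_{max}(f)\quad\text{for every } g\in U\cap \Diff^{1+}(M).$$
Given any sequence $g_n\to f$ in the $C^1$ topology with $g_n\in\Diff^{1+}(M)$, we have $g_n\in U$ eventually, and therefore $N_{max}(g_n)\le N_{max}(f)$ for all $n$ large. Taking $\limsup$ gives $\limsup_{n\to\infty} N_{max}(g_n)\le N_{max}(f)$, which, since $N_{max}$ is integer-valued (and finite at $f$ by the Main Theorem), is exactly upper semicontinuity of $N_{max}$ at $f$ in the $C^1$ topology.

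All the substance therefore lies in the proof of the $C^1$-stability clause of the Main Theorem, which in turn rests on Theorem \ref{MainTheoA}. The strategy I would pursue there is to fix some threshold $a\in(h^s(f),h(f))$, invoke Theorem A to extract a finite family of hyperbolic periodic orbits $\cO_1,\dots,\cO_{K_a}$ whose homoclinic classes absorb every ergodic measure of $f$ with entropy exceeding $a$, and then transport this structure to nearby $g$ using: (i) the standard $C^1$-persistence of each $\cO_j$ as a hyperbolic continuation $\cO_j(g)$; (ii) $C^1$-robustness of the gap $h^u>h^s$ and of the choice $a\in(h^s(g),h(g))$, so that Theorem \ref{MainTheoA} applies to $g$ with the same constant $K_a$; and (iii) the uniform Pesin-block argument via Pliss's lemma sketched in the paragraph following Theorem \ref{MainTheoA}, which produces stable Pesin manifolds of uniform size at a set of positive measure.

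The main obstacle is therefore not the semicontinuity step itself, which is automatic, but the robust version of Theorem \ref{MainTheoA}: one must verify that the Pliss/Pesin construction yields stable Pesin manifolds of a size that is uniform as $g$ varies in a $C^1$-neighborhood of $f$, so that the finite list of orbits $\cO_j(g)$ still captures every ergodic m.m.e.\ of $g$ and every such measure $\mu$ is still homoclinically related to one of the $\cO_j(g)$. This is delicate because the regularity is only $C^{1+}$ (the Pesin estimates depend on the H\"older exponent of $Df$, which is not $C^1$-continuous) and because ergodic m.m.e.\ of different $g_n$ can coalesce in the limit---the latter phenomenon being precisely the reason one only obtains an inequality in Theorem \ref{MainTheoB} and not equality.
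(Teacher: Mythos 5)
There is a genuine gap. Your reduction of Theorem \ref{MainTheoB} to the ``stability clause'' of the Main Theorem gains nothing, because that clause \emph{is} Theorem \ref{MainTheoB}; the paper proves the Main Theorem by combining the finiteness statement (Corollary \ref{finite-MME-forf}) with Theorem \ref{MainTheoB}, so everything hinges on the strategy you sketch for the stability clause, and that strategy does not give the stated bound. Applying Theorem \ref{MainTheoA} robustly to nearby $g$ (even granting, which you do not justify, that the orbits for $g$ are the hyperbolic continuations $\cO_j(g)$ and that the count is ``the same constant $K_a$'') would only yield $N_{max}(g)\leq K_a$. But $K_a$ counts homoclinic classes carrying \emph{some} measure of entropy $\geq a$, and this can strictly exceed $N_{max}(f)$: a class may have entropy above $a$ without carrying any m.m.e.\ of $f$. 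So the conclusion $\limsup_{g\to f}N_{max}(g)\leq N_{max}(f)$ does not follow from your outline.

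The missing idea is the compactness/limit argument that the paper uses (Lemma \ref{p(g)SimMuj}). One takes ergodic m.m.e.\ $\mu_n$ of $g_n\to f$, passes to a weak$^*$ limit $\nu$, and proves that $\nu$ is a measure of maximal entropy of $f$; this step needs three ingredients you never invoke: the identity $h_{\mu_n}(g_n)=h^u_{\mu_n}(g_n)$ for high-entropy measures (from Lemma \ref{mbe-implies-center-lyapunov-exponent<0} and \eqref{u-entropy-metric=entropy-metric}), the upper semicontinuity $\limsup_n h^u_{\mu_n}(g_n)\leq h^u_\nu(f)$ from \cite[Theorem A]{Yan21}, and the continuity of topological entropy on the neighborhood (Lemma \ref{h^u(g)>h^s(g)}). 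Since $f$ has finitely many ergodic m.m.e.\ $\nu_1,\dots,\nu_k$, the limit $\nu$ is a convex combination of them, and the uniform lower bound $\mu_n(\Lambda_\rho(g_n))\geq 2\beta$ (Corollary \ref{Hyperbolictime-uniform-measure}) together with Hausdorff convergence $\Lambda_\rho(g_{n_l})\to K\subset\Lambda_\rho(f)$ forces some $\nu_j$ to charge the limiting block; the uniform size and $C^1$-continuity of the invariant manifolds on $\Lambda_\rho$ (Remark \ref{manifoldsexistence}) then give $\mu_{n_l}\sim p_j(g_{n_l})$ with $j\in\{1,\dots,k\}$, i.e.\ indexed by the m.m.e.\ of $f$, not by $K_a$. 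Combined with the fact that each homoclinic class carries at most one m.m.e.\ (Proposition \ref{criterio-MME}), this yields $N_{max}(g_n)\leq k=N_{max}(f)$. Incidentally, your worry about Pesin estimates depending on the H\"older exponent is moot in the paper's construction: $\Lambda_\rho$ is defined by the uniform contraction $\|Df^n|_{E^c}\|\leq\rho^n$, so the invariant manifolds of uniform size come from uniform (Hirsch--Pugh--Shub type) theory and vary continuously with the diffeomorphism in the $C^1$ topology.
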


   {
   The meaning of this theorem lies in resolving a long-standing question within smooth dynamical systems. Although Bowen established the robustness of the number of ergodic measures of maximum entropy for Axiom A diffeomorphisms, the search for non-Axiom A examples remained an open challenge. 
   This theorem provides 
   a non-Axiom A system that maintains the property of finiteness of ergodic measures of maximal entropy under small perturbations.  In addition,  the quantity of such measures  of these perturbations are bounded  by the number of ergodic maximal  entropy measures of $f$.
  The proof of this theorem involves a refined analysis of the properties of unstable and stable entropy, as well as certain estimates related to a set of points exhibiting a stable manifold of uniform size.}

\subsection{Central Lyapunov exponent far from zero}
 
 Let $f:M\to M$ be a $C^{1+}$ partially hyperbolic diffeomorphism of a compact manifold $M$ with $TM=E^u \oplus E^c \oplus E^s $ and $\operatorname{dim}(E^c)=1$. Consider the sets 
$$\cM^+_f:=\{\mu \in \cM_f^e: \lambda_c(f,\mu)\geq 0\} \quad \mbox{ and }\quad \cM^-_f:=\{\mu \in \cM_f^e: \lambda_c(f,\mu)\leq 0\} $$
and let $$h^-(f):=\sup_{\mu \in \cM_f^-}\{h_\mu(f)\} \quad \mbox{ and } \quad h^+(f):=\sup_{\mu \in \cM_f^+}\{h_\mu(f)\}.$$
In \cite{CT21}, it was proved that a $C^{1+}$ partially hyperbolic diffeomorphism of a compact manifold $M$ with one-dimensional center, dense stable and unstable foliations, and $h^+(f)\neq h^-(f)$ has a unique measure of maximal entropy. We prove the following theorem.

\begin{maintheorem}\label{MainCentralLyapunovexponentfarfrom zero}
Let $f:M\to M$ be a $C^{1+}$ partially hyperbolic diffeomorphism of a compact manifold $M$ with $TM=E^u \oplus E^c \oplus E^s $ and $\operatorname{dim}(E^c)=1$. If $h^+(f)\neq h^-(f)$ then $f$ has only a finite number of ergodic maximal entropy.
\end{maintheorem}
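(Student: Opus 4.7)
By the symmetry $h^{\pm}(f^{-1}) = h^{\mp}(f)$, we may assume WLOG that $h^+(f) > h^-(f)$. The plan is to reduce to the Main Theorem by first comparing $h^\pm$ with $h^{u/s}$. The key inequalities are $h^-(f) \leq h^u(f)$ and $h^+(f) \leq h^s(f)$. For an ergodic $\mu \in \cM^-_f$ one has $\lambda_c(f,\mu) \leq 0$, so all positive Lyapunov exponents of $\mu$ live in $E^u$; in the one-dimensional center setting the Ledrappier--Young formula identifies $h_\mu(f) = h^u_\mu(f) \leq h^u(f)$, which upon taking the supremum gives the first bound. The dual bound $h^+(f) \leq h^s(f)$ follows from the first by applying it to $f^{-1}$ (noting $\cM^+_f = \cM^-_{f^{-1}}$ and $h^u(f^{-1}) = h^s(f)$). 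Combined with the variational principle $h(f) = \max(h^+(f),h^-(f)) = h^+(f)$ and the general bound $h(f) \geq h^u(f)$, these force $h^s(f) = h^+(f) = h(f) \geq h^u(f)$.

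Now split into two cases. If $h^s(f) > h^u(f)$ strictly, then $h^u(f^{-1}) = h^s(f) > h^u(f) = h^s(f^{-1})$, so the Main Theorem applies to $f^{-1}$ and yields finitely many ergodic measures of maximal entropy for $f^{-1}$, hence also for $f$. If instead $h^s(f) = h^u(f)$, the Main Theorem's hypothesis fails and its proof must be adapted directly. In that degenerate case every ergodic MME $\mu$ satisfies $h_\mu(f) = h^+(f) > h^-(f)$, which forces $\mu \notin \cM^-_f$ and hence $\lambda_c(f,\mu) > 0$ strictly, with uniform positivity over measures whose entropy is within a fixed margin of $h(f)$. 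One then re-runs the Pesin/Pliss argument of the Main Theorem with the roles of $E^s$ and $E^u$ interchanged: the uniformly contracting $E^s$ automatically supplies a strong stable manifold $W^{ss}$ of uniform size, while Pliss's Lemma applied to the forward center expansion produces, on a set of positive $\mu$-measure, unstable Pesin manifolds of uniform size tangent to $E^u \oplus E^c$. Transverse intersections between the latter and $W^{ss}$ at nearby points then yield homoclinic relations, so finitely many homoclinic classes of $s$-index $\dim E^s$ cover all high-entropy ergodic measures, and Corollary 2.14 of \cite{BCS22} finishes the proof.

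The main obstacle is the degenerate case $h^s(f) = h^u(f)$: the statement of the Main Theorem is silent there, and its proof has to be transcribed with the sign of the center Lyapunov exponent (and the corresponding Pesin/uniform roles of $E^u$ versus $E^s$) reversed. The most delicate step will be a uniform lower bound for $\lambda_c$ on measures whose entropy approaches $h(f)$, which is needed to obtain uniform control on the sizes of the unstable Pesin manifolds --- the symmetric counterpart of the stable-manifold estimates underlying Theorem \ref{MainTheoA}.
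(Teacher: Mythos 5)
Your strategy is, in substance, the paper's: after fixing the sign of the center exponent, show that every ergodic measure of large entropy has central Lyapunov exponent uniformly bounded away from zero, run the Pliss argument to give the set $\Lambda_\rho$ of points with uniform-size Pesin manifolds positive measure, deduce that only finitely many homoclinic classes of hyperbolic measures can carry entropy close to $h(f)$, and finish with Propositions \ref{HoclinicClassRelation} and \ref{criterio-MME}. Two remarks on the packaging. Your preliminary bounds $h^-(f)\leq h^u(f)$, $h^+(f)\leq h^s(f)$ and the ensuing case split ($h^s(f)>h^u(f)$ versus $h^s(f)=h^u(f)$) are correct but unnecessary: the paper never compares $h^u$ and $h^s$ in this proof. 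It simply normalizes to $h^-(f)>h^+(f)$ (your normalization $h^+(f)>h^-(f)$ is the same statement for $f^{-1}$), and then your ``roles reversed'' run of the Theorem \ref{MainTheoA} machinery in your Case 2 is literally the paper's argument applied to $f^{-1}$; that argument covers your Case 1 as well, so the detour through the Main Theorem buys nothing.

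The only genuine gap is the step you flag but do not prove: the \emph{uniform} positivity of $\lambda_c(f,\mu)$ over measures whose entropy is within a fixed margin of $h(f)$. This does not follow from the pointwise observation that an m.m.e.\ lies outside $\cM^-_f$; a priori a sequence of ergodic measures with entropy tending to $h(f)$ could have center exponents tending to $0^+$, and without a uniform bound you cannot choose one $\rho$ (hence one size $\delta_r,\delta_l$ for the Pesin manifolds) working for all high-entropy measures simultaneously, which is what the pigeonhole/transversality step requires. In the paper this is exactly Proposition \ref{farfromzero}, whose sufficiency direction is imported from \cite[Equation (9.2)]{CT21}; the remaining work is then Lemma \ref{HyerbolictimeforC} plus a verbatim repetition of the proof of Theorem \ref{MainTheoA}. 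If you want it self-contained: suppose ergodic $\mu_n$ satisfy $h_{\mu_n}(f)\to h(f)$ and $\lambda_c(f,\mu_n)\to 0$, and let $\mu$ be a weak* limit. Since $x\mapsto \log\|Df(x)_{|E^c_x}\|$ is continuous, $\lambda_c(f,\mu)=\lim\lambda_c(f,\mu_n)=0$. On the other hand, $\dime(E^c)=1$ makes $f$ $h$-expansive, so the entropy map is upper semicontinuous and $h_\mu(f)=h(f)$; by ergodic decomposition almost every ergodic component of $\mu$ is then an m.m.e., hence (in your normalization $h^+(f)>h^-(f)$) has strictly positive center exponent, which forces $\lambda_c(f,\mu)>0$, a contradiction. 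With that bound supplied, the rest of your sketch (uniform $W^{ss}$, Pliss along the center for $f^{-1}$, transversal intersections, finitely many classes, at most one m.m.e.\ per class via \cite{BCS22}) is correct and matches the paper.
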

The condition $h^-(f)> h^+(f)$ is equivalent to requiring that every ergodic measure with large entropy has a central Lyapunov exponent far from zero, as stated in Proposition \ref{farfromzero}.

When $f$ satisfies the conditions of Theorem \ref{MainTheoA}, we can conclude that $h^-(f)>h^+(f)$. However, it is not clear whether $h^-(f)>h^+(f)$ implies $h^u(f)>h^s(f)$. Therefore, we cannot obtain information about diffeomorphisms close to $f$ when $f$ satisfies $h^-(f)> h^+(f)$.

%%%%%%%%%%%%%%%%%%%%%%%%%%%%%%%%%%%%%
%%%%%%%%%%%%%%%%%%%%%%%%%%%%%%%%%%%%%%
    \subsection{Equilibrium states}
     Let $\phi:M\to \mathbb{R}$ be a continuous function. An $f-$invariant probability measure $\mu$ is  an {\em equilibrium state} for the potential $\phi$, if $h(f,\mu)+\int \phi d\mu=P(f,\phi)$ where

$$P(f,\phi):=\sup_{\mu\in \cM_f}\{h_\mu(f)+\int \phi d\mu\}.$$ 

The next theorem gives information about equilibrium states for potentials with a certain regularity.

\begin{maintheorem}\label{MainTheoC}
Let $f:M\to M$ be a $C^{1+}$ partially hyperbolic diffeomorphism of a compact manifold $M$ with $TM=E^u \oplus E^c \oplus E^s $, ${\dime}(E^c)=1$ and $\phi$ a  H\"older continuous potential. If green $h^u(f)>\sup \phi\geq \inf \phi >h^s(f) $, then for any $P(f,\phi)\geq a>h^s(f)+\sup \phi$ there exists  an integer $K_a>0$ and $K_a$ hyperbolic orbits  $\mathcal{O}_1,\cdots \mathcal{O}_{K_a}$ with $s-$index ${\dime}(E^s)+1$  such that $P(f_{|HC({\cO}_i)},\phi_{|HC({\cO}_i)})\geq  a$ for every $i=1,\cdots K_a$ and  any  $\mu\in \cM_f^e$ with $P_\mu(f,\phi)\geq a$ must satisfy  $\mu \sim \mathcal{O}_i$ for some $1\leq j\leq K_a$.  
\end{maintheorem}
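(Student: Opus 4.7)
The plan is to reduce Theorem~\ref{MainTheoC} to Theorem~\ref{MainTheoA} by converting the pressure hypothesis into an entropy hypothesis, then pruning the orbits provided by Theorem~\ref{MainTheoA} to those witnessed by a high-pressure measure. Fix $a$ with $P(f,\phi)\geq a>h^s(f)+\sup\phi$ and set
\[
b \;:=\; \tfrac{1}{2}\bigl(h^s(f)+a-\sup\phi\bigr),
\]
so that $h^s(f)<b<a-\sup\phi$. For every ergodic $\mu$ with $P_\mu(f,\phi)\geq a$ one has
\[
h_\mu(f) \;=\; P_\mu(f,\phi)-\int \phi\,d\mu \;\geq\; a-\sup\phi \;>\; b.
\]
Applying Theorem~\ref{MainTheoA} with parameter $b$ provides an integer $K_b$ and hyperbolic periodic orbits $\cO_1',\dots,\cO_{K_b}'$ of s-index $\dime(E^s)+1$ such that every $\nu\in\cM_f^e$ with $h_\nu(f)>b$ is homoclinically related to some $\cO_j'$. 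Consequently each ergodic $\mu$ with $P_\mu(f,\phi)\geq a$ satisfies $\mu\sim\cO_j'$ for some $j$.

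Next I would prune the list, keeping only those $\cO_j'$ for which some ergodic $\mu_j$ with $P_{\mu_j}(f,\phi)\geq a$ satisfies $\mu_j\sim\cO_j'$; relabel the survivors as $\cO_1,\dots,\cO_{K_a}$, so every relevant $\mu$ is homoclinically related to one of them. To obtain the pressure bound on each homoclinic class, fix $i$ and the corresponding witness $\mu_i$. Since $\mu_i$ is hyperbolic and $\mu_i\sim\cO_i$, Katok's closing lemma applied inside a Pesin block of $\mu_i$ produces periodic orbits in $\Per_h(f)$ that are homoclinically related to $\cO_i$ and whose supports accumulate on $\supp(\mu_i)$. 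By closedness of $\HC(\cO_i)$ this gives $\supp(\mu_i)\subset\HC(\cO_i)$, so $\mu_i$ is an invariant probability measure on the compact set $\HC(\cO_i)$. The variational principle applied on $\HC(\cO_i)$ then yields
\[
P\bigl(f_{|\HC(\cO_i)},\,\phi_{|\HC(\cO_i)}\bigr) \;\geq\; h_{\mu_i}(f)+\int\phi\,d\mu_i \;=\; P_{\mu_i}(f,\phi) \;\geq\; a.
\]

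The main obstacle is the passage from the measure-theoretic relation $\mu_i\sim\cO_i$ to the topological inclusion $\supp(\mu_i)\subset\HC(\cO_i)$; this is where Katok's closing lemma, the Pesin block machinery, and Definition~\ref{RelationMuO} must be carefully combined so that the approximating periodic orbits actually lie in the homoclinic class of $\cO_i$ rather than in a larger class. A secondary technical point worth remarking is that the combined assumption $h^u(f)>\sup\phi$ and $\inf\phi>h^s(f)$ is precisely what ensures the admissible range $(h^s(f)+\sup\phi,\,P(f,\phi)]$ of $a$ is nonempty, since any ergodic measure with entropy close to $h^u(f)$ witnesses $P(f,\phi)\geq h^u(f)+\inf\phi>h^s(f)+\sup\phi$, so the statement is nonvacuous.
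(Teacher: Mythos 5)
Your proposal is correct and follows essentially the same route as the paper: convert the pressure bound $P_\mu(f,\phi)\geq a$ into the entropy bound $h_\mu(f)\geq a-\sup\phi>h^s(f)$, apply Theorem \ref{MainTheoA} at an entropy level below $a-\sup\phi$, and then prune the resulting orbits to those witnessed by a high-pressure measure, with the bound $P(f_{|\HC(\cO_i)},\phi_{|\HC(\cO_i)})\geq a$ coming from the variational principle on $\HC(\cO_i)$. The only divergence is your appeal to Katok's closing lemma to get $\supp(\mu_i)\subset \HC(\cO_i)$; this is unnecessary, since Proposition \ref{HoclinicClassRelation} together with the definition of $HC_T(\mu_i)$ (and the fact that orbits homoclinically related to the same measure have equal homoclinic classes) already gives $\supp(\mu_i)\subset HC_T(\mu_i)=\HC(\cO_i)$, which is how the paper handles it.
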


 As a consequence of Theorem \ref{MainTheoC},   we get that $C^{1+}$ diffeomorphisms in a $C^1$ open neighborhood of $f$ also have a finite number of equilibrium states, see Corollary \ref{finiteEE}.

  Let $M_{\text{max}}(f, \varphi)$ be the number of ergodic equilibrium states of a diffeomorphism $f$ and a potential $\varphi$. 
  Our last result establishes  the following upper semicontinuity property for 
  $M_{\text{max}}(\cdot,\varphi)$.

\begin{maintheorem}\label{MainTheoD}
    Let $f:M\to M$ be a $C^{1+}$ partially hyperbolic diffeomorphism of a compact manifold $M$ with $TM=E^u \oplus E^c \oplus E^s $ and ${\dime}(E^c)=1$, and $\phi$ a H\"older continuous potential.
    If $h^u(f)>\sup \phi\geq \inf \phi >h^s(f) $, then the function $M_{max}(.,\phi) : {\Diff}^{1+}(M) \to {\bN} \cup \{\infty\}$ varies upper semi-continuously in the $C^1$ topology, that is, $\limsup_{g \to f} M_{max}(g) \leq M_{max}(f)$.
\end{maintheorem}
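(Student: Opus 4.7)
The strategy parallels the proof of Main Theorem \ref{MainTheoB}, replacing topological entropy by topological pressure $P(\cdot,\phi)$ and measures of maximal entropy by ergodic equilibrium states of $\phi$. First, the hypothesis $h^u(f)>\sup\phi\geq\inf\phi>h^s(f)$ is $C^1$-open: both $h^u$ and $h^s$ enjoy the (semi)continuity properties on $C^{1+}$ partially hyperbolic diffeomorphisms with one-dimensional center already exploited in the proof of Main Theorem \ref{MainTheoB}, while $\sup\phi$ and $\inf\phi$ do not depend on $g$. Fix a $C^1$-neighborhood $U$ of $f$ on which the hypothesis persists; Main Theorem \ref{MainTheoC} then applies to every $g\in U$, yielding finitely many hyperbolic saddle orbits $\mathcal{O}_1(g),\dots,\mathcal{O}_{K(g)}(g)$ such that every ergodic equilibrium state of $(g,\phi)$ is homoclinically related to one of them. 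Combined with Corollary \ref{finiteEE}, which provides uniqueness of the ergodic equilibrium state within each homoclinic class in this hyperbolic setting, one obtains $M_{max}(g,\phi)\leq K(g)<\infty$ for every $g\in U$.

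I then argue by contradiction. Assume there is a sequence $g_n\to f$ in $C^1$ with $M_{max}(g_n,\phi)\geq N+1$, where $N:=M_{max}(f,\phi)$, and pick distinct ergodic equilibrium states $\mu_{n,1},\dots,\mu_{n,N+1}$ for $(g_n,\phi)$. Passing to a subsequence, each $\mu_{n,k}$ converges weak$^*$ to an $f$-invariant measure $\mu_k^\infty$. By upper semicontinuity of metric entropy, weak$^*$-continuity of $\mu\mapsto\int\phi\,d\mu$, and the continuity of pressure $P(g_n,\phi)\to P(f,\phi)$, each $\mu_k^\infty$ is an equilibrium state of $(f,\phi)$. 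Since $f$ has only $N$ ergodic equilibrium states, the pigeonhole principle applied to the ergodic decompositions of $\mu_1^\infty,\dots,\mu_{N+1}^\infty$ produces two indices $i\neq j$ and a common ergodic equilibrium state $\nu$ of $(f,\phi)$ appearing with positive weight in both $\mu_i^\infty$ and $\mu_j^\infty$.

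The key and main obstacle is to promote this weak$^*$ proximity into a $g_n$-homoclinic relation $\mu_{n,i}\sim\mu_{n,j}$ for all large $n$. I adapt the uniform-size Pesin-block construction developed in the proof of Main Theorem \ref{MainTheoA} to the present setting. Using Pliss's lemma and the uniform lower bound $h_{\mu_{n,k}}(g_n)\geq P(g_n,\phi)-\sup\phi>h^s(g_n)$, together with the fact that the central Lyapunov exponents of $\mu_{n,k}$ are uniformly bounded away from zero (Proposition \ref{farfromzero}), one constructs a Pesin block $\Lambda_n\subset M$ of uniform stable and unstable manifold size with $\mu_{n,k}(\Lambda_n)\geq\rho_0>0$ uniformly in $n$ and in $k\in\{i,j\}$. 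Weak$^*$ convergence of $\mu_{n,i}$ and $\mu_{n,j}$ to measures sharing the ergodic component $\nu$, combined with this uniform mass bound, forces for $n$ large the existence of $x_{n,i}\in\supp(\mu_{n,i})\cap\Lambda_n$ and $x_{n,j}\in\supp(\mu_{n,j})\cap\Lambda_n$ within the uniform Pesin scale; their stable and unstable Pesin manifolds then intersect transversally, yielding $\mu_{n,i}\sim\mu_{n,j}$. Uniqueness of the ergodic equilibrium state inside a single homoclinic class then forces $\mu_{n,i}=\mu_{n,j}$, contradicting the assumed distinctness. The principal technical subtlety is to ensure that every quantitative constant in the Pesin-block construction (block size, Pliss threshold, angle of transversal intersection) depends only on $\|g_n-f\|_{C^1}$ and on the H\"older norm of $\phi$, so that the estimates are truly uniform in $n$.
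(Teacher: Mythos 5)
The main gap is in the step you yourself flag as the key obstacle: from ``$\mu_{n,i}$ and $\mu_{n,j}$ converge weak$^*$ to measures sharing an ergodic component $\nu$, and both give mass $\geq\rho_0$ to the uniform Pesin block $\Lambda_n$'' you conclude that for large $n$ there are points $x_{n,i}\in\supp(\mu_{n,i})\cap\Lambda_n$ and $x_{n,j}\in\supp(\mu_{n,j})\cap\Lambda_n$ within the uniform local-product scale. This does not follow. Weak$^*$ convergence only tells you where the \emph{total} mass accumulates; the portion of $\mu_{n,i}$ sitting inside $\Lambda_n$ may accumulate near the support of an ergodic component of $\mu_i^\infty$ different from $\nu$ (the mass of $\mu_{n,i}$ near $\supp\nu$ could lie entirely outside $\Lambda_\rho(g_n)$), and even if both block-restricted measures accumulate on $\supp\nu$, they may charge portions of $\supp\nu$ at distance larger than the Pesin scale $\delta_l$, so no transverse intersection of uniform-size manifolds is produced. (A secondary point: to get $\mu_{n,i}\sim\mu_{n,j}$ per Definition \ref{def-HR-measures} you need positive-measure subsets of the blocks in nearby balls, not merely two points of the supports; this is fixable, but the closeness itself is not.) Repairing this essentially forces you into the paper's mechanism: choose the ergodic component \emph{after} passing the block-restricted measures to the limit (Remark \ref{HausdorffTopology} plus Corollary \ref{measures-uniform-pressure} guarantee some $\nu_j$ charges the Hausdorff limit of the blocks), then use that transverse homoclinic points of the associated orbit $\mathcal{O}_j$ are dense in $\supp\nu_j\subset HC(\mathcal{O}_j)$ (Proposition \ref{HoclinicClassRelation}, Definition \ref{Characterization-Homoclinic-class}), and the continuity of the continued orbits $p_j(g_n)$ and of the invariant manifolds, to pin \emph{each} equilibrium state of $(g_n,\phi)$ to one of the finitely many $p_j(g_n)$ (Lemma \ref{p(g)SimMujP}); Proposition \ref{criterio-MME} then bounds $M_{max}(g_n,\phi)$ by $M_{max}(f,\phi)$, with no need for the pigeonhole on pairs of states of $g_n$. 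The density of homoclinic points is exactly what substitutes for the unjustified ``points of the two blocks are $\delta_l$-close''.

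A smaller gloss: you invoke ``upper semicontinuity of metric entropy'' to show the limit $\mu_k^\infty$ is an equilibrium state, but entropy is not upper semicontinuous when the diffeomorphism varies along with the measure. The paper's route is to note that, by Corollary \ref{mbp-implies-center-lyapunov-exponent<0} and \eqref{u-entropy-metric=entropy-metric}, the equilibrium states of $g_n$ satisfy $h_{\mu_n}(g_n)=h^u_{\mu_n}(g_n)$, to apply the joint semicontinuity of unstable entropy from \cite{Yan21}, and to use $h^u_\nu(f)\leq h_\nu(f)$ together with the continuity of $g\mapsto P(g,\phi)$ from Lemma \ref{h^u(g)>h^s(g)}. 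With that substitution your limit step is fine, but the promotion step described above still needs the periodic-orbit argument.
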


%%%%%%%%%%%%%%%%%%%%%%%%%%%%%%%%%%%%%
%%%%%%%%%%%%%%%%%%%%%%%%%%%%%%%%%%%%%%
\subsection{Sketch of the proof of the Main Theorem}
The proof establishing the finiteness of the number of ergodic measures of maximal entropy  in the Main Theorem is structured into three key steps:

%\st{We will consider $h^u(f)>h^s(f)$, for the case $h^s(f)>h^u(f)$ it is enough to consider $f^{-1}$, see Remark \ref{h^u(f)=h^s(f^{-1})}.}
    
\begin{itemize}

    \item[(1)] Utilizing the assumption $h^u(f)>h^s(f)$, we established that every ergodic measure with substantial entropy possesses a negative central Lyapunov exponent that remains uniformly away from zero. That is, if $h_\mu(f)>h^u(f)$ then 
    $\lambda_c(f,\mu)\leq h_\mu(f)-h^u(f),$
    where $\lambda_c(f,\mu)$ denotes the central Lyapunov exponent with respect to $\mu$. 
    
    \item[(2)] {Consider $0<\rho<1$ and  the set} 
    $$\Lambda_\rho(f):=\{x\in M: \forall n\geq 1 \mbox{ it holds } \|Df^n(x)_{|E^c}\|\leq \rho^n \}.$$    
 
{For every $\rho$ big enough, the set is not empty  and every $x \in \Lambda_\rho(f)$ has  invariant Pesin manifolds of uniform size. 
  In particular, for two points in $\Lambda_\rho(f)$ sufficiently close, their invariant manifolds have transversal intersections.}

    \item[3)] Utilizing the classical Pliss Lemma, 
  we prove that for certain values of $\rho$ (depending on $\lambda_c$), $\Lambda_\rho(f)$ has large measure for all invariant measure with large enough entropy, the set $\Lambda_\rho(f)$ has uniform positive measure with respect to this measure depending on how big is the entropy.  
\end{itemize}

 Items (1), (2) and (3) imply that there is only a finite number of homoclinic classes in the set of hyperbolic measures containing measures of maximal entropy (we are referencing  the equivalence relation described in Definition \ref{def-HR-measures}). 
 By Theorem \ref{criterio-MME}, each of these equivalence  classes possesses at most one measure of maximal entropy. Therefore, the number of maximal entropy ergodic measures for $f$ is finite.

To prove that diffeomorphisms close enough to $f$ have at most the same number of ergodic m.m.e.\ of $f$ we proceed as follows: 

\begin{itemize}
    \item[(4)]Thanks to Lemma \ref{h^u(g)>h^s(g)}  the property $h^u(f)>h^s(f)$ is an open property in $C^{1+}$ diffeomorphisms on $M$, that is, for any $h^u(f) > a > b > h^s(f)$, there exists a $C^1$ neighborhood of $f$ such that every $C^{1+}$ diffeomorphism has unstable entropy greater than $a$ and stable entropy less than $b$.

    \item[(5)] Item (4) allows us to demonstrate that the properties mentioned in Items (2) and (3) are open. Specifically, there exists a $\beta > 0$ such that for every $C^{1+}$ diffeomorphism $g$ within a certain neighborhood of $f$, the set $\Lambda_\rho(g)$ has a measure greater than $\beta$ for each ergodic measure of maximal entropy of $g$. 
    
    \item[(6)] To conclude the proof, we argue by contradiction. 
    Assuming that there is a sequence of $C^{1+}$ diffeomorphisms 
    $g_n$    having a  number of ergodic measures of maximal entropy greater than
    the number of ergodic measures of maximal entropy of $f$ which converges to $f$.
    For each $g_n$ we take an ergodic measure $\nu_n$ of maximal entropy. There is no loss of generality assuming that $\nu_n$ converges to an $f$-invariant measure $\nu$.
    
\item[(7)] Using the consequences from our hypotheses  $h^u(f)> h^s(f)$, 
we prove that $\nu$ is a measure of maximal entropy of $f$.

\item[(8)]
We further get that $\nu$ has an ergodic component that we associate  with $\nu_n$.
  In this way   we  establish an injective correspondence between the ergodic measure of maximal entropy of $g_n$ into the ergodic measure of maximal entropy of $f$. 
  Thus the number of ergodic maximal entropy measures of $g_n$ is bounded by
  the number of ergodic maximal entropy measures of $f$, contradicting the hypotheses.
    \end{itemize}

  \subsection*{Organization of the paper} 
 In Section \ref{s-preliminar} we recall the main previous results 
that we will use. 
In section \ref{s-consequences-hu>hs} we explore the consequences of the hypotheses $h^u(f) > h^s(f)$. The section \ref{Hyperbolic-time} is dedicated to proving that the set of points with Pesin manifolds of uniform size  has a uniform positive size for measures with large entropy. In sections \ref{Proof-A-B}  to \ref{Proof-C-D} we prove the results of our work.
%%%%%%%%%%%%%%%%%%%%%%%%%%%%%%%%%%%%%
%%%%%%%%%%%%%%%%%%%%%%%%%%%%%%%%%%%%%%
\section{Preliminaries}\label{s-preliminar}

\subsection{Hyperbolic sets and homoclinic class}\label{subsectionHomoclinicClass}
 We will review the definition of homoclinic classes by Newhouse \cite{New72} and some of their properties.

Let $f:M \to M$ be a diffeomorphism on a compact manifold $M$, and let $\Lambda \subset M$ be a compact and invariant set. We say that $\Lambda$ is a hyperbolic set for $f$ if there exists a splitting of the tangent bundle $T\Lambda$ into two bundles $E^s$ and $E^u$ such that for all $x \in \Lambda$,  $Df_xE^s_{x} = E^s_{f(x)}$
and  $Df_xE^u_{x}=$  
 $E^u_{f(x)}$ and there exist a constant $C>0$ and $0 < \lambda < 1$ such that for all $x \in \Lambda$ it holds
$$\left\| Df^n_xv \right\| \leq C\lambda^n \left\| v \right\| \mbox{ for all } v \in E^s_x \mbox{ and } n\geq 1$$
and 
$$\left\| Df^{-n}_xv \right\| \leq C \lambda^{n} \left\| v \right\| \mbox{for all } v \in E^u_x \mbox{ and } n\geq 1.$$ 
%\textcolor{blue}{coloquei o ponto acima}
It is well known that if $\Lambda$ is a hyperbolic set, then for any $x \in \Lambda$, the following sets are $C^1$-immersed submanifolds \cite{HPS70}.
$$
\begin{array}{l}
W^s(x)=\left\{y \in M: \lim_{n\to \infty}d\left(f^n(x), f^n(y)\right)=0\right\}, \\
W^u(x)=\left\{y \in M: \lim_{n\to \infty} d\left(f^{-n}(x), f^{-n}(y)\right)= 0 \right\} .
\end{array}
$$
The sets $W^{s/u}(x)$ are called the stable and unstable manifolds of $x$ respectively, and for $\varepsilon$ small,  we denote by $W^{s/u}_{\varepsilon}$ the connected component of $W^{s/u}\cap B_\varepsilon(x)$ that contains $x$.
When a periodic point is  hyperbolic   we say that  its orbit is a hyperbolic periodic orbit. We denote the set of all hyperbolic periodic orbits of $f$ by $\operatorname{Per}_h(f)$. The transverse
intersection of $C^r$-submanifolds $U, V \subset M$ is
$$U\pitchfork V=\{x\in U\cap V: T_xU \oplus T_xV=T_x M\}.$$

Given two orbits ${\cO}_1, {\cO}_2 \in {\Per}_h(f)$ we say that they are homoclinically related if
$$
W^s\left({\cO}_1\right) \pitchfork W^u\left({\cO}_2\right) \neq \emptyset \text { and } W^s\left({\cO}_2\right) \pitchfork W^u\left({\cO}_1\right) \neq \emptyset \text {. }
$$
If two orbits, ${\cO}_1$ and ${\cO}_2$, are homoclinically related we write ${\cO}_1 \sim {\cO}_2$. The pair $(\Per_h(f),\sim)$ defines an  equivalence relation. 

\begin{defi}\label{Characterization-Homoclinic-class}
    The homoclinic class of a hyperbolic periodic orbit ${\cO}$ is 
$$
{\HC}({\cO}):=\overline{\left\{{\cO}^{\prime} \in {\Per}_h(f): {\cO}^{\prime} \sim {\cO}\right\}} \doteq
\overline{W^s\left({\cO}\right) \pitchfork W^u\left({\cO}\right)}{.}
$$
\end{defi}

The class $\HC(\cO)$ is called trivial when it coincides with $\cO$.
See \cite{New72} for the proof of $\doteq$. The compact set $\HC(\cO)$ is transitive \cite{New72}. In general, different homoclinic classes may have non-empty intersections.

A hyperbolic set $\Lambda$ is called locally maximal if it possesses an open neighborhood $V$ satisfying $\Lambda = \bigcap_{n\in {\bZ}} f^n(V)$. A fundamental property of hyperbolic sets is the existence of a $\delta > 0$ such that, for any $x, y \in \Lambda$ with $d(x, y) < \delta$, the intersection $W_\varepsilon^s(x) \pitchfork W_\varepsilon^u(y)$ is a unique point $\ z \in M$. When this point $z$ belongs to $\Lambda$, we refer to $\Lambda$ as possessing a local product structure. 
It is known that a hyperbolic set $\Lambda$ is locally maximal if and only if it presents this property of local product structure \cite{Shub13}. 

A basic set $\Lambda$ for $f$ is an $f$-invariant compact set which is transitive, hyperbolic, and locally maximal. A totally disconnected and infinite basic set is referred to as a horseshoe. 
It is known that every homoclinic class can be approximated by horseshoes, see \cite{KH95}.

%%%%%%%%%%%%%%%%%%%%%%%%%%%%%%%%%%%%%
%%%%%%%%%%%%%%%%%%%%%%%%%%%%%%%%%%%%%%
\subsection{Hyperbolic measures and homoclinic classes}\label{subsectionHyperbolicMeasures} We will recall some of the results presented in \cite{BCS22}.
Let $x$ be a point of $M$. For a nonzero vector $v \in T_xM$, the Lyapunov exponent of $f$ $x$ on direction  $v$ is defined by
$$
\chi(x,v): = \lim_{{n \to +\infty}} \frac{1}{n} \log \left\| Df^n(v) \right\|
$$
when the limit exists.

Let $\cM_f^e$ be the set of  ergodic measures  of $f$. If $\mu \in \cM_f^e$, the Oseledets theorem establishes that for almost every $x \in M$ with respect to $\mu$, the Lyapunov exponent of every nonzero vector $v \in T_xM$ exist \cite{Ose68}.

\begin{thm}[Oseledets]
 Let $(f, \mu)$ be as described above. Then for $\mu$ almost every point, there are real numbers $\chi_1(f,\mu) < \chi_2(f,\mu) < \cdots < \chi_k(f,\mu)$, and a splitting
$$
T_xM = E_{x,1} \oplus E_{x,2} \oplus \cdots \oplus E_{x,k}
$$
satisfying:
\begin{enumerate}
    \item $Df(E_{x,i}) = E_{f(x),i}$ for $i = 1, \ldots, k$.
    \item  For all nonzero $v \in E_{x,i}$,
    $$
    \chi(x,v) = \lim_{{n \to \pm\infty}} \frac{1}{n} \log \left\| Df^n_v \right\| = \chi_i(f,\mu).
    $$
\end{enumerate}

\end{thm}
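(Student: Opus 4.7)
The statement is the classical multiplicative ergodic theorem of Oseledets, so I would follow Raghunathan's approach via Kingman's subadditive ergodic theorem rather than attempt a wholly new argument. The plan is to first extract the Lyapunov spectrum as a set of numbers, then build the filtration by Lyapunov exponents, and finally refine the filtration into a direct sum decomposition by playing forward and backward iterates against each other.

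First, for each $1 \le k \le \dime M$ I would consider the subadditive cocycle
\[
\varphi_k(n,x) \;=\; \log \bigl\| \wedge^k Df^n_x \bigr\|,
\]
where $\wedge^k Df^n_x$ denotes the induced map on the $k$-th exterior power of $T_xM$. Subadditivity follows from the chain rule and the submultiplicativity of the norm on $\wedge^k$. Integrability of $\varphi_k(1,\cdot)$ is automatic since $M$ is compact and $f$ is $C^1$. Kingman's subadditive ergodic theorem then yields a $\mu$-a.e.\ finite limit
\[
\Phi_k(x) \;=\; \lim_{n\to\infty} \tfrac{1}{n}\, \varphi_k(n,x),
\]
which is $f$-invariant, hence $\mu$-a.s.\ constant by ergodicity. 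The numbers $\Phi_k - \Phi_{k-1}$ (with $\Phi_0=0$) are the Lyapunov exponents counted with multiplicity, and grouping equal values yields the distinct values $\chi_1(f,\mu) < \cdots < \chi_k(f,\mu)$ together with their multiplicities $\dime E_{x,i}$.

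Next I would construct the \emph{Lyapunov filtration} on the full-measure invariant set where all the above limits exist. Define
\[
V_i(x) \;=\; \bigl\{\, v \in T_xM \;:\; \limsup_{n\to\infty} \tfrac{1}{n}\log\|Df^n_x v\| \le \chi_i(f,\mu) \,\bigr\} \cup \{0\},
\]
and check that each $V_i(x)$ is a linear subspace of the correct dimension by comparing exterior-power growth rates (here one uses a standard lemma: if $v_1,\dots,v_j$ are linearly independent and each has exponent $\le \chi$, then $\|v_1 \wedge \cdots \wedge v_j\|$ grows at rate $\le j\chi$, which combined with $\Phi_k$ forces the dimensions). Invariance $Df(V_i(x)) = V_i(f(x))$ and the nesting $V_1 \subset V_2 \subset \cdots \subset V_k = T_xM$ are automatic from the definition.

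Finally, to promote the filtration to a splitting I would apply the same construction to $f^{-1}$, producing a decreasing filtration $\{V^-_i\}$ whose levels have complementary dimensions to those of $\{V_i\}$. The Oseledets subspace is then defined as
\[
E_{x,i} \;=\; V_i(x) \cap V^-_i(x),
\]
and a standard cross-estimate using both the forward and backward growth bounds shows $T_xM = \bigoplus_i E_{x,i}$ with the desired \emph{two-sided} exponential growth on each $E_{x,i}$. Measurability follows because each $V_i$ and $V_i^-$ is a measurable family of subspaces (they are level sets of measurable functions into the Grassmannian). The step I expect to be the most delicate is the dimension count for the $V_i$ and the verification that $V_i \cap V^-_i$ has the right dimension; this is the heart of Oseledets' theorem and requires the precise interplay between the exterior-power cocycles $\Phi_k$ and the $\limsup$ definition of $V_i$, together with the fact that once the $\limsup$ is $\le \chi_i$ on each summand, the two-sided limit must actually exist and equal the corresponding exponent.
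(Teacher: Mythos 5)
This statement is not proved in the paper at all: it is the classical multiplicative ergodic theorem, quoted as background and attributed to Oseledets, so there is no internal argument to compare yours against. Your outline is the standard Raghunathan--Kingman route and is correct as a strategy: subadditivity and integrability of the exterior-power cocycles are exactly as you say, Kingman plus ergodicity gives the constants $\Phi_k$, and the exponents with multiplicity are the increments $\Phi_k-\Phi_{k-1}$ (after reordering, since $\Phi_k$ records the sum of the $k$ largest exponents). What you have written, however, is a blueprint rather than a proof: the three places you wave at are precisely where the content of the theorem lives. Concretely, (i) the dimension count for $V_i(x)$ is not automatic from the wedge-growth lemma alone; one needs the upper and matching lower bounds on $\frac1n\log\|v_1\wedge\cdots\wedge v_j\|$ coming from $\Phi_j$ applied along a basis adapted to the filtration, (ii) the backward filtration requires knowing that the Lyapunov spectrum of $f^{-1}$ with respect to $\mu$ is the negative of that of $f$ with the same multiplicities (again via Kingman applied to $\wedge^k Df^{-n}$ and an invariance argument), and only then does the complementarity of dimensions that makes $E_{x,i}=V_i(x)\cap V_i^-(x)$ a splitting follow, and (iii) the assertion that on $E_{x,i}$ the $\limsup$ upgrades to a genuine two-sided limit equal to $\chi_i$ is a separate estimate (one must rule out oscillation by playing the forward bound on $V_i$ against the backward bound on $V_i^-$, including control of the angles between the summands along the orbit). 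None of these is false, but as written they are assumed rather than established, so the proposal should be regarded as an accurate summary of the classical proof scheme rather than a complete argument; for the purposes of this paper the appropriate justification is simply the citation to Oseledets, as the authors do.
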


{The set of points for which the Oseledets Theorem holds for a certain ergodic measure is called a Lyapunov regular set, denoted by ${\cR}$.}

 \begin{defi}\label{Hyperbolicmeasure}
     An ergodic measure $\mu$ is hyperbolic of saddle type if \linebreak $|\chi_i(f,\mu)|>0 $ for every $i$ and    $\chi_1(f, \mu)<0<\chi_{k}(f, \mu)$. We will denote the set of these ergodic measures by $\cM_f^h$.
 \end{defi}
Note that we  consider that every hyperbolic measure is an ergodic measure. 
 
 When $\mu \in \cM_f^h$ we define, for $\mu$-almost every $x\in M$, the splitting  $T_x M=E^+_x(f) \oplus E^-_x(f)$ where $E^+_x,\, E^-_x$ are linear spaces such that \linebreak $\lim _{n \rightarrow \infty} \frac{1}{n} \log \left\|D f_x^n v\right\|<0$ on $E^+(x) \backslash\{0\}$, and $\lim _{n \rightarrow \infty} \frac{1}{n} \log \left\|D f_x^{-n} v\right\|>0$ on $E^-(x) \backslash\{0\}$.

For $f\in \Diff^{1+}(M)$ the stable Pesin set of the point $x \in M$ is
$$
W^{-}(x)=\left\{y \in M: \limsup _{n \to+\infty} \frac{1}{n} \log d\left(f^n(x), f^n(y)\right)<0\right\}.
$$
Similarly one defines the unstable Pesin set as
$$
W^{+}(x)=\left\{y \in M: \limsup _{n \rightarrow+\infty} \frac{1}{n} \log d\left(f^{-n}(x), f^{-n}(y)\right)<0\right\}.
$$
It is known that given $\mu \in \cM_f^h$, for $\mu$ almost every $x\in M$ the stable and unstable Pesin sets  are immersed submanifolds with the dimension ${\dime}(E^-)$ and ${\dime}(E^+)$ respectively and therefore are called  stable and unstable Pesin manifolds \cite{Pes76}. {When $\{x,f(x)\cdots f^{p-1}(x)\}\in \operatorname{Per}_h(f)$ we have that $W^s(f^i(x))=W^-(f^i(x))$}. 
Given a hyperbolic measure $\mu$ of $f$ we define  the  $\operatorname{s-index}$ of $\mu$ as {\em $\operatorname{s-index}(\mu):=\operatorname{dim}(E^-)$.}  
In the remaining of the section, we consider only ergodic measures with the same s-index.

\begin{defi}
For two  measures $\mu_1, \mu_2 \in \cM_f^h$, we write $\mu_1 \preceq \mu_2$ if there are two sets $\Lambda_1, \Lambda_2$ such that $\mu_i\left(\Lambda_i\right)>0$ with the property that for any point $\left(x, y\right) \in \Lambda_1 \times \Lambda_2$ we have that $W^{+}\left(x\right) \pitchfork W^{-}\left(y\right)\neq \emptyset$.
\end{defi} 

\begin{defi}\label{def-HR-measures}
    Two ergodic measures $\mu_1, \mu_2 \in \cM_f^h$ are homoclinically related if $\mu_1 \preceq \mu_2$ and $\mu_2 \preceq \mu_1$. In this case, we write $\mu_1 \sim \mu_2$. The homoclinic class of the measure $\mu\in \cM_f^h$ is the set
    $$HC(\mu)=\{\nu\in \cM_f^h: \mu \sim \nu\},$$
    and the topological homoclinic class is the set  
    $$HC_T(\mu):=\overline{\bigcup_{\nu \in HC(\mu)}\operatorname{supp}(\nu)}.$$
\end{defi} 

The relation $\left(\cM_f^h,\sim\right)$ is an equivalence relation \cite[Proposition 2.11]{BCS22}.

\begin{defi}\label{RelationMuO}
Given $\mu\in \cM_f^h$ and  $\mathcal{O}\in \operatorname{Per}_h(f)$ with $\mathcal{O}=\{p,\cdots f^{n-1}(p)=p\}$. We say that $\mu$ and ${\cO}$ are homoclinically related and write   ${\cO} \sim \mu$ or $p\sim \mu $ if the measure $\frac{1}{n} \sum_{j=0}^{n} \delta_{f^j(p)}$ is homoclinically related to the measure $\mu$. 
    
\end{defi}

\begin{prop}\cite[Corollary 2.14.]{BCS22}\label{HoclinicClassRelation}
    For any $f \in {\Diff}^r(M)$, $r > 1$, and any $\mu \in \cM_f^h$, the set of measures supported by periodic orbits is dense in the set of hyperbolic ergodic measures homoclinically related to $\mu$, endowed with the weak-$\ast$ topology. In particular, there exists ${\cO} \in \operatorname{Per}^h(f)$ such that ${\cO} \sim \mu$ and

$$HC_T(\mu) = HC(\mathcal{O}).$$
\end{prop}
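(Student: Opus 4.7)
The plan is to combine Katok's horseshoe approximation theorem with transitivity of the homoclinic relation on $\cM_f^h$. Fix $\mu\in \cM_f^h$, and let $\nu \in \cM_f^h$ with $\nu \sim \mu$. I must produce a sequence of hyperbolic periodic orbits $\cO_n$, each homoclinically related to $\mu$, whose invariant measures converge weak-$\ast$ to $\nu$. Katok's theorem (valid since $f\in \Diff^{1+}(M)$ and $\nu$ is hyperbolic) provides, for each $n$, a basic set (horseshoe) $\Lambda_n$ lying inside a Pesin block of $\nu$, with the same $\operatorname{s-index}$ as $\nu$, together with a periodic orbit $\cO_n \subset \Lambda_n$ whose empirical measure converges weak-$\ast$ to $\nu$. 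Because $\Lambda_n$ lies in a Pesin block of $\nu$, the local stable and unstable manifolds of points of $\Lambda_n$ have \emph{uniform} size and match, up to $C^1$-small errors, the Pesin manifolds of nearby Lyapunov regular points of $\nu$.

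The central step is to verify $\cO_n \sim \mu$ for $n$ large. From $\nu \sim \mu$, Definition \ref{def-HR-measures} gives positive-measure sets $A\subset \supp(\nu)$ and $B\subset \supp(\mu)$ such that for every $(x,y)\in A\times B$ the intersection $W^+(x)\pitchfork W^-(y)$ is non-empty, and symmetrically. I would pick a Lyapunov regular $x\in A$ that returns into the Pesin block of $\Lambda_n$ (realized for infinitely many $n$ by Poincar\'e recurrence and positivity of measure) and a periodic point $p_n\in \cO_n$ close to $x$. Uniformity of the Pesin geometry along $\Lambda_n$, combined with a standard $\lambda$-lemma argument, forces $W^u(p_n)\pitchfork W^-(y)\neq\emptyset$ and $W^s(p_n)\pitchfork W^+(y)\neq \emptyset$ for $y$ in a positive-measure subset of $B$. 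This gives $\cO_n\sim \nu$ in the measure-theoretic sense of Definition \ref{RelationMuO}, and then $\cO_n\sim \mu$ by transitivity of $\sim$ on $\cM_f^h$, completing the density assertion. I expect the main obstacle to be precisely this ``transfer'' of homoclinic relation from $\nu$ to the approximating orbits: one has to track how the Pesin block size depends on $n$ and ensure that the $\lambda$-lemma applies in the non-uniformly hyperbolic setting without losing the transversality.

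For the ``in particular'' part, applying the density statement to $\nu=\mu$ yields some $\cO\in \operatorname{Per}_h(f)$ with $\cO\sim \mu$. If $\cO'\in \operatorname{Per}_h(f)$ satisfies $\cO'\sim \cO$, transitivity gives $\cO'\sim \mu$, so the periodic measure on $\cO'$ belongs to $HC(\mu)$ and $\cO'\subset HC_T(\mu)$; taking closures yields $HC(\cO)\subset HC_T(\mu)$. Conversely, given any $\nu\in HC(\mu)$, the density statement produces periodic orbits $\cO_n \sim \mu$, hence $\cO_n\sim \cO$ by transitivity, whose empirical measures converge weak-$\ast$ to $\nu$; in particular $\supp(\nu)\subset \overline{\bigcup_n \cO_n}\subset HC(\cO)$, and varying $\nu\in HC(\mu)$ gives $HC_T(\mu)\subset HC(\cO)$.
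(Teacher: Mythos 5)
The paper itself offers no proof of this proposition: it is imported verbatim as Corollary 2.14 of \cite{BCS22}, so there is no internal argument to compare against. Your sketch reconstructs the standard proof behind that corollary — Katok horseshoes built over a Pesin block of $\nu$, uniform-size local invariant manifolds, an inclination-lemma transfer of transverse intersections, and transitivity of $\sim$ on $\cM_f^h$ — and in outline it is sound, including the deduction of $HC_T(\mu)=HC(\cO)$ from the density statement (the closing step $\supp\nu\subset\overline{\bigcup_n\cO_n}$ from weak-$\ast$ convergence is correct).

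Two points need repair. First, the step where you ``pick a Lyapunov regular $x\in A$ \dots\ and a periodic point $p_n\in\cO_n$ close to $x$'' is not justified: Katok's construction places periodic points near the generic orbit segments of $\nu$ (inside the Pesin block $K$ chosen in the construction) that they shadow, but it does not produce a periodic point near a \emph{prescribed} positive-$\nu$-measure set $A$; the Hausdorff approximation of $\supp\nu$ by the horseshoe goes only one way. The correct route — which you in fact half-state — is to relate $\cO_n$ to $\nu$ directly: $p_n$ is uniformly close to some $\nu$-regular point $z\in K$ with $\nu(B(z,r)\cap K)>0$ for all $r>0$; uniform size and continuity of the Pesin manifolds on $K$ then give $W^{u/s}_{\delta}(p_n)\pitchfork W^{-/+}(z')\neq\emptyset$ for every $z'$ in a positive-measure neighborhood of $z$ in $K$, hence $\cO_n\sim\nu$, and finally $\cO_n\sim\mu$ by transitivity. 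Second, your intermediate conclusion is mislabeled: intersections with $W^{\mp}(y)$ for $y\in B\subset\supp(\mu)$ would yield a relation of $\cO_n$ with $\mu$, not with $\nu$; either argue through $\nu$ as above or drop the detour through $A\times B$ altogether. With these adjustments your argument coincides with the one in \cite[Corollary 2.14]{BCS22}.
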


\begin{prop}\label{criterio-MME}
  Let $f: M \rightarrow M$ be a $C^r$ diffeomorphism for some  $r>1, \phi: M \rightarrow {\bR}$ be a H\"older continuous potential, and $p$ a hyperbolic periodic point with orbit ${\cO}$. Then there is at most one hyperbolic equilibrium state for $\phi$ which is homoclinically related to ${\cO}$, and its support coincides with ${\HC}({\cO})$.
  
\end{prop}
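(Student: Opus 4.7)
The plan is to reduce the statement to the uniqueness of equilibrium states for locally H\"older potentials on transitive countable Markov shifts, a strategy implicit in \cite{BCS22} and grounded in Sarig's symbolic dynamics. Suppose, for contradiction, that $\mu_1 \neq \mu_2$ are two ergodic hyperbolic equilibrium states for $\phi$, both homoclinically related to $\cO$. Since $\sim$ is an equivalence relation on $\cM_f^h$ (\cite[Proposition 2.11]{BCS22}), we have $\mu_1 \sim \mu_2$, and both realize the common pressure $P(f,\phi|_{HC(\cO)})$.

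First I would invoke a countable Markov coding for the non-uniformly hyperbolic dynamics in the spirit of Sarig and Ben Ovadia: there exist a countable Markov shift $(\Sigma,\sigma)$ and a finite-to-one H\"older semiconjugacy $\pi:\Sigma\to M$ such that every hyperbolic ergodic measure of $f$ lifts to a $\sigma$-invariant measure on $\Sigma$. The decisive step is to show that the dynamical homoclinic relation downstairs corresponds to the combinatorial notion of communication between states upstairs; consequently $\mu_1\sim\mu_2$ forces the lifts $\hat\mu_1,\hat\mu_2$ to be supported on a single irreducible component $\Sigma'\subset\Sigma$. The lifted potential $\hat\phi=\phi\circ\pi$ is locally H\"older on $\Sigma'$, and its Gurevich pressure equals $P(f,\phi|_{HC(\cO)})$, so both $\hat\mu_i$ are ergodic equilibrium states for $\hat\phi$ on $\Sigma'$. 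Sarig's uniqueness theorem for positively recurrent locally H\"older potentials on transitive countable Markov shifts then yields $\hat\mu_1=\hat\mu_2$, and pushing forward by $\pi$ gives $\mu_1=\mu_2$, the desired contradiction.

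For the identification of the support with ${\HC}(\cO)$, note first that by Proposition \ref{HoclinicClassRelation} the unique equilibrium state $\mu$ is a weak-$\ast$ limit of measures carried by periodic orbits in ${\HC}(\cO)$, so $\operatorname{supp}(\mu)\subset{\HC}(\cO)$. Conversely, the Gibbs-type lower bound for equilibrium states of H\"older potentials on transitive countable Markov shifts, combined with the density of $\{\cO'\in\Per_h(f):\cO'\sim\cO\}$ in ${\HC}(\cO)$ (Definition \ref{Characterization-Homoclinic-class}), forces every nonempty open subset of ${\HC}(\cO)$ to receive positive $\mu$-mass. Hence $\operatorname{supp}(\mu)={\HC}(\cO)$.

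The main obstacle is precisely the construction of the symbolic extension and the verification that the dynamical homoclinic relation on measures corresponds to communication between states of the shift; this requires sharp shadowing of non-uniformly hyperbolic orbits by symbolic ones and careful control of the recurrence of hyperbolic ergodic measures through the coding. Once this correspondence is in place, Sarig's uniqueness theorem handles both the uniqueness and the full-support assertion in one stroke, but it is this coding step that carries almost all of the technical weight; a fully self-contained proof would essentially recapitulate the relevant portions of \cite{BCS22} and its symbolic ancestors.
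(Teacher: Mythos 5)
Your proposal is correct and follows essentially the same route as the paper, whose proof simply cites Ben Ovadia's symbolic coding \cite{Ova18} (generalizing Sarig \cite{Sar13}) together with \cite[Theorem 3.1]{BCS22}, the result whose proof you are effectively recapitulating: homoclinically related hyperbolic measures lift to a single irreducible component of the countable Markov shift, where Sarig's uniqueness theorem and the Gibbs-type estimates give both uniqueness and the identification of the support with ${\HC}({\cO})$.
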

\begin{proof}
    The proof is a consequence of \cite{Ova18}, which is a generalization of \cite{Sar13}, and Theorem \cite[Theorem 3.1]{BCS22}.
\end{proof}

\begin{remark}
{It is important to observe that the above proposition does not imply that for each $\mathcal{O}\in \operatorname{Per}(f)$, $HC(\mathcal{O})$ has at most one hyperbolic measure of maximal entropy. There can exist two orbits $\mathcal{O}_1$ and $\mathcal{O}_2 \in \operatorname{Per}(f)$ that are not homoclinically related to each other, each one related to a hyperbolic measure for maximal entropy, such that $HC(\mathcal{O}_1)\subset HC(\mathcal{O}_2)$. Consequently, $HC(\mathcal{O}_2)$ could have two ergodic measures of maximal entropy.}
\end{remark}
%%%%%%%%%%%%%%%%%%%%%%%%%%%%%%%%%%%%%
%%%%%%%%%%%%%%%%%%%%%%%%%%%%%%%%%%%%%%
\subsection{Partially hyperbolic systems}\label{ss-PH}
The notion of partially hyperbolic systems represents a natural extension of the concept of uniformly hyperbolic systems. 

\begin{defi}\label{def-partialh}
     A diffeomorphism $f$ defined on a compact manifold $M$ is  partially hyperbolic if it  admits a non-trivial $Df$- invariant splitting of the tangent bundle $TM=E^s\oplus E^c\oplus E^u$, such that, all unit vectors $v^{\sigma}\in E^\sigma_x$ ($\sigma=s,c,u$) with $x\in M$ satisfy

\begin{equation*}
    \|Df_xv^s\|<\|Df_xv^c\|<\|Df_xv^u\|
\end{equation*}
for some appropriate Riemannian metric; $f$ must also satisfy that $0<\|Df_{|E^s}\|<\xi<1$ and $0<\|Df^{-1}_{|E^u}\|<\xi<1$.

\end{defi}
%%%%%%%%%%%%%%%%%%%%%%%%%%%%%%%%%%%%%
%%%%%%%%%%%%%%%%%%%%%%%%%%%%%%%%%%%%%%
\subsubsection{ Existence of equilibrium states}\label{subsebtion-dimEc1-mme}
Newhouse proved that every $C^\infty$ diffeomorphism has m.m.e.\ \cite{New91}, but this is not true for diffeomorphisms with less differentiability \cite{Mis73}. It is known that every Anosov diffeomorphism is expansive and therefore has equilibrium states for every H\"older continuous potential. In general, a partially hyperbolic system does not have equilibrium states.
   
   Let $\phi$ be a H\"older continuous potential. Consider the function  $P_{metric}(f,\phi)$,  defined as 
 $$P_{metric}(f, \phi):  \cM_f \to \bR, \,\, \mu \mapsto P_{\mu}(f, \phi).
 $$

Since $\cM_f$ is a compact set in the weak * topology, if the metric entropy  is upper semicontinuous then $P_{metric}(f,\phi)$ achieves a maximal value and so $(f, \phi)$ has an equilibrium state. 
 
 For homeomorphisms, Bowen showed \cite{Bow72} that the metric entropy
 is upper semicontinuous whenever $(M, f)$ is a $h$-expansive system. 
In particular, the pressure $P_\mu(f,\phi)$ is also upper semicontinuous  and then
$(f, \phi)$ admits an equilibrium state for any continuous potential $\phi$.
In particular,  since hyperbolic systems with one-dimensional center bundles are $h$-expansive,
 \cite{CY05, DFPV12, LVY13},  they have equilibrium states.
%%%%%%%%%%%%%%%%%%%%%%%%%%%%%%%%%%%%%
%%%%%%%%%%%%%%%%%%%%%%%%%%%%%%%%%%%%%%
\begin{comment}

\subsubsection{Hyperbolic measures}\label{Section-Hyerbolic-Measures}
Consider a partially hyperbolic diffeomorphism $f$ with a  one-dimension center , denoted as $E^c$, and stable and unstable subspaces $E^s$ and $E^u$, respectively, and $E^u$ with dimensions greater than zero. Let $\mu$ be an ergodic measure associated to $f$.  The measure $\mu$ is  a {\emph{hyperbolic measure of saddle type}} for $f$ if and only if for $\mu$-almost every point $x$ in $M$, the Lyapunov exponent in the central direction is different from zero, that is
$$\lambda_c(\mu, f) := \lim_{n \to \infty} \frac{1}{n} \log |Df^n_{E^c}(x)| = \int \log {|Df_{E^c}|} d\mu \neq 0.$$
%Therefore, in the context of a partially hyperbolic system with a \linebreak one-dimensional central component, determining whether a given ergodic measure is hyperbolic is achieved by verifying if the Lyapunov exponent in the central direction is non-zero
\end{comment}

%%%%%%%%%%%%%%%%%%%%%%%%%%%%%%%%%%%%%
%%%%%%%%%%%%%%%%%%%%%%%%%%%%%%%%%%%%%%

\subsection{Unstable entropy }

In this section, we recall the notions of unstable $h^u(f)$ and stable $h^s(f)$  topological entropy for a  partially hyperbolic diffeomorphism $f$, see \cite{Yan21, HHW17,Tah21,LY85a,LY85b}.

\subsubsection{Unstable metric entropy}
Here we follow closely  \cite{Tah21}. 

    Consider a compact metric space $M$ with the Borel $\sigma$-algebra ${\cB}$ and let $\mu$ represent a Borel probability measure defined on this space. Let ${\cP} = \{P_i\}_{i\in I}$ be a partition of $M$ into measurable subsets, i.e., $M = \bigcup_{P_i\in\mathcal{P}} P_i$ such that $P_i\in {\cB}$, $\mu(P_i \cap P_j) = 0$, and $\mu\left(\bigcup_{P_i\in {\cP}} P_i\right) = \mu(M) = 1$. We denote by $\sigma({\cP})$ the smallest $\sigma$-algebra containing all $P_i\in {\cP}$.

Given two partitions ${\cP}$ and ${\cQ}$, we use the notation ${\cP} \leq {\cQ}$ if   $\sigma({\cP})\subset\sigma({\cQ})$.

\begin{defi}
    We say that a partition ${\cP}$ is measurable (or countably generated) with respect to $\mu$ if there exists a {family of measurable sets} $\{A_i\}_{i\in\mathbb{N}}$ and a measurable set $F$ of full measure such that if $B \in {\cP}$, then there exists a sequence $\{B_i\}$, where $B_i \in \{A_i, A_i^c\}$ such that $B \cap F = \bigcap_i (B_i \cap F)$.
\end{defi}
%\textcolor{blue}{concertei os P de partição}
Consider $\pi: M \rightarrow M/\mathcal{P}$ as the canonical projection, mapping each point in $M$ to the corresponding element in the quotient space formed by  the atoms of the partition. 
This naturally gives rise to a measurable structure on $M/\mathcal{P}$ via  the measure $\widetilde{\mu} = \pi^*\mu$ defining it on the associated pushed $\sigma$-algebra. 
Additionally, for any point $x$ in $M$, $P(x)$ represents the element of the partition to which $x$ belongs.

\begin{defi}\label{ConditionalMeasure}
Given a partition $\mathcal{P}$, a family $\{\mu_P\}_{P \in \mathcal{P}}$ is a system of conditional measures for $\mu$ (with respect to $\mathcal{P}$) if, for a given $\varphi \in C_0(M)$:
\begin{enumerate}
    \item $P \mapsto \int \varphi \, d\mu_P$ is $\tilde{\mu}$-measurable.
    \item $\mu_P(P) = 1$ $\tilde{\mu}$-a.e.
    \item $\int_M \varphi \, d\mu = \int_{M/P} \left(\int_{P} \varphi \, d\mu_P\right)d\tilde{\mu} \,  = \int_M \left(\int_{P(x)} \varphi \, d\mu_P(x)\right) \, d\mu$.
\end{enumerate}

\end{defi}

\begin{thm} \cite[Rokhlin's Disintegration]{Roh49}
Let $P$ be a measurable partition of a compact metric space $M$, and $\mu$ a Borel probability measure. Then there exists a unique disintegration by conditional probability measures for $\mu$.
\end{thm}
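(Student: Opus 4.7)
The plan is to construct the conditional measures via an approximation by finite partitions and martingale convergence, then deduce uniqueness from the characterization of conditional expectation. First I would exploit the measurability hypothesis: there is a countable family $\{A_i\}_{i\in\mathbb{N}}$ and a full-measure set $F$ such that every atom $B\in\mathcal{P}$ satisfies $B\cap F=\bigcap_i(B_i\cap F)$ for some choice $B_i\in\{A_i,A_i^c\}$. Setting $\mathcal{P}_n$ to be the finite partition generated by $A_1,\dots,A_n$, the filtration $\mathcal{F}_n=\sigma(\mathcal{P}_n)$ increases to $\sigma(\mathcal{P})$ modulo $\mu$-null sets. Since $M$ is compact metric, $C(M)$ is separable, so fix a countable uniformly dense family $\{\varphi_k\}\subset C(M)$ containing the constant function $1$.

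Next, for each $k$ the sequence $E_n^k:=E_\mu(\varphi_k\mid\mathcal{F}_n)$ is a uniformly bounded $\mathcal{F}_n$-martingale, so by Doob's martingale convergence theorem it converges $\mu$-almost everywhere (and in $L^1$) to $\Phi_k:=E_\mu(\varphi_k\mid\sigma(\mathcal{P}))$. Intersecting the countably many full-measure sets where convergence holds, where the limit is positive when $\varphi_k\geq 0$, where $\mathbb{Q}$-linearity in $k$ is preserved, and where $\Phi_{k_0}\equiv 1$ for $\varphi_{k_0}\equiv 1$, I obtain a full-$\mu$-measure set $M_0$ on which $\varphi_k\mapsto\Phi_k(x)$ is a positive normalized linear functional on a dense subspace of $C(M)$. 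It extends continuously to $C(M)$, and by the Riesz–Markov representation theorem corresponds to a Borel probability measure $\mu_{P(x)}$ on $M$ depending only on the atom through $x$. Measurability in condition (1) of Definition~\ref{ConditionalMeasure} is immediate from the measurability of conditional expectations, and the integration formula in (3) is exactly the defining property $\int_{\pi^{-1}(B)}\varphi\,d\mu=\int_B E_\mu(\varphi\mid\sigma(\mathcal{P}))\,d\tilde\mu$.

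The main obstacle will be verifying condition (2), namely $\mu_P(P)=1$ for $\tilde\mu$-a.e.\ $P$, because an individual atom $P$ may itself be $\mu$-null and cannot be probed by a single almost-sure statement. The idea is to use the separating family $\{A_i\}$: by approximating the indicator $\mathbf{1}_{A_i}$ in $L^1$ by elements of $\{\varphi_k\}$ (or running the construction directly on $\mathbf{1}_{A_i}$ with the same martingale argument), one finds that for $\tilde\mu$-a.e.\ $P$,
\begin{equation*}
\mu_P(A_i)=\mathbf{1}_{A_i\supset P}\quad\text{for every }i\in\mathbb{N}.
\end{equation*}
Intersecting the countably many negligible exceptional sets and using that $P=\bigcap_i B_i(P)$ modulo a $\mu$-null set, one concludes $\mu_P\bigl(\bigcap_i B_i(P)\bigr)=1$, hence $\mu_P(P)=1$ outside a $\tilde\mu$-null subset of $M/\mathcal{P}$.

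Finally, uniqueness is comparatively routine. If $\{\mu_P'\}$ is another disintegration, then for every $\varphi\in C(M)$ the map $P\mapsto\int\varphi\,d\mu_P'$ is $\tilde\mu$-measurable and satisfies the defining integral identity of $E_\mu(\varphi\mid\sigma(\mathcal{P}))$, hence agrees $\tilde\mu$-a.e.\ with $P\mapsto\int\varphi\,d\mu_P$. Applying this to each element of the countable dense family $\{\varphi_k\}$ and intersecting the resulting full-measure sets yields a single set of full $\tilde\mu$-measure on which $\int\varphi_k\,d\mu_P=\int\varphi_k\,d\mu_P'$ for all $k$; density in $C(M)$ then gives $\mu_P=\mu_P'$ as Borel probability measures, completing the argument.
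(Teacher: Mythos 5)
The paper does not actually prove this statement --- it is quoted from Rokhlin \cite{Roh49} without proof --- so there is no internal argument to compare against; your martingale-plus-Riesz construction is the standard proof of the disintegration theorem, and as a sketch it is essentially correct. Two points should be made explicit to close it. First, for the Riesz functionals to depend only on the atom you need each $A_i$ to be a union of atom traces on $F$; this does follow from the paper's definition of measurable partition (if $x,y\in F$ lie in the same atom $B$ and $B\cap F=\bigcap_j(B_j\cap F)$ with $B_j\in\{A_j,A_j^c\}$, then $x\in A_i\iff y\in A_i$), and it is exactly what makes $E_\mu(\varphi_k\mid\mathcal{F}_n)$ constant on atoms within $F$; note also that the limit $\sigma$-algebra is $\sigma(\{A_i\})$, which coincides with the $\sigma$-algebra of $\mathcal{P}$-saturated sets only modulo $\mu$-null sets --- sufficient for condition (1) of Definition \ref{ConditionalMeasure}, but worth stating, since the paper's $\sigma(\mathcal{P})$ (generated by the atoms themselves) is a different and generally smaller object. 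Second, in your verification of condition (2) you evaluate $\mu_P$ on the Borel sets $A_i$ and, implicitly, on $F$, whereas the construction initially yields the identity $\int_S\bigl(\int\varphi\,d\mu_{P(x)}\bigr)d\mu(x)=\int_S\varphi\,d\mu$ only for continuous $\varphi$ (and saturated $S$); one must extend it to indicators of Borel sets by a monotone class argument using the regularity of Borel measures on the compact metric space $M$, identify $x\mapsto\mu_{P(x)}(A_i)$ with $E_\mu(\mathbf{1}_{A_i}\mid\sigma(\{A_i\}))=\mathbf{1}_{A_i}$ a.e., and check $\mu_P(F)=1$ for $\tilde\mu$-a.e.\ $P$ (from $\mu(F)=1$) before concluding $\mu_P(P)=\mu_P\bigl(\bigcap_i B_i(P)\cap F\bigr)=1$. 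These are routine steps, and together with your uniqueness argument (agreement on a countable dense subset of $C(M)$ off a single $\tilde\mu$-null set) the proof is complete.
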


Let $f : M \rightarrow M$ be a diffeomorphism of a compact manifold preserving a probability measure $\mu$. Consider a partition $P = \{P_1, P_2, \ldots, P_n\}$ into measurable subsets. Then:

    $$H_\mu(\mathcal{P}) = \sum_{i=1}^{n} -\mu(P_i) \log(\mu(P_i)) = \int_{M} -\log(\mu(P(x))) \, d\mu,$$
    where $P(x) = P_i$ for $x \in P_i$. And
    
     $$h_\mu(\mathcal{P}, f) = \lim_{n\to\infty} \frac{1}{n} H_\mu(\mathcal{P}_n) = \lim_{n\to\infty} \left(H_\mu(\mathcal{P}_n) - H_\mu(\mathcal{P}_{n-1})\right),$$ where $\mathcal{P}_n = \mathcal{P} \vee f^{-1}(\mathcal{P}) \vee \ldots \vee f^{-n+1}(\mathcal{P})$ is a refinement of $\mathcal{P}$ by the action of $f$.

Now let ${\cF}$ be an $f$-invariant foliation, that is, {$f({\cF}(x))={\cF}(f(x))$}, for every $x \in M$, {where $\mathcal{F}(x)$ denotes the leaf at $x$.}

We say that ${\cF}$ is uniformly expanding if there exists $\lambda > 1$ such that $\forall x \in M$, $\|Df^{-1}_{|T_x(F(x))}\| < \lambda^{-1}$. A measurable partition  $\xi$ is called increasing and subordinated to ${\cF}$ if it satisfies the following properties:
\begin{itemize}
    \item[(a)] $\xi(x) \subseteq {\cF}(x)$ for $\mu$-almost every $x$.
    \item[(b)] $f^{-1}(\xi) \geq \xi$ (increasing property).
    \item[(c)] $\xi(x)$ contains an open neighborhood of $x$ in ${\cF}(x)$ for $\mu$-almost every $x$.
\end{itemize}

Finding a measurable and  increasing partition that aligns with an invariant lamination is typically a complex challenge. However, when dealing with a uniformly expanding foliation that remains invariant under a diffeomorphism, such a partition is always available \cite{Yan21,LY85a,LY85b}. Additionally, for expanding foliations ${\cF}$ it holds
\begin{itemize}
    \item[(d)] $\vee^\infty_{n=0} f^{-n}\xi$ is the partition into points;
    \item[(e)] the largest $\sigma$-algebra contained in $\bigcap^\infty_{n=0} f^n(\xi)$ is ${\cB\cF}$ where ${\cB\cF}$ is the $\sigma$-algebra of ${\cF}$-saturated measurable subsets (union of entire leaves).
\end{itemize}

Given a measure $\mu$ and two measurable partitions $\alpha$, $\beta$ of $M$, the conditional entropy of $\alpha$ given $\beta$ is denoted  by
$$
H_\mu(\alpha|\beta) := \int_M \log(\mu^{\beta}_x(\alpha(x)))d\mu(x),
$$
where $\mu^\beta_x$ is the conditional measure of $\mu$ with respect to $\beta$ as Definition \ref{ConditionalMeasure}.
For any invariant lamination $\mathcal{F}$ with a measurable subordinated increasing partition satisfying all the properties including (d) and (e) above, we define

$$h_\mu(f, {\cF}) := h_\mu(f, \xi) = H_\mu(f^{-1}\xi|\xi) = H_\mu(\xi|f\xi).$$

It can be proved that the above definition is independent of $\xi$ \cite[Lemma 2.8]{HHW17}. 

Let $f$ be  a partially hyperbolic diffeomorphism $f$  such that ${\dime}(E^u_f)\geq 1$ and $\mu\in \cM_f$. We define

$$h_\mu^u(f)=h_\mu(f,{\cF}^u),$$
where ${\cF}^u$ denotes the  unstable foliation given by the unstable manifolds.

Analogously, if the dimension of the stable bundle $E^s_f$ is greater than or equal to 1, we can define the stable entropy for any $\mu \in \cM_f$ as 
$$h^s_\mu(f) := h_\mu(f^{-1},{\cF}^s),$$
where ${\cF}^s$ denotes the stable foliation  given by the stable manifolds.
%%%%%%%%%%%%%%%%%%%%%%%%%%%%%%%%%%%%%%%%%%
%%%%%%%%%%%%%%%%%%%%%%%%%%%%%%%%%%%%%%%%%%
\subsubsection{ Unstable topological entropy and the variational principle}
Similar to the definition of topological entropy for continuous maps on compact metric spaces, the concept of topological entropy along the unstable foliation can be formulated by considering the notion of spanning (and separated) sets within a collection of compact unstable plaques.

 Let $f$ be  a partially hyperbolic diffeomorphism $f$  such that ${\dime}(E^u)\geq 1$.
We denote by $d^u$ the metric induced by the Riemannian structure on the unstable manifold and let $d_n^u(x, y)=\max _{0 \leq j \leq n-1} d^u\left(f^j(x), f^j(y)\right)$. 
Let $W^u(x, \delta)$ be the open ball inside $W^u(x)$ centred at $x$ of radius $\delta$ with respect to the metric $d^u$. Let $N^u(f, \epsilon, n, x, \delta)$ be the maximal number of points in $\overline{W^u(x, \delta)}$ with pairwise $d_n^u$-distances at least $\varepsilon$. 
We call such a set an $(n, \varepsilon)$ u-separated set of $\overline{W^u(x, \delta)}$. We can alternatively define unstable topological entropy by employing $(n, \epsilon)$ u-spanning sets or open covers, yielding equivalent definitions. 

%\textcolor{green}{replacing $h^u_{top}$ by $h^u$}
\begin{defi}\label{def-u-s-entropia}
The unstable topological entropy of $f$ on $M$ is defined by
$$
h^u(f)=\lim_ {\delta \rightarrow 0} \sup _{x \in M} h^u\left(f, \overline{W^u(x, \delta)}\right),
$$
where
$$
h^u\left(f, \overline{W^u(x, \delta)}\right)=\lim _{\epsilon \rightarrow 0} \limsup _{n \rightarrow \infty} \frac{1}{n} \log N^u(f, \epsilon, n, x, \delta) .
$$    
\end{defi}

Analogously, if the dimension of the stable bundle $E^s_f$ is greater than or equal to 1, we can define the stable entropy for any $\mu \in \cM_f$ as $h^s(f) := h^u(f^{-1})$.

%\begin{remark}\label{h^u(f)=h^s(f^{-1})}
%    Note that when $h^u(f)\neq h^s(f)$ we have that  $h^u(f)>h^s(f)$ or $h^u(f^{-1})> h^s(f^{-1})$.  
%\end{remark}

Similar to the traditional definition of entropy, we can establish a connection between metric unstable entropy and unstable topological entropy through a variational principle. 
Specifically, according to \cite[Theorem D]{HHW17}, for a $C^{1}$-partially hyperbolic diffeomorphism $f : M \rightarrow M$, the following relation holds:
$$h^u(f) = \sup\{h^u_{\mu}(f) : \mu \in \cM_f\}
\quad \mbox{and} \quad h^u(f) = \sup\{h^u_{\nu}(f) : \nu \in \cM_f^e\}.$$

{An ergodic $f$ invariant measure satisfying $h_\mu(f)=h^u(f)$  is called an ergodic measure of maximal $u$-entropy.}

Another way to define topological unstable entropy  is by considering the unstable volume growth given by Hua, Saghin, and Xia (\cite{HSX08}), reminiscent of the works by Yomdin and Newhouse (\cite{Yom87, New89}). 
In \cite[Theorem C]{HHW17}  it is shown that the unstable topological entropy as defined here coincides with the unstable volume growth.
%%%%%%%%%%%%%%%%%%%%%%%%%%%%%%%%%%%%%
%%%%%%%%%%%%%%%%%%%%%%%%%%%%%%%%%%%%
\subsection{Hausdorff distance}
In this section we will define  the Hausdorff distance, we follow \cite[Section 7.3]{BBI22}.

\begin{defi}
    Let $A$ and $B$  be subsets of the metric space $(M,d)$. The Hausdorff distance between $A$ and $B$, denoted by $d_H(A,B)$ is defined by
    $$d_H(A,B)=\max\{\sup_{a\in A} d(a,B),\sup_{b\in B} d(b,A) \}.$$
    
   \end{defi}
    Let ${\cM}(M)$ be the set of all the closed subsets of $M$. It is known that $({\cM}(M),d_H)$ is a metric space and when $M$ is compact, the metric space $({\cM}(M),d_H)$ is compact. 

    \begin{remark}\label{HausdorffTopology}
Let $(K_n)_{n\in {\bN}}$ be a convergent sequence of compact subsets of the compact metric space $(M,d)$ converging to $K$ in the Hausdorff Metric. If $x_n\in K_n$ forms a convergent sequence in $(M,d)$ converging to $x$, then $x\in K$. Moreover, if $(\mu_k)_{k\in {\bN}}$ are probability measures converging to $\mu$ in the weak * topology, then $\mu(K)\geq \limsup (\mu_n(K_n))$.
    \end{remark}

%%%%%%%%%%%%%%%%%%%%%%%%%%%%%%%%%%%%%%%%%%
%%%%%%%%%%%%%%%%%%%%%%%%%%%%%%%%%%%%%%%%%%
\section{Consequences of $h^u(f)-h^s(f)>0$}\label{s-consequences-hu>hs} 

Let $\Diff^{1 +}(M)$ be the set of $C^{1 +}$ diffeomorphisms defined in $M$.
From now on, $f$ will be a $C^{1 +}$ partially hyperbolic diffeomorphism with $1$-dimensional central bundle and  non trivial stable and unstable bundles, that is, \linebreak ${\dime}(E^{\sigma})>0 (\sigma=s,u)$.

For $f\in \Diff^{1+}(M)$ and $\mu$ ergodic
 by the   Ledrappier and Young formula, \cite{LY85a, LY85b} and \cite{Bro22}, we get that

\begin{equation}\label{L-Yformula}
    h_\mu(f)\leq h_\mu^u+ \sum_{\lambda_c(f,\mu)>0}\lambda_c(f,\mu) 
\end{equation}
and if the central Lyapunov exponent of $\mu$ is non-positive then, by the definition of unstable metric entropy we get
\begin{equation}\label{u-entropy-metric=entropy-metric}
    h_\mu(f)= h_\mu^u(f). 
\end{equation}

\begin{lemma}\label{mbe-implies-center-lyapunov-exponent<0}
Consider $f:M\to M$ such that  $h^u(f)>h^s(f)$. If $\mu$ is an ergodic measure such that $h_\mu(f)>h^s(f)$, then {$\lambda_c(f,\mu)\leq h^s(f)-h_\mu(f)<0$.}
\end{lemma}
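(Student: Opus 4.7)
My plan is to apply the Ledrappier--Young-type inequality \eqref{L-Yformula} to the diffeomorphism $f^{-1}$ rather than to $f$, and then exploit the symmetry between stable/unstable quantities under time reversal together with the one-dimensionality of $E^c$.

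First, I recall three basic facts. Since $\mu$ is $f$-invariant, it is $f^{-1}$-invariant and $h_\mu(f^{-1}) = h_\mu(f)$. The unstable foliation of $f^{-1}$ is, by definition, the stable foliation of $f$, so the unstable metric entropy of $f^{-1}$ with respect to $\mu$ equals the stable metric entropy $h_\mu^s(f)$; consequently $h_\mu^{u}(f^{-1}) \le h^{s}(f)$ by the variational principle for $s$-entropy (Definition \ref{def-u-s-entropia}). Finally, since $\dim(E^c) = 1$, there is a single central Lyapunov exponent and it flips sign when we replace $f$ by $f^{-1}$: the central exponent of $\mu$ for $f^{-1}$ is $-\lambda_c(f,\mu)$.

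Now I apply inequality \eqref{L-Yformula} to $(f^{-1},\mu)$. Using the three identifications above, this reads
\begin{equation*}
h_\mu(f) \;\le\; h^{s}(f) \;+\; \max\bigl(-\lambda_c(f,\mu),\,0\bigr).
\end{equation*}
I now split into cases on the sign of $\lambda_c(f,\mu)$. If $\lambda_c(f,\mu) \ge 0$, the max term vanishes and we obtain $h_\mu(f) \le h^s(f)$, contradicting the hypothesis $h_\mu(f) > h^s(f)$. Hence $\lambda_c(f,\mu) < 0$, the max term equals $-\lambda_c(f,\mu)$, and the displayed inequality rearranges to
\begin{equation*}
\lambda_c(f,\mu) \;\le\; h^s(f) - h_\mu(f) \;<\; 0,
\end{equation*}
which is the claimed bound.

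There is no serious obstacle here; the only thing to double-check is that \eqref{L-Yformula} is indeed applicable to $f^{-1}$ (it is, since $f^{-1}$ is again a $C^{1+}$ partially hyperbolic diffeomorphism with a one-dimensional central bundle, and the cited references \cite{LY85a, LY85b, Bro22} apply to any $C^{1+}$ diffeomorphism), and to confirm that the variational principle for unstable entropy from \cite[Theorem D]{HHW17} yields $h_\mu^u(f^{-1}) \le h^u(f^{-1}) = h^s(f)$, which is exactly the bound that was used.
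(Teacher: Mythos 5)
Your proof is correct and takes essentially the same route as the paper: both arguments apply the Ledrappier--Young inequality \eqref{L-Yformula} and the variational principle for stable entropy to $f^{-1}$, first ruling out $\lambda_c(f,\mu)\geq 0$ via the hypothesis $h_\mu(f)>h^s(f)$ and then rearranging to get $\lambda_c(f,\mu)\leq h^s(f)-h_\mu(f)$. The only difference is presentational: the paper isolates the sign claim using the equality \eqref{u-entropy-metric=entropy-metric} for $f^{-1}$, whereas you fold it into a single case split on the same inequality.
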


\begin{proof}
Let $\mu$ be an ergodic measure for $f$ with $h_\mu(f)>h^s(f)$.

\begin{claim}\label{Claim1}
     The central Lyapunov exponent of $\mu$ is negative, i.e., $\lambda_c(f,\mu)<0$.
\end{claim}

\begin{proof}
    Suppose that the central Lyapunov exponent of $\mu$ is non-negative, i.e., $\lambda^c(\mu,f)\geq 0$. Applying \eqref{u-entropy-metric=entropy-metric} to the stable metric entropy and considering $f^{-1}$, we obtain
    $$h_\mu(f)=h^s_\mu(f).$$
 Since by hypothesis $h_\mu(f)>h^s(f)$, we get $h_\mu^s(f)>h^s(f)$, which contradicts the variational principle  for stable entropy.
\end{proof}

By $h_\mu(f)>h^s(f)$ and Claim \ref{Claim1}, we have that $\lambda_c(f,\mu)<0$, and therefore, by \eqref{L-Yformula},

$$h_\mu(f)\leq h^s(f)-\lambda_c(f,\mu).$$

Hence, the proof is concluded.
    
\end{proof}

\begin{remark}
    It is worth noting that under the hypotheses of Lemma \ref{mbe-implies-center-lyapunov-exponent<0}, every ergodic measure {with metric entropy bigger  than the stable entropy} is hyperbolic and the metric entropy  coincides with the unstable metric entropy. Furthermore, the central Lyapunov exponent is negative, so its $s$-index is $\operatorname{dim}(E^s)+1$. This leads us to inquire whether two measures with large entropy exhibit homoclinic relation as defined in the definition \ref{def-HR-measures}.
\end{remark}

Now, we will introduce an analogous result for the pressure, but before doing so, let us establish some notations. For every $\mu \in \cM_f$ and $\phi: M \to {\bR}$, a continuous potential, we define:

$$P_\mu(f,\phi) := h_\mu(f) + \int \phi d\mu \quad \mbox{and} \quad 
P_\mu^{u/s}(f,\phi) := h_\mu^{u/s}(f) + \int \phi d\mu.$$

Furthermore,

$$P^{u/s}(f,\phi) := \sup_{\mu \in \cM_f}\left\{P_\mu^{u/s}(f,\phi)\right\}.$$

 Observe that the condition $P^u(f, \phi) > P^s(f, \phi)$ is not sufficient to guarantee positive entropy for the equilibrium states of $(f, \phi)$. When $h^u(f) > \sup \phi > \inf \phi > h^s(f)$, every measure of maximal entropy $\mu$ satisfies \linebreak   $P_\mu(f, \phi) > h^s(f) + \sup \phi$. So, we can state the following

\begin{clly}\label{mbp-implies-center-lyapunov-exponent<0}
    
Let  $\phi:M\to {\bR}$ be a continuous potential  such that $h^u(f)>\sup \phi>\inf\phi>h^s(f)$. If $\mu$ is an ergodic measure such that $P_\mu(f,\phi)>h^s(f)+\sup \phi$ then {$\lambda_c(f,\mu)\leq h^s(f)-h_\mu (f)<0$.}
\end{clly}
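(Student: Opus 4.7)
The plan is to deduce this corollary directly from Lemma \ref{mbe-implies-center-lyapunov-exponent<0} by showing that the pressure hypothesis forces the entropy bound required by that lemma. The point is that the stronger hypothesis $P_\mu(f,\phi)>h^s(f)+\sup\phi$ has been carefully tailored so that it implies $h_\mu(f)>h^s(f)$, after which the central Lyapunov estimate is already known.

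First I would unfold the definition of pressure at $\mu$, writing the hypothesis as
\[
h_\mu(f)+\int \phi\, d\mu > h^s(f)+\sup\phi.
\]
Using the trivial bound $\int \phi\, d\mu \leq \sup\phi$, I would rearrange to obtain
\[
h_\mu(f) > h^s(f)+\sup\phi - \int\phi\, d\mu \;\geq\; h^s(f).
\]
This is the only nontrivial manipulation in the argument, and it is precisely why the hypothesis is stated with $\sup\phi$ rather than $\inf\phi$ on the right-hand side (so that the loss from replacing $\int\phi\, d\mu$ by $\sup\phi$ is absorbed without eroding the gap with $h^s(f)$). The assumption $\sup\phi\geq\inf\phi>h^s(f)$ guarantees that this range of potentials is nonempty and that the displayed bound is meaningful.

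Once $h_\mu(f) > h^s(f)$ is in place, Lemma \ref{mbe-implies-center-lyapunov-exponent<0} applies verbatim to the ergodic measure $\mu$ (which is automatically hyperbolic by the remark following the lemma), yielding
\[
\lambda_c(f,\mu) \;\leq\; h^s(f)-h_\mu(f) \;<\; 0,
\]
which is the claimed inequality. There is no substantive obstacle here: the corollary is essentially a bookkeeping translation of the entropy statement into the pressure setting. The only subtle aspect is confirming that the gap $P_\mu(f,\phi)-(h^s(f)+\sup\phi)$ is indeed transferred into a gap between $h_\mu(f)$ and $h^s(f)$, which is exactly what the elementary estimate $\int\phi\, d\mu\leq\sup\phi$ accomplishes.
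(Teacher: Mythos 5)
Your proposal is correct and follows essentially the same argument as the paper: bound $\int\phi\,d\mu\leq\sup\phi$ to extract $h_\mu(f)>h^s(f)$ from the pressure hypothesis, then invoke Lemma \ref{mbe-implies-center-lyapunov-exponent<0} (its hypothesis $h^u(f)>h^s(f)$ being supplied by $h^u(f)>\sup\phi\geq\inf\phi>h^s(f)$). No gaps.
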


\begin{proof}
Let us consider $\mu \in \cM_f^e$ such that $P_\mu(f,\phi) > h^s(f)+\sup \phi$.

Since $\mu$ is a probability measure, we have $\sup \phi \geq \int \phi d\mu$. Therefore, given that $h_\mu(f) + \int \phi d\mu > h^s(f) + \sup \phi$, we can conclude that:
$$h_\mu(f) > h^s(f).$$

Now, since $h^u(f)>h^s(f)$, by Lemma \ref{mbe-implies-center-lyapunov-exponent<0}, we obtain:
$$\lambda_c(f,\phi) \leq h^s(f) - h_\mu(f) < 0.$$

Thus, we have completed the proof.
\end{proof}

Consider the map  $h: {\Diff}^{1+}(M) \to {\bR}$ such that $h(f)$ is the topological entropy of $f$.  The next Lemma shows that the property $h^u(f)>h^s(f)$ is  open: 

\begin{lemma}\label{h^u(g)>h^s(g)}
If $h^u(f) > a_1 > a_2 > h^s(f)$, then there exists a $C^1$ neighborhood ${\cU}$ of $f$ such that for every $g \in {\cU} \cap {\Diff}^{1+}(M)$ the following holds:

\begin{enumerate}
    \item  $h^u(g) > a_1 > a_2 > h^s(g)$,
    \item the function $h:{\Diff}^{1+}(M)\cap \mathcal{U} \to \mathbb{R}$ is continuous,
    \item if $\phi:M\to \bR$ is a H\"older continuous potential with $a_1=\sup \phi$ and $a_2=\inf \phi$,  the function $P(.,\phi):\Diff^{1+}(M)\cap \mathcal{U} \to \mathbb{R}$ is continuous.

\end{enumerate}  
\end{lemma}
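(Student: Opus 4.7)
The plan is to establish that both $h^u(\cdot)$ and $h^s(\cdot)$ are continuous on $\Diff^{1+}(M)$ at $f$ in the $C^1$ topology, as item (1) is then immediate from the strict inequality $h^u(f)>a_1>a_2>h^s(f)$. For items (2) and (3), I would derive the pointwise identities
\[
h(g)=\max\{h^u(g),h^s(g)\}\quad\text{and}\quad P(g,\phi)=\max\{P^u(g,\phi),P^s(g,\phi)\}
\]
on a $C^1$ neighborhood of $f$, which reduces continuity of $h$ and $P(\cdot,\phi)$ to that of $h^u,h^s,P^u,P^s$.

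For the max identities, every ergodic $g$-invariant $\nu$ satisfies $h_\nu(g)=h^u_\nu(g)$ whenever $\lambda_c(g,\nu)\le 0$ by the Ledrappier--Young formula in the one-dimensional center case (the equality recorded in \eqref{u-entropy-metric=entropy-metric}), and symmetrically $h_\nu(g)=h^s_\nu(g)$ whenever $\lambda_c(g,\nu)\ge 0$, by applying the same observation to $g^{-1}$. Hence $h_\nu(g)\le\max\{h^u(g),h^s(g)\}$ and $P_\nu(g,\phi)\le\max\{P^u(g,\phi),P^s(g,\phi)\}$ for every ergodic $\nu$; combined with the variational principles for $h,h^u,h^s$ (and for $P,P^u,P^s$), this yields the claimed equalities.

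For continuity of $h^u$ at $f$ (the argument for $h^s$ is analogous via $f^{-1}$), upper semicontinuity comes from the unstable volume growth characterization \cite[Theorem C]{HHW17}: $h^u(g)$ equals the exponential growth rate of $V_n^u(g):=\sup_x\operatorname{vol}\bigl(g^n(D^u_g(x,\delta))\bigr)$. Since $E^u_g$ and $\log\bigl|\det(Dg|_{E^u_g})\bigr|$ vary $C^0$-continuously in $g$ in the $C^1$ topology (standard cone-field argument), each $V_n^u(g)$ depends continuously on $g$, and a subadditivity estimate up to bounded distortion gives $h^u(g)\le n^{-1}(\log V_n^u(g)+C)$ for every $n$; letting $g\to f$ and then $n\to\infty$ yields $\limsup_{g\to f}h^u(g)\le h^u(f)$. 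For lower semicontinuity, pick $\alpha\in(a_1,h^u(f))$ and, via the variational principle together with Lemma~\ref{mbe-implies-center-lyapunov-exponent<0}, an ergodic $\mu\in\cM_f^e$ with $\lambda_c(f,\mu)<0$ bounded uniformly away from zero and $h_\mu(f)=h^u_\mu(f)>\alpha$; Katok's theorem then produces a hyperbolic basic set $K_f$ with $h(f|K_f)>\alpha$ whose unstable bundle coincides with $E^u_f|_{K_f}$, because $\lambda_c(\mu)<0$ forces the Oseledets unstable bundle of $\mu$ to equal $E^u_f$. The continuation $K_g$ of $K_f$ for $g$ $C^1$-close to $f$ has unstable bundle $E^u_g|_{K_g}$ by partial hyperbolicity (using that $E^c_g$ remains strictly contracting on $K_g$ by continuity of the center exponent), and $h(g|K_g)$ is continuous in $g$, so $h^u(g)\ge h(g|K_g)>\alpha$ on a smaller neighborhood. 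The continuity of $P^u(\cdot,\phi)$ and $P^s(\cdot,\phi)$ for a H\"older $\phi$ follows by the same scheme, with the potential incorporated into the volume/horseshoe estimates. The main obstacle is precisely this lower semicontinuity step: one must verify that the approximating horseshoe inherits its unstable direction from the partially hyperbolic splitting of $f$, so that its continuation under $g$ contributes to $h^u(g)$ rather than merely $h(g)$.
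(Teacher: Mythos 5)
Your proposal is essentially correct in outline, but it takes a genuinely different route from the paper: the paper's proof of this lemma is a one-line citation to \cite[Corollaries 3.6 and 3.7]{MP23}, where the robustness of $h^u>h^s$ and the continuity of $h$ and $P(\cdot,\phi)$ on such a neighborhood were already established, whereas you rebuild the statement from scratch. Your reduction via the identities $h(g)=\max\{h^u(g),h^s(g)\}$ and $P(g,\phi)=\max\{P^u(g,\phi),P^s(g,\phi)\}$ (using the Ledrappier--Young consequence \eqref{u-entropy-metric=entropy-metric} on each ergodic piece, together with the variational principles of \cite{HHW17}) is sound, and your two-sided scheme --- upper semicontinuity of $h^u$ from the volume-growth characterization and lower semicontinuity from a Katok horseshoe attached to a measure supplied by Lemma~\ref{mbe-implies-center-lyapunov-exponent<0} --- is exactly the kind of argument that underlies the cited corollaries; what it buys is a self-contained proof and a clear view of where $h^u>h^s$ enters (it guarantees that measures with $h^u_\mu$ close to $h^u$ have central exponent uniformly negative, so the horseshoe is hyperbolic with unstable bundle $E^u$, its center is uniformly contracted, and its continuation genuinely feeds into $h^u(g)$), while the paper's citation buys brevity. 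Two soft spots you should tighten: (i) the lower-semicontinuity step needs the refined version of Katok's theorem in which all invariant measures of the horseshoe have Lyapunov exponents (and Birkhoff averages of $\phi$, for item (3)) close to those of $\mu$, so that the argument "negative central exponents on a compact invariant set with dominated splitting imply uniform contraction of $E^{cs}$" applies --- you flag this, and it is available for $C^{1+}$ maps, but it is the load-bearing ingredient; and (ii) for the pressure statements, "incorporating the potential into the volume estimates" is not established in the literature you invoke; it is cleaner to get upper semicontinuity of $P^u(\cdot,\phi)$ from the semicontinuity of $(g,\mu)\mapsto h^u_\mu(g)$ in \cite[Theorem A]{Yan21} plus weak-$*$ compactness, which is in fact the tool this paper uses elsewhere (proof of Theorem~\ref{MainTheoB}). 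Finally, note that items (2) and (3) require continuity at every $g\in\cU\cap\Diff^{1+}(M)$, not only at $f$; your argument does extend because item (1) holds at each such $g$, but this should be said explicitly.
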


\begin{proof}
    The lemma is a consequence of  \cite[Corollaries 3.6 and 3.7]{MP23}. 
\end{proof}

\section{Invariant manifolds for measures with large entropy}\label{Hyperbolic-time}
In this section, we will investigate the {measure} of the set of points with invariant manifolds of uniform size from the perspective of measures with high entropy.

%%%%%%%%%%%%%%%%%%%%%%%%%%%%%%%%%%%%%%%%%%
%%%%%%%%%%%%%%%%%%%%%%%%%%%%%%%%%%%%%%%%%%
\subsection{Invariant Manifolds with Uniform Size}

{We verified that ergodic measures with large entropy are hyperbolic measures, see Lemma \ref{mbe-implies-center-lyapunov-exponent<0}. Almost every point of a hyperbolic measure possesses stable and unstable invariant manifolds, see Section 
\ref{subsectionHyperbolicMeasures}. However, a crucial point to note is the absence of uniform control over the size of these invariant manifolds. Consequently, there may exist points that are arbitrarily close to each other such that their corresponding unstable and stable manifolds do not intersect. This suggests the potential existence of an infinite number of maximal entropy measures that are not homoclinically related. 
{To avoid this kind of problem, in this section, we construct, for  partially hyperbolic diffeomorphisms with central dimension one a set where the Pesin invariant manifolds have uniform size.}}  

Recall that  $f:M\to M$ denotes a $C^{1 +}$ partially hyperbolic diffeomorphism with $1$-dimensional central direction with non trivial stable and unstable bundles, that is, 
${\dime}(E^{\sigma})>0 (\sigma=s,u)$. Given $1>\rho\geq \xi>0$ where $\xi$ is as in Definition \ref{def-partialh}, define

\begin{equation}\label{Lambarho-definition}
\Lambda_\rho(f)=\Lambda_\rho:=\{x\in M: \forall n\geq 1 \mbox{ it holds } \|Df^n(x)_{|E^c}\|\leq \rho^n \}.    
\end{equation}

\begin{remark}\label{manifoldsexistence}
{Note that $\Lambda_\rho$ is a compact set.  Moreover $f$ is a partially hyperbolic diffeomorphism with central bundle one and such that for every $x\in \Lambda_\rho$ we have that }

$$\|Df^n_x(v)\|< \rho^n \mbox{ for every } v\in E^{c} \mbox{ and } n\geq 1$$ 
{and the splitting $E^{cs}(x)\oplus E^{u}(x)$ varies continuously. Therefore, the classical result presented in \cite{HPS70,MV20} guarantees the existence of invariant  manifolds of uniform size. More precisely, there exists $\delta_r=\delta_r(\rho)$ and $\delta_l=\delta_l(\rho)$ such that for every $x\in \Lambda_{\rho}(f)$  unstable and stable manifold have a size larger than $\delta_r$ and if $x,y\in \Lambda_{\rho}(f)$ and $d(x,y)<\delta_l$ then $W^{u/s}_{\delta_r}(x)\pitchfork W^{s/u}_{\delta_r}(y)$ is non empty. Furthermore, the invariant manifolds exhibit continuous variations concerning both the diffeomorphisms involved and the chosen points. In other words, given $f$ and $g$ are $C^{1}$ close enough in the $C^1$ topology, for any $x$ in $\Lambda_\rho(f)$ and $y$ in $\Lambda_\rho(g)$ where $x$ and $y$ are close, the invariant manifolds $W^{s/u}_{\delta_r}(x)$ are also close to $W^{s/u}_{\delta_r}(y)$ within the $C^1$ topology.}
\end{remark}

%%%%%%%%%%%%%%%%%%%%%%%%%%%%%%%%%%%%%%%%%%
%%%%%%%%%%%%%%%%%%%%%%%%%%%%%%%%%%%%%%%%%%

\subsection{Measuring $\Lambda_{\rho}$}
Now, we know that points in $\Lambda_{\rho} (f)$ have invariant manifolds of uniform size. We will use this set to prove that there exists a finite number of homoclinic classes containing measures with large entropy. 
Therefore, we aim to demonstrate that the set $\Lambda_\rho(f)$ possesses a positive measure for measures with significant entropy. To achieve our goals, we require the following version of the Pliss Lemma \cite{Pli72}.

\begin{lemma}\cite[Lemma 3.1]{CP18}\label{Pliss lemma}
    For any $\varepsilon>0$, $\alpha_1<\alpha_2$ and any sequence 
    $(a_i)\in (\alpha_1,+\infty)^{{\bN}}$ satisfying 

    $$\limsup_{n\to +\infty}{\frac{a_0+\cdots a_{n-1}}{n}}\leq \alpha_2,$$
    there exists a sequence of integers $0<n_1<n_2<\cdots $ such that

    \begin{itemize}
        \item[(a)] for any $l\geq 1$ and $n>n_l$, one has $\frac{a_{n_l}+\cdots +a_{n-1}}{n -n_l}\leq \alpha_2+\varepsilon$,
        \item[(b)] the upper density $\limsup \frac{l}{n_l}$ is larger than $\frac{\varepsilon}{\alpha_2+\varepsilon-\alpha_1}$.
    \end{itemize}
\end{lemma}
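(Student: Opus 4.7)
The plan is to pass to the partial sums $T_n := \sum_{j=0}^{n-1}(a_j-(\alpha_2+\varepsilon))$ and identify the indices $n_l$ demanded by the lemma with ``right-maxima'' of the sequence $(T_n)$. Writing $b_j := a_j-(\alpha_2+\varepsilon)$, the hypothesis $a_j > \alpha_1$ becomes $b_j > -\sigma$ with $\sigma := \alpha_2+\varepsilon-\alpha_1 > 0$, while the asymptotic upper bound on the averages of $(a_j)$ becomes $\limsup_n T_n/n \leq -\varepsilon$; in particular $T_n \to -\infty$, so everything below makes sense.

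Call $n$ \emph{good} when $T_m \leq T_n$ for every $m>n$. For such $n$ the inequality $\sum_{j=n}^{m-1} b_j\leq 0$ rearranges to $(a_n+\cdots+a_{m-1})/(m-n)\leq \alpha_2+\varepsilon$, so taking $(n_l)$ to be the list of good indices in increasing order makes condition (a) automatic. For the density estimate (b) I would introduce the non-increasing right envelope $\bar T_n := \sup_{m\geq n} T_m$, which is finite since $T_m\to -\infty$. An index $n$ is a strict right-maximum precisely when $\bar T_n>\bar T_{n+1}$, and at such an $n$ one has $\bar T_n=T_n$, so $\bar T_n-\bar T_{n+1} = T_n-\bar T_{n+1} \leq T_n-T_{n+1} = -b_n < \sigma$.

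Telescoping now gives $\bar T_0-\bar T_N = \sum_{n<N}(\bar T_n-\bar T_{n+1})$, a sum of non-negative terms each strictly less than $\sigma$ and each indexed by a strictly good $n$. Using $\bar T_0 \geq T_0 = 0$ together with the bound $\bar T_N \leq (-\varepsilon+\delta)N$, which holds for any $\delta>0$ once $N$ is large enough (because $T_m\leq (-\varepsilon+\delta)m$ for $m$ large and the function $m\mapsto(-\varepsilon+\delta)m$ is decreasing in $m$, so its supremum over $m\geq N$ equals $(-\varepsilon+\delta)N$), one obtains $\bar T_0-\bar T_N \geq (\varepsilon-\delta)N$. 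Dividing by $\sigma$ shows that the number $l_N$ of good indices in $[0,N]$ satisfies $l_N \geq (\varepsilon-\delta)N/\sigma$, and since $n_{l_N}\leq N$ this yields $l_N/n_{l_N} \geq (\varepsilon-\delta)/\sigma$. Letting $\delta\to 0$ along an appropriate sequence $N\to\infty$ produces the required $\limsup_l l/n_l \geq \varepsilon/(\alpha_2+\varepsilon-\alpha_1)$.

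The only real care needed is the bookkeeping of strict versus non-strict right-maxima (only strict drops of $\bar T$ contribute to the telescoping, and ties between $T_n$ and later $T_m$ must be handled carefully), together with the transition from the asymptotic averaged bound $\limsup T_m/m\leq -\varepsilon$ to the uniform bound on $\bar T_N$ used above; both are essentially elementary once the right envelope has been introduced, and the whole argument reduces the problem to the fact that $\bar T$ can only drop finitely many times by less than $\sigma$ each time.
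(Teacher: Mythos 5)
Your argument is correct. Note first that the paper does not prove this lemma at all — it is quoted verbatim from Crovisier--Pujals \cite[Lemma 3.1]{CP18} — so there is nothing internal to compare against; what you give is the standard ``record times of the drifted partial sums'' proof of Pliss-type statements, and all the steps check out: the indices $n$ with $T_m\le T_n$ for all $m>n$ satisfy (a) by definition of $T$, each strict drop of the right envelope $\bar T$ is bounded by $\sigma=\alpha_2+\varepsilon-\alpha_1$ because $a_n>\alpha_1$, and the telescoping together with $\bar T_N\le(-\varepsilon+\delta)N$ gives the density estimate. Three cosmetic points you should tidy up: you need $\delta<\varepsilon$ for the map $m\mapsto(-\varepsilon+\delta)m$ to be decreasing (otherwise the identity $\sup_{m\ge N}(-\varepsilon+\delta)m=(-\varepsilon+\delta)N$ fails); the lemma asks for $n_1>0$, so discard the index $0$ if it happens to be a record time (this changes neither (a) nor the upper density); and your argument yields $\limsup l/n_l\ \ge\ \varepsilon/(\alpha_2+\varepsilon-\alpha_1)$ rather than the literal strict ``larger than'' of the statement — this is harmless, since both the original reference and this paper's only use of the lemma (the chain of inequalities in Lemma \ref{Hyerbolictime}, where one concludes $\mu(\Lambda_\rho)\ge\limsup l/n_l\ge\frac{\log\rho-\lambda_c(f,\mu)}{\log\rho-m(f)}>0$) only require the non-strict bound.
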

 We define $m(f):=\min_{x\in M}\{\log\|Df(x)_{|E^c_x}\|\}$. 
 Note that $m(f)$ {is negative because there are points with central Lyapunov exponent negative} and it  varies continuously with $f$ on the $C^1$ topology.

\begin{lemma}\label{Hyerbolictime}
    Suppose that $h^u(f)>h^s(f)$. If $\mu\in \cM_f^e$ with $h_\mu(f)>h^s(f)$ then for every $e^{\lambda_c(f,\mu)}<\rho<1$ we have that $\mu(\Lambda_\rho)\geq\frac{\log(\rho)-\lambda_c(f,\mu)}{\log (\rho)-m(f)}.$
\end{lemma}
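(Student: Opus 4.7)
The plan is to reduce the statement to Pliss's lemma (Lemma \ref{Pliss lemma}) applied along a $\mu$-generic orbit, and then to convert the resulting density of Pliss times into a lower bound on $\mu(\Lambda_\rho)$ via Birkhoff's ergodic theorem.

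First, Lemma \ref{mbe-implies-center-lyapunov-exponent<0} guarantees $\lambda_c(f,\mu) < 0$, so the admissible range $e^{\lambda_c(f,\mu)} < \rho < 1$ is non-empty and $\log \rho - \lambda_c(f,\mu) > 0$. Since the center bundle is one-dimensional, the chain rule yields a scalar identity: setting $a_i(x) := \log \|Df(f^i(x))|_{E^c}\|$, one has $\log \|Df^n(x)|_{E^c}\| = \sum_{i=0}^{n-1} a_i(x)$. Consequently, $f^k(x) \in \Lambda_\rho$ is equivalent to $\sum_{i=k}^{k+m-1} a_i(x) \leq m \log \rho$ for every $m \geq 1$, which is exactly the hyperbolic-time condition for the exponent $\log \rho$.

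Next, for $\mu$-a.e.\ $x$, I would apply Lemma \ref{Pliss lemma} to the sequence $(a_i(x))_{i\geq 0}$ with $\alpha_1 := m(f) - \delta$ (for an arbitrary $\delta > 0$, so that $a_i(x) \geq m(f) > \alpha_1$ holds automatically), $\alpha_2 := \lambda_c(f,\mu)$ (attained as the Birkhoff average by the ergodic theorem), and $\varepsilon := \log \rho - \lambda_c(f,\mu) > 0$. The essential calibration is $\alpha_2 + \varepsilon = \log \rho$: conclusion (a) of Pliss at each Pliss time $n_l$ then reads $\sum_{i=n_l}^{n_l+m-1} a_i(x) \leq m \log \rho$ for every $m \geq 1$, i.e.\ $f^{n_l}(x) \in \Lambda_\rho$. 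Conclusion (b) supplies the density estimate $\limsup_{l} l/n_l \geq (\log \rho - \lambda_c(f,\mu))/(\log \rho - m(f) + \delta)$.

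Finally, Birkhoff's ergodic theorem applied to the indicator $\chi_{\Lambda_\rho}$ shows that $\mu(\Lambda_\rho)$ coincides, $\mu$-a.e., with the asymptotic frequency of visits of the forward orbit to $\Lambda_\rho$; since every Pliss time $n_l$ is such a visit, this frequency dominates $\limsup_{l} l/n_l$, and letting $\delta \to 0$ yields the claimed inequality $\mu(\Lambda_\rho) \geq (\log \rho - \lambda_c(f,\mu))/(\log \rho - m(f))$. The one mild technicality, namely the strict lower bound $a_i > \alpha_1$ required in the statement of Pliss's lemma, is precisely what the $\delta$-perturbation absorbs, so no genuine obstacle arises; the argument is essentially a calibrated composition of the two named lemmas.
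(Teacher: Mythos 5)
Your proposal is correct and follows essentially the same route as the paper: apply the Pliss lemma to the scalar sequence $\log\|Df(f^j(x))|_{E^c}\|$ along a $\mu$-generic orbit with $\alpha_2=\lambda_c(f,\mu)$, $\varepsilon=\log\rho-\lambda_c(f,\mu)$ (so $\alpha_2+\varepsilon=\log\rho$), identify the Pliss times as visits to $\Lambda_\rho$, and pass the density bound to $\mu(\Lambda_\rho)$ via Birkhoff. The only difference is your replacement of $\alpha_1=m(f)$ by $m(f)-\delta$ to respect the strict inequality in the hypothesis of the Pliss lemma, a minor refinement of the paper's argument that changes nothing after letting $\delta\to 0$.
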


\begin{proof}
    Let $\mu \in \cM_f$ be an ergodic measure with $h_\mu(f)>h^s(f)$. Since $h^u(f)>h^s(f)$ and $h_\mu(f)> h^s(f)$, Lemma \ref{mbe-implies-center-lyapunov-exponent<0} implies that $\lambda_c(f,\mu)<h^s(f)-h_\mu(f)<0$. Now, let $0<e^{\lambda_c(f,\mu)}<\rho<1$ and define the set
    \[
    \Lambda(f)=\Lambda_\rho\cap \mathcal{R} \cap \Lambda_\mu,
    \]
    where $\mathcal{R}$ is the Lyapunov regular set and $\Lambda_\mu$ are the points in $M$ with central Lyapunov exponent equal to $\lambda_c(f,\mu)$. Note that $\mu(\Lambda(f))=\mu(\Lambda_\rho)$.

    For every $x\in \Lambda(f)$, we have that
    \[
\begin{aligned}
\lim_{n\to \infty}\frac{1}{n}\log(\|Df^n(x)_{|E^c_x}\|) &= \lim_{n\to \infty}\frac{1}{n}\sum_{j=0}^{n-1}\log(\|Df(f^j(x))_{|E^c(f^j(x))}\|) \\
&= \lambda_c(f,\mu).
\end{aligned}
\]

    We will now apply Lemma \ref{Pliss lemma}. Take $\varepsilon=\log(\rho)-\lambda_c(f,\mu)$, $\alpha_1=m(f)$, $\alpha_2=\lambda_c(f,\mu)$, and consider the sequence 
    \[
    (\log(\|Df(f^j(x))_{|E^c(f^j(x))}\|))_{j\in {\bN}}.
    \]

    This results in a sequence of integers $(n_l)_{l\in {\bN}}$ such that for each $l\in {\bN}$ and $n>n_l$, we find that
    
    $$
    \frac{1}{n-n_l}\sum_{j=n_l}^{n-1}\log\|Df({f^j(x))_{|E^c_{f^j(x)}}}\|<\log(\rho).
    $$

    Consequently, we obtain $\|Df^n(f^{n_l}(x))_{|E^c(f^{n_l}(x))}\|<\rho^n$ for every $n\in {\bN}$. Thus, for each $l$, $f^{n_l}(x)\in \Lambda(f)$. Applying Birkhoff's theorem and the Pliss Lemma, we arrive at the following estimate:
    \begin{equation}
        \mu(\Lambda_\rho)\geq \limsup_{l\to \infty} \frac{l}{n_l}\geq\frac{\log(\rho)-\lambda_c(f,\mu)}{\log (\rho)-m(f)}>0.
    \end{equation}
\end{proof}

\begin{clly}\label{HyperbolictimePressure}
     Let $\phi:M\to {\bR}$ be a continuous potential  such that $h^u(f)>\sup \phi>\inf\phi>h^s(f)$. 
     If $\mu$ is an ergodic measure such that $h_\mu(f)+\int \phi d\mu>h^s(f)+\sup \phi$  then for every $e^{\lambda_c(f,\mu)}<\rho<1$ we have that 
     $$\mu(\Lambda_\rho)\geq\frac{\log(\rho)-\lambda_c(f,\mu)}{\log (\rho)-m(f)}.$$
\end{clly}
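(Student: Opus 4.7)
The plan is to reduce the corollary to Lemma \ref{Hyerbolictime} by showing that the pressure hypothesis $h_\mu(f) + \int \phi\, d\mu > h^s(f) + \sup \phi$ forces the entropy hypothesis $h_\mu(f) > h^s(f)$, after which the conclusion on $\mu(\Lambda_\rho)$ transfers directly. The strategy mirrors the argument in Corollary \ref{mbp-implies-center-lyapunov-exponent<0}, where the same kind of reduction from pressure to entropy is used.

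First I would observe that, from the standing assumption $h^u(f) > \sup \phi > \inf \phi > h^s(f)$, we trivially obtain $h^u(f) > h^s(f)$, so the hypotheses of Lemma \ref{Hyerbolictime} on the diffeomorphism $f$ are satisfied. Next, since $\mu$ is a probability measure we have $\int \phi\, d\mu \leq \sup \phi$, and hence
\begin{equation*}
h_\mu(f) \;>\; h^s(f) + \sup\phi - \int \phi\, d\mu \;\geq\; h^s(f).
\end{equation*}
Thus $\mu$ is an ergodic measure with $h_\mu(f) > h^s(f)$, exactly the hypothesis of Lemma \ref{Hyerbolictime}.

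Applying that lemma yields, for every $e^{\lambda_c(f,\mu)} < \rho < 1$,
\begin{equation*}
\mu(\Lambda_\rho) \;\geq\; \frac{\log(\rho) - \lambda_c(f,\mu)}{\log(\rho) - m(f)},
\end{equation*}
which is the desired estimate. There is no real obstacle here: the content of the corollary is simply the observation that the pressure hypothesis is strictly stronger than the entropy hypothesis of Lemma \ref{Hyerbolictime}, so no new dynamical input is required. The only point worth underlining in the write-up is that the strictness $\sup\phi \geq \int \phi\, d\mu$ (rather than equality) is not needed for the lower bound on $\mu(\Lambda_\rho)$; what matters is that the admissible range for $\rho$ is governed by $\lambda_c(f,\mu)$, which by Corollary \ref{mbp-implies-center-lyapunov-exponent<0} is indeed negative and in fact bounded above by $h^s(f) - h_\mu(f)$.
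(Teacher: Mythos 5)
Your proposal is correct and follows exactly the paper's own argument: the pressure hypothesis together with $\int \phi\, d\mu \leq \sup\phi$ gives $h_\mu(f) > h^s(f)$ (as in Corollary \ref{mbp-implies-center-lyapunov-exponent<0}), and then Lemma \ref{Hyerbolictime} applies verbatim. No discrepancies to report.
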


\begin{proof}
   As in the proof of Corollary \ref{mbp-implies-center-lyapunov-exponent<0}, the inequality $h_\mu(f)+\int \phi d\mu>h^s(f)+\sup \phi$ implies 
$$h_\mu(f)>h^s(f)$$
so, by Lemma \ref{Hyerbolictime} we have  that  
$$\mu(\Lambda_\rho)\geq\frac{\log(\rho)-\lambda_c(f,\mu)}{\log (\rho)-m(f)}.$$
\end{proof}

\begin{clly}\label{Hyperbolictime-uniform-measure}
   Suppose that  $h^u(f)>h^s(f)$. For every $h^u(f)>a>h^s(f)$ and $\frac{h^s(f)-a}{2}<\log(\rho)<0$ there exist a $C^1$ neighborhood $\cV$ of $f$ and a real number $\beta>0$ such that for every $g\in \cV \cap {\Diff}^{1+}(M)$ holds.
    \begin{enumerate}
        \item[(a)]\label{4.5a}  $h^u(g)>\frac{h^u(f)+a}{2}$ and  $h^s(g)<\frac{h^s(f)+a}{2}$,
        \item[(b)] If $\mu\in \cM_{g}^e$ with $h_\mu(g)\geq a$, then  $\mu(\Lambda_\rho(g))\geq \beta$.
    \end{enumerate}   
\end{clly}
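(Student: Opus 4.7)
The plan is to split the statement into its two parts and funnel everything through Lemma \ref{h^u(g)>h^s(g)} and Lemma \ref{Hyerbolictime}, with the uniform estimate produced by combining the hypothesis on $\rho$ with $C^1$-continuity of $m(\cdot)$.

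First I would set $a_1:=(h^u(f)+a)/2$ and $a_2:=(h^s(f)+a)/2$, so that $h^u(f)>a_1>a>a_2>h^s(f)$. Lemma \ref{h^u(g)>h^s(g)} then furnishes a $C^1$ neighborhood $\cV_1$ of $f$ such that every $g\in \cV_1\cap \Diff^{1+}(M)$ satisfies $h^u(g)>a_1=(h^u(f)+a)/2$ and $h^s(g)<a_2=(h^s(f)+a)/2$; this is exactly item (a).

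For item (b), fix $g\in \cV_1\cap \Diff^{1+}(M)$ and $\mu\in \cM_g^e$ with $h_\mu(g)\geq a$. Since $h^u(g)>h^s(g)$ and $h_\mu(g)\geq a>a_2>h^s(g)$, Lemma \ref{mbe-implies-center-lyapunov-exponent<0} applied to $g$ yields
\[
\lambda_c(g,\mu)\;\leq\; h^s(g)-h_\mu(g) \;<\; a_2 - a \;=\; \tfrac{h^s(f)-a}{2} \;<\; \log\rho \;<\; 0,
\]
so in particular $e^{\lambda_c(g,\mu)}<\rho<1$. Hence Lemma \ref{Hyerbolictime} applies to $(g,\mu)$ and gives
\[
\mu(\Lambda_\rho(g))\;\geq\;\frac{\log\rho-\lambda_c(g,\mu)}{\log\rho-m(g)}.
\]

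It remains to make this lower bound uniform. The numerator is bounded below by $\log\rho-\tfrac{h^s(f)-a}{2}>0$ by the chain of inequalities above, and this bound is independent of $g$ and $\mu$. For the denominator, $g\mapsto m(g)$ is continuous in the $C^1$ topology, so I may choose a smaller $C^1$ neighborhood $\cV\subset \cV_1$ of $f$ on which $m(g)\geq m(f)-1$, giving $\log\rho-m(g)\leq \log\rho-m(f)+1$. Setting
\[
\beta \;:=\; \frac{\log\rho-\tfrac{h^s(f)-a}{2}}{\log\rho-m(f)+1} \;>\; 0
\]
then yields $\mu(\Lambda_\rho(g))\geq \beta$ for every $g\in \cV\cap \Diff^{1+}(M)$ and every ergodic $\mu$ for $g$ with $h_\mu(g)\geq a$, as required.

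No substantial obstacle is expected: the proof is essentially a matter of collecting the $g$-dependent quantities $h^s(g)$, $\lambda_c(g,\mu)$ and $m(g)$ appearing in Lemma \ref{Hyerbolictime} and replacing each by a uniform bound, via Lemma \ref{h^u(g)>h^s(g)} for the entropies and $C^1$-continuity of $m$ for the minimal central expansion rate. The one delicate point to keep track of is that the entropy gap $a-a_2=(a-h^s(f))/2$ is strictly positive and independent of $g$ and $\mu$; this is precisely what forces $\lambda_c(g,\mu)$ to remain uniformly below $\log\rho$, thereby making the resulting lower bound on $\mu(\Lambda_\rho(g))$ uniform in $\cV$.
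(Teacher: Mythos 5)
Your proof is correct and follows essentially the same route as the paper: item (a) via Lemma \ref{h^u(g)>h^s(g)} with $a_1=\frac{h^u(f)+a}{2}$, $a_2=\frac{h^s(f)+a}{2}$, then Lemma \ref{mbe-implies-center-lyapunov-exponent<0} to force $\lambda_c(g,\mu)<\frac{h^s(f)-a}{2}<\log\rho$, Lemma \ref{Hyerbolictime} for the measure bound, and $C^1$-continuity of $m(\cdot)$ to extract a uniform $\beta$. Your explicit choice $\beta=\bigl(\log\rho-\tfrac{h^s(f)-a}{2}\bigr)/\bigl(\log\rho-m(f)+1\bigr)$ just makes concrete the paper's appeal to continuity of $m$; there is no substantive difference.
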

\begin{proof}
For the item (a), it is enough to consider $a_1=\frac{h^u(f)+a}{2}$ and $a_2=\frac{h^s(f)+a}{2}$ in Lemma \ref{h^u(g)>h^s(g)}.

For the item (b), let $\mu\in \cM_{g}^e$ such that $h_\mu(g)\geq a$. By Lemma \ref{mbe-implies-center-lyapunov-exponent<0}, we have:
\begin{equation}\label{Cor4.4eq1}
\lambda_c(g,\mu) \leq h^s(g)-h_\mu(g).  
\end{equation}
Using the first item, we also have:
\begin{equation}\label{Cor4.4eq2}
h^s(g)-h_\mu(g)<\frac{h^s(f)+a}{2}-h_\mu(g).
\end{equation}
Since  \eqref{Cor4.4eq1}, \eqref{Cor4.4eq2}  and $h_\mu(g)\geq a$, it follows:
\begin{equation*}
\lambda_c(g,\mu)<\frac{h^s(f)-a}{2}.
\end{equation*}
Consider $\log(\rho)>\frac{h^s(f)-a}{2}$, so, $e^{\lambda_c(g,\mu)}<\rho<0$. 
Therefore, by Lemma \ref{Hyerbolictime}, we have:
$$\mu(\Lambda_\rho(g))\geq \frac{\log(\rho)-\lambda_c(g,\mu)}{\log (\rho)-m(g)}\geq \frac{\log(\rho)-\frac{h^s(f)-a}{2}}{\log(\rho)-m(g)}>0.$$
Note that since $m(g)$ varies continuously in the $C^1$ topology, we can choose $\beta>0$ such that:
$$\frac{\log(\rho)-\frac{h^s(f)-a}{2}}{\log (\rho)-m(g)}>\beta$$
for every $g\in \cV\cap {\Diff}^{1+}(M)$, where ${\cV}$ is an open neighborhood of $f$ as in item (a) and with small size such that $m(g)$ is close to $m(f)$.  Thus, we conclude the proof.
\end{proof}

\begin{clly}\label{measures-uniform-pressure}
   Let $\phi:M\to \mathbb{R}$ be a continuous potential such that $h^u(f)>\sup \phi>\inf\phi>h^s(f)$. 
   For every $a$ such that  $h^u(f)+\inf \phi>a>h^s(f)+\sup \phi$  and $\frac{h^s(f)-a+\inf \phi}{2}<\log(\rho)<1$
  it holds
   \begin{enumerate} 
        \item [(a)] For every $g\in \cV$, 
        $$h^u(g)+\inf \phi>\frac{h^u(f)+\inf \phi+a}{2} \quad \mbox{and}\quad h^s(g)+\sup \phi<\frac{h^s(f)+\sup \phi+a}{2},$$
        \item[(b)] If $\mu\in \cM_{g}^e$ with $P_\mu(g)\geq a$, then  $\mu(\Lambda_\rho(g))\geq \beta$.
    \end{enumerate}   
\end{clly}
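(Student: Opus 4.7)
The plan is to mirror the proof of Corollary \ref{Hyperbolictime-uniform-measure}, replacing entropic bounds by their pressure counterparts and carefully tracking where $\sup\phi$ and $\inf\phi$ enter.

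For item (a), I will apply Lemma \ref{h^u(g)>h^s(g)} with the choices $a_1 := \frac{h^u(f) + a - \inf\phi}{2}$ and $a_2 := \frac{h^s(f) + a - \sup\phi}{2}$. The three required inequalities $h^u(f) > a_1 > a_2 > h^s(f)$ reduce respectively to $h^u(f) + \inf\phi > a$, to $h^u(f) - h^s(f) > \inf\phi - \sup\phi$ (immediate since the left side is positive and the right is nonpositive), and to $a > h^s(f) + \sup\phi$; the outer two are part of the hypothesis on $a$. Lemma \ref{h^u(g)>h^s(g)} then yields a $C^1$-neighborhood $\cV$ of $f$ on which $h^u(g) > a_1$ and $h^s(g) < a_2$, which is precisely the statement of (a) after adding $\inf\phi$ and $\sup\phi$ respectively.

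For item (b), I will fix $g \in \cV \cap \Diff^{1+}(M)$ and $\mu \in \cM_g^e$ with $P_\mu(g) \geq a$. Using $\int\phi\,d\mu \leq \sup\phi$ gives $h_\mu(g) \geq a - \sup\phi$, which by item (a) already strictly exceeds $h^s(g) < \tfrac{h^s(f)+a-\sup\phi}{2}$. Since item (a) also guarantees $h^u(g) > h^s(g)$, Lemma \ref{mbe-implies-center-lyapunov-exponent<0} applies to the pair $(g,\mu)$ and yields
\[
\lambda_c(g,\mu) \;\leq\; h^s(g) - h_\mu(g) \;<\; \tfrac{h^s(f) + a - \sup\phi}{2} - (a - \sup\phi) \;=\; \tfrac{h^s(f) - a + \sup\phi}{2}.
\]
Provided $\log\rho$ exceeds this upper bound (which the hypothesis on $\rho$ is intended to ensure), one has $e^{\lambda_c(g,\mu)} < \rho < 1$, and Lemma \ref{Hyerbolictime} applied to $(g,\mu)$ (equivalently, Corollary \ref{HyperbolictimePressure}) delivers
\[
\mu(\Lambda_\rho(g)) \;\geq\; \frac{\log\rho - \lambda_c(g,\mu)}{\log\rho - m(g)} \;\geq\; \frac{\log\rho - \tfrac{h^s(f) - a + \sup\phi}{2}}{\log\rho - m(g)}.
\]

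The final step is to make the lower bound uniform. Since the map $g \mapsto m(g)$ is continuous on $\Diff^{1+}(M)$ in the $C^1$ topology, shrinking $\cV$ if necessary confines $m(g)$ to a small interval around $m(f)$, and the ratio above is then bounded below by a positive constant $\beta$ independent of the particular $g \in \cV \cap \Diff^{1+}(M)$ and of $\mu$. The proof is essentially bookkeeping; the only mild obstacle is keeping track of which extremum of $\phi$ appears in each inequality, especially in comparing $\log\rho$ with the sharpened upper bound on $\lambda_c(g,\mu)$, and otherwise the argument reproduces that of Corollary \ref{Hyperbolictime-uniform-measure} step by step.
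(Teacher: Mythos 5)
Your proposal is correct and follows essentially the same route as the paper: item (a) via Lemma \ref{h^u(g)>h^s(g)} with suitable thresholds $a_1,a_2$, and item (b) by bounding $\lambda_c(g,\mu)$ through $h_\mu(g)\geq a-\sup\phi$ together with (a), then invoking the Pliss-type estimate of Lemma \ref{Hyerbolictime}/Corollary \ref{HyperbolictimePressure} and the continuity of $m(g)$ to extract a uniform $\beta$. Your bookkeeping is in fact slightly cleaner than the paper's (your $a_1,a_2$ directly yield the stated inequalities, and you correctly flag that the working threshold for $\log\rho$ is $\tfrac{h^s(f)+\sup\phi-a}{2}$, exactly the condition the paper imposes mid-proof).
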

\begin{proof}
For  item (a), it is enough to consider $a_1=\frac{h^u(f)+\inf \phi+a}{2}$ and $a_2=\frac{h^s(f)+\sup \phi+a}{2}$ in Lemma \ref{h^u(g)>h^s(g)}.

For item (b), let $g\in \cV\cap {\Diff}^{1+}(M)$ and $\mu \in\cM_g$ such that $P_\mu(f)\geq a$, where ${\cV}$ is the neighborhood of $f$ given by item $(a)$. 
By Corollary \ref{mbp-implies-center-lyapunov-exponent<0}, we have:
\begin{equation}\label{Coro46eq1}
\lambda_c(g,\mu)\leq h^u(g)-h_\mu(g).
\end{equation}
Now, on one side, since $g\in \cV\cap {\Diff}^{1+}(M)$, we have $h^s(g)+\sup \phi<\frac{h^s(f)+a+\sup \phi}{2}$ and therefore:
\begin{equation}\label{Coro46eq2}
h^s(g)<\frac{h^s(f)+a-\sup \phi}{2}.
\end{equation}
On the other side, as $h_\mu(f)+\int \phi d\mu\geq a$ and $\sup \phi\geq \int \phi d \mu $, we have:
\begin{equation}\label{Coro46eq3}
h_\mu(f)\geq a-\sup \phi.
\end{equation}
Applying \eqref{Coro46eq2} and \eqref{Coro46eq3} in \eqref{Coro46eq1} we get:
\begin{equation*}
\lambda_c(g,\mu) < \frac{h^s(f)+\sup \phi-a}{2}.
\end{equation*}
Now, consider $\log(\rho)>\frac{h^s(f)+\sup \phi-a}{2}$. So, $e^{\lambda_c(g,\mu)}<\rho<1$, and therefore, by Corollary \ref{HyperbolictimePressure}:
$$\mu(\Lambda_\rho(g))\geq \frac{\log(\rho)-\lambda_c(g,\mu)}{\log (\rho)-m(g)}\geq \frac{\log(\rho)-\frac{h^s(f)+\sup \phi-a}{2}}{\log (\rho)-m(g)}.$$
Note that since $m(g)$ varies continuously in the $C^1$ topology, we can choose $\beta>0$ such that:
$$ \frac{\log(\rho)-\frac{h^s(f)+\sup \phi-a}{2}}{\log (\rho)-m(g)}>\beta$$
for every $g\in \cV\cap {\Diff}^{1+}(M)$, reducing the size of the neighborhood $\cV$ when necessary. Thus, we conclude the proof.
\end{proof}
%%%%%%%%%%%%%%%%%%%%%%%%%%%%%%%%%%%%%%%
%%%%%%%%%%%%%%%%%%%%%%%%%%%%%%%%%%%%%%%
\section{Proof of Theorems \ref{MainTheoA} and \ref{MainTheoB} }\label{Proof-A-B} 
In this section, we prove theorems \ref{MainTheoA} and \ref{MainTheoB}. 
Recall that $f:M\to M$ denotes a $C^{1+}$ partially hyperbolic diffeomorphism of a compact manifold $M$ with $TM=E^u \oplus E^c \oplus E^s $ and ${\dime}(E^c)=1$.
% {By Remark \ref{h^u(f)=h^s(f^{-1})} it is enough to consider $h^u(f)>h^s(f)$. } 
\begin{proof}[Proof of  Theorem A]
 Let $a>0$ be a real number such that $h^u(f)\geq a> h^s(f)$. By Lemma \ref{mbe-implies-center-lyapunov-exponent<0}, we have that for every ergodic measure $\mu$ with $h_\mu(f)\geq a$, it holds:
$$\lambda_c(f,\,u)\leq h^s-a<0.$$

Fix $e^{h^s(f)-a}<\rho <1$ and consider $\Lambda_\rho\subset M$, and $\delta_l>0$ as in Definition \ref{def-partialh} and Remark \ref{manifoldsexistence} respectively.

\begin{claim}\label{NumberOfHC}
    There exist $K_a\in \mathbb{N}$ and finitely many ergodic measures $\mu_1,\cdots \mu_{K_a}$  with $h_{\mu_i}(f)\geq a$ such that if $\mu\in\cM_f^e$ with $h_\mu(f)\geq a$, then $\mu\in HC(\mu_i)$ for some $i\leq K_a$.
\end{claim}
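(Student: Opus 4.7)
The plan is to use the compactness of $\Lambda_\rho$ together with Corollary \ref{Hyperbolictime-uniform-measure} (really its non-uniform version, Lemma \ref{Hyerbolictime}) to force every high-entropy ergodic measure to charge a finite cover of $\Lambda_\rho$, and then show that measures which charge a common small ball are necessarily homoclinically related. Finiteness of the cover will bound the number of homoclinic classes.

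First I would choose $\rho$ with $e^{h^s(f)-a}<\rho<1$. Since any ergodic $\mu$ with $h_\mu(f)\geq a>h^s(f)$ is hyperbolic of $s$-index $\dime(E^s)+1$ and satisfies $\lambda_c(f,\mu)\leq h^s(f)-a$ by Lemma \ref{mbe-implies-center-lyapunov-exponent<0}, we have $e^{\lambda_c(f,\mu)}<\rho$. Applying Lemma \ref{Hyerbolictime} yields a uniform lower bound
\[
\mu(\Lambda_\rho)\;\geq\;\frac{\log\rho-\lambda_c(f,\mu)}{\log\rho-m(f)}\;\geq\;\frac{\log\rho-(h^s(f)-a)}{\log\rho-m(f)}\;=:\beta\;>\;0
\]
valid simultaneously for every ergodic $\mu$ with $h_\mu(f)\geq a$.

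Next, since $\Lambda_\rho$ is compact, cover it by finitely many open balls $B(x_1,\delta_l/2),\dots,B(x_N,\delta_l/2)$ with $x_k\in\Lambda_\rho$, where $\delta_l$ is the uniform radius of Remark \ref{manifoldsexistence}. For every ergodic $\mu$ with $h_\mu(f)\geq a$, the bound $\mu(\Lambda_\rho)\geq\beta>0$ forces the existence of some index $k(\mu)$ with $\mu(\Lambda_\rho\cap B(x_{k(\mu)},\delta_l/2))>0$. Define, on the set of such measures, the relation $\mu\asymp\nu$ meaning $\mu$ and $\nu$ share a common such index; its transitive closure produces at most $N$ equivalence classes.

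The core step is to verify that $\mu\asymp\nu$ implies $\mu\sim\nu$ in the sense of Definition \ref{def-HR-measures}. Suppose $\mu(\Lambda_\rho\cap B(x_k,\delta_l/2))>0$ and $\nu(\Lambda_\rho\cap B(x_k,\delta_l/2))>0$, and set $\Lambda_1=\Lambda_2:=\Lambda_\rho\cap B(x_k,\delta_l/2)$. For any $(y,z)\in\Lambda_1\times\Lambda_2$ we have $d(y,z)<\delta_l$ with $y,z\in\Lambda_\rho$; by Remark \ref{manifoldsexistence} the uniform $cs$-manifold at $y$ and the uniform $u$-manifold at $z$ meet transversally inside $W^{+}(y)\pitchfork W^{-}(z)$ (the Pesin manifolds at $y,z$ contain the respective uniform local manifolds because the $E^c$-contraction rate along forward and backward orbits is controlled by $\rho$ on $\Lambda_\rho$). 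This gives $\mu\preceq\nu$; swapping the roles of $y$ and $z$ gives $\nu\preceq\mu$, hence $\mu\sim\nu$. Combining with the transitivity of $\sim$ (Proposition 2.11 of \cite{BCS22}), all measures in one $\asymp$-class lie in a single homoclinic class of measures. Picking one representative $\mu_i$ from each of the at most $N$ classes yields the claim with $K_a\leq N$.

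The only delicate point is the third step, where one must identify the Pesin invariant manifolds $W^{\pm}$ at points of $\Lambda_\rho$ with the uniformly sized manifolds produced by the standard graph-transform argument of Remark \ref{manifoldsexistence}; this identification is forced by the defining uniform contraction in $E^c$ along forward orbits inside $\Lambda_\rho$ (and along backward orbits by partial hyperbolicity on $E^u$), so the intersection supplied by Remark \ref{manifoldsexistence} indeed realises $W^{+}(y)\pitchfork W^{-}(z)\neq\emptyset$.
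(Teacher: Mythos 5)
Your proposal is correct and follows essentially the same route as the paper: Lemma \ref{Hyerbolictime} gives $\mu(\Lambda_\rho)>0$ for every high-entropy ergodic measure, Remark \ref{manifoldsexistence} provides the uniform-size invariant manifolds whose transverse intersections yield the homoclinic relation for measures charging nearby points of $\Lambda_\rho$, and compactness bounds the number of classes. The only cosmetic difference is that you exploit compactness through a fixed finite cover of $\Lambda_\rho$ (giving an explicit bound $K_a\leq N$), whereas the paper argues by the sequential pigeonhole that any infinite family of such measures contains two with $\delta_l$-close base points.
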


    \begin{proof}
By Lemma \ref{Hyerbolictime}, for every ergodic $\mu\in \cM_f^e$ with $h_\mu(f)\geq a$, we have:

$$\mu(\Lambda_\rho)> 0.$$

Thus, for every ergodic measure $\mu$ of $f$ with $h_\mu(f)\geq a$, there exists $x_{\mu}\in \Lambda_\rho$ such that:

\begin{equation}\label{eq5.1.1}
    \mu(B_{\frac{\delta_l}{3}}(x_{\mu})\cap \Lambda_\rho)>0.
\end{equation}

Now, let $(\mu_j)_{j\in \bN}$ be a sequence of distinct  ergodic measures of $f$ such that $h_{\mu_j}(f)\geq a$ for every ${j\in \bN}$ and choose $x_{\mu_j}$ such that  \eqref{eq5.1.1} holds. 
Due to the compactness of $M$, there are $n,\, m\,\in\bN, n\neq m,$ such that:

$$d(x_{\mu_{n}},x_{\mu_{m}})<\frac{\delta_l}{3}.$$

By Remark \ref{manifoldsexistence}, for $\mu_{n}$-almost every  $x_1\in B_{\frac{\delta_l}{3}}(x_{\mu_{n}})\cap \Lambda_\rho$ and $\mu_{m}$-almost every $x_2\in B_{\frac{\delta_l}{3}}(x_{\mu_{m}})\cap \Lambda_\rho$, we obtain that $W^{u/s}(x_1)\pitchfork W^{s/u}(x_2)$ are both not empty, and therefore, $\mu_{n}\sim \mu_{m}$.

Therefore, there is only a finite number of homoclinic classes in the set of hyperbolic measures containing measures with metric entropy greater than or equal to $a$. Finally, since  every ergodic measure with metric entropy  greater or equal than $a$ respect $f$ is a hyperbolic measure, we can choose $\mu_1, \ldots, \mu_{K_a}$ as representatives of each distinct homoclinic class, each having entropy greater than or equal to $a$. Thus, we conclude the proof of the claim.
\end{proof}
     
To finish the proof, we simply choose $p_1, \ldots, p_{K_a}\in M$ with hyperbolic periodic orbits ${\cO}_1, \ldots, {\cO}_{K_a}$ such that ${\cO}_i\sim \mu_i$ for every $1\leq i\leq K_a$ and $HC(\cO_i)=HC_T(\mu_i)$. This is possible by Proposition \ref{HoclinicClassRelation}.    
    \end{proof}

    \begin{clly}\label{finite-MME-forf}
            If $h^u(f)>h^s(f)$ then $f$ has only a finite number of measures of maximal entropy.
    \end{clly}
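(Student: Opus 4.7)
The plan is to deduce the corollary directly from Theorem~\ref{MainTheoA} combined with Proposition~\ref{criterio-MME}, with no extra work beyond choosing the correct threshold $a$. First I would fix a real number $a$ with $h^s(f) < a < h(f)$; such a choice exists because the hypothesis $h^u(f) > h^s(f)$ together with $h(f) \geq h^u(f)$ yields $h(f) > h^s(f)$. Applying Theorem~\ref{MainTheoA} to this $a$ produces an integer $K_a$ and hyperbolic periodic orbits $\mathcal{O}_1, \ldots, \mathcal{O}_{K_a}$ with $\operatorname{s-index} \dime(E^s)+1$ such that every ergodic measure $\mu$ with $h_\mu(f) > a$ is homoclinically related to some $\mathcal{O}_i$.

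Next I would check that the conclusion of Theorem~\ref{MainTheoA} captures every ergodic measure of maximal entropy. If $\mu$ is such a measure, then $h_\mu(f) = h(f) > a$, so Theorem~\ref{MainTheoA} assigns to $\mu$ an index $i(\mu) \in \{1,\ldots,K_a\}$ with $\mu \sim \mathcal{O}_{i(\mu)}$. Moreover, since $h_\mu(f) > h^s(f)$, Lemma~\ref{mbe-implies-center-lyapunov-exponent<0} ensures that the central Lyapunov exponent of $\mu$ is strictly negative; combined with the uniform contraction on $E^s$ and expansion on $E^u$ from the partial hyperbolicity, this shows $\mu \in \cM_f^h$, so the hypotheses of Proposition~\ref{criterio-MME} are met.

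Finally I would invoke Proposition~\ref{criterio-MME} with the trivial potential $\phi \equiv 0$, for which equilibrium states are exactly measures of maximal entropy. The proposition states that each hyperbolic periodic orbit $\mathcal{O}_i$ supports at most one hyperbolic equilibrium state homoclinically related to it. Hence the map $\mu \mapsto i(\mu)$ from ergodic m.m.e.\ into $\{1,\ldots,K_a\}$ is injective, and the number of ergodic measures of maximal entropy of $f$ is bounded by $K_a < \infty$. There is no real obstacle to overcome here; the only point requiring care is respecting the strict inequality $h_\mu(f) > a$ in the statement of Theorem~\ref{MainTheoA}, which is why $a$ must be chosen strictly below $h(f)$, and this is exactly what the gap $h^u(f) > h^s(f)$ provides.
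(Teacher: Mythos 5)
Your proof is correct and follows essentially the same route as the paper: Theorem~\ref{MainTheoA} bounds the number of homoclinic classes (orbits) carrying high-entropy measures, Lemma~\ref{mbe-implies-center-lyapunov-exponent<0} gives hyperbolicity of every ergodic m.m.e., and Proposition~\ref{criterio-MME} with $\phi\equiv 0$ gives at most one m.m.e.\ per orbit. The only (harmless) difference is that you use the orbits $\mathcal{O}_1,\dots,\mathcal{O}_{K_a}$ supplied directly by Theorem~\ref{MainTheoA} instead of re-selecting an orbit via Proposition~\ref{HoclinicClassRelation}, which slightly streamlines the argument.
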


\begin{proof}
{
 Since $f$ is a partially hyperbolic system with a central bundle of dimension one, it possesses measures of maximal entropy. By Lemma \ref{mbe-implies-center-lyapunov-exponent<0}, every ergodic measure of maximal entropy is hyperbolic, so according to Proposition \ref{HoclinicClassRelation}, for every ergodic m.m.e.\ $\mu$  we can select an orbit $\mathcal{O}\in \operatorname{Per}_h(f)$ such that $\mu \sim {\cO}$ and by Proposition \ref{criterio-MME} there is at most one hyperbolic measure of maximal entropy associated with ${\cO}$. 
   Consequently, $\mu$ is the unique measure of maximal entropy homoclinically related  to ${\cO}$. Therefore every homoclinic class in $\cM_f^h$ has at most one measure of maximal entropy}. 
   
    Finally, by  Theorem \ref{MainTheoA}, the number of homoclinic classes in $\cM_f^h$ containing a measure of maximal entropy is finite. Therefore $f$ has finitely  many ergodic measures of maximal entropy.
   
    \end{proof}

    \begin{clly}\label{Finite-MME}
       If $h^u(f)>h^s(f)$, then there exists a $C^1$ neighborhood $\cU$ of $f$ such that  every $C^{1+}$ diffeomorphism $g\in \cU$ has only a finite number of measures of maximum entropy.

    \end{clly}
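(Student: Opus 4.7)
The plan is to combine two results already in hand: Lemma \ref{h^u(g)>h^s(g)}, which establishes that the strict inequality $h^u(\cdot) > h^s(\cdot)$ is a $C^1$-open condition among $C^{1+}$ diffeomorphisms, and Corollary \ref{finite-MME-forf}, which asserts finiteness of ergodic measures of maximal entropy for any single $C^{1+}$ partially hyperbolic diffeomorphism satisfying this inequality. The strategy is therefore: propagate the hypothesis from $f$ to a whole neighborhood, and then apply the pointwise finiteness result at every diffeomorphism in that neighborhood.

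Concretely, I would first fix real numbers $a_1, a_2$ with
\[
h^u(f) > a_1 > a_2 > h^s(f),
\]
which is possible by the strict inequality $h^u(f) > h^s(f)$. Applying Lemma \ref{h^u(g)>h^s(g)} to this pair yields a $C^1$ neighborhood $\cU$ of $f$ such that every $g \in \cU \cap \Diff^{1+}(M)$ satisfies
\[
h^u(g) > a_1 > a_2 > h^s(g),
\]
and in particular $h^u(g) > h^s(g)$.

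Next, I would apply Corollary \ref{finite-MME-forf} to each such $g$: since $g$ is itself a $C^{1+}$ partially hyperbolic diffeomorphism with one-dimensional center (partial hyperbolicity is also $C^1$-open, so we may shrink $\cU$ if necessary to ensure this) satisfying $h^u(g) > h^s(g)$, the corollary gives that $g$ has only finitely many ergodic measures of maximal entropy. This proves the statement.

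There is no real obstacle here; the work was done in establishing Lemma \ref{h^u(g)>h^s(g)} (openness) and Corollary \ref{finite-MME-forf} (finiteness for a single diffeomorphism). The only small point worth noting is that this corollary yields finiteness for each $g \in \cU$ individually, but does \emph{not} by itself produce a uniform bound on the number of ergodic m.m.e.\ over $\cU$; the stronger semicontinuity statement, namely that this number is bounded by $N_{max}(f)$ for $g$ sufficiently close to $f$, is the content of Theorem \ref{MainTheoB} and requires the additional compactness and weak$^*$-convergence arguments sketched in items (6)--(8) of the introduction rather than being a direct consequence of the present corollary.
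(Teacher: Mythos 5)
Your proposal is correct and follows essentially the same route as the paper: apply Lemma \ref{h^u(g)>h^s(g)} (with the paper choosing the midpoint $\frac{h^u(f)+h^s(f)}{2}$ where you choose $a_1>a_2$) to make $h^u(g)>h^s(g)$ hold on a $C^1$ neighborhood, then invoke Corollary \ref{finite-MME-forf} at each $g$. Your closing remark correctly distinguishes this from the uniform bound of Theorem \ref{MainTheoB}.
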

    
    \begin{proof} By Lemma $\ref{h^u(g)>h^s(g)}$  there exists a neighborhood $\cU$ of $f$ such that if $g\in \cU\cap \operatorname{Diff}^{1+}(M)$ so $h^u(g)>\frac{h^u(f)+h^s(f)}{2}>h^s(g)$. Finally, by Corollary \ref{finite-MME-forf}  every $g\in \cU \cap {\Diff}^{1+}(M)$  has finitely many ergodic measures of maximal entropy.
        
    \end{proof}

\begin{proof}[Proof of Theorem \ref{MainTheoB}]\label{ProofTheoB}
Let $\cU$ be a $C^1$ neighborhood of $f$ as in Corollary \ref{Finite-MME} and Lemma \ref{h^u(g)>h^s(g)}, which means that for every $g\in \cU \cap {\Diff}^{1+}(M)$, $h^u(g)>h^s(g)$, $g$ has many finite measures of maximal entropy, and the entropy varies continuously in $\cU \cap {\Diff}^{1+}(M)$.

Let $k>0$  be the number of measures of maximal entropy of $f$ given by Corollary  \ref{finite-MME-forf}. Let $\nu_1, \ldots, \nu_k$ be these ergodic measures of maximal entropy of $f$, and $p_1(f), \ldots, p_{k}(f)$ be periodic points of $f$ with orbit ${\cO}_1, \ldots, {\cO}_k$, given by Proposition \ref{HoclinicClassRelation} such that ${\cO}_i \sim \nu_i$, $HC({\cO}_i)=HC_T(\nu_i)$ and 
$$h(f_{|HC({\cO}_i)})=h(f).$$

    Let us consider a sequence of $C^{1+}$ diffeomorphisms, denoted as $g_n$, where each $g_n$ belongs to $\cU$ and the sequence $(g_n)_n$ converges to $f$ in the $C^1$ topology. Let $p_i(g_n)$ be the unique hyperbolic periodic point of $g_n$ derived from the structural stability of $p_i(f)$. 
    
    \begin{lemma}\label{p(g)SimMuj}
 If $\mu_n \in \cM_{g_n}^e$ is a m.m.e.\ for $g_n$, then there exists an index $j$ and a subsequence $(\mu_{n_l})_{l\geq 0}$ such that $\mu_{n_l}\sim p_j(g_{n_l})$ for every $l \geq 0$.
            \end{lemma}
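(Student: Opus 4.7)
The plan is to extract a weak$^*$-subsequential limit $\nu$ of $(\mu_n)$, identify $\nu$ as an m.m.e.\ of $f$, use the ergodic decomposition of $\nu$ to single out some $\nu_j$ appearing with positive weight, and then transfer the homoclinic relation $\nu_j \sim p_j(f)$ back to $\mu_n \sim p_j(g_n)$ by exploiting the uniform-size Pesin manifolds on $\Lambda_\rho(g_n)$ furnished by Corollary \ref{Hyperbolictime-uniform-measure} and Remark \ref{manifoldsexistence}.

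First, pass to a subsequence with $\mu_n \to \nu$ in the weak$^*$ topology for some $\nu \in \cM_f$. By Lemma \ref{h^u(g)>h^s(g)} the topological entropy is continuous at $f$, so $h(g_n) \to h(f)$. Since partially hyperbolic diffeomorphisms with one-dimensional center are $h$-expansive (Section \ref{subsebtion-dimEc1-mme}) and this property is uniform in a $C^1$ neighborhood of $f$, the metric entropy is jointly upper semicontinuous near $f$, so
\[
h_\nu(f) \geq \limsup_n h_{\mu_n}(g_n) = \lim_n h(g_n) = h(f),
\]
and $\nu$ is an m.m.e.\ of $f$. By Corollary \ref{finite-MME-forf} the simplex of m.m.e.\ of $f$ has extremal points $\{\nu_1,\dots,\nu_k\}$, so $\nu = \sum_{j=1}^{k} a_j \nu_j$ with $a_j \geq 0$ and $\sum_j a_j = 1$; fix any index $j$ with $a_j > 0$.

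Pick $a$ with $h^u(f) > a > h^s(f)$ and $h(g_n) > a$ for $n$ large (possible because $h(f) = h^u(f)$ by Lemma \ref{mbe-implies-center-lyapunov-exponent<0} together with the continuity above). Apply Corollary \ref{Hyperbolictime-uniform-measure} to obtain $\rho \in (0,1)$, $\beta > 0$ and a $C^1$ neighborhood $\cV \subseteq \cU$ of $f$ such that $\mu_n(\Lambda_\rho(g_n)) \geq \beta$ for large $n$ and $\nu_j(\Lambda_\rho(f)) \geq \beta$. Let $\delta_l,\delta_r$ be the uniform Pesin constants of Remark \ref{manifoldsexistence}. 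Since $\nu_j$ is homoclinically related to $p_j(f)$ (Definition \ref{RelationMuO}), there is a $\nu_j$-positive-measure set $X_j \subseteq \Lambda_\rho(f)$ of points whose Pesin manifolds of size $\delta_r$ transversally intersect both $W^s(p_j(f))$ and $W^u(p_j(f))$; fix an open ball $B$ of radius less than $\delta_l/3$ centered at some $x \in X_j$ with $\nu_j(B \cap \Lambda_\rho(f)) > 0$.

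To conclude, Remark \ref{HausdorffTopology} together with the $C^1$-closed nature of the definition of $\Lambda_\rho$ gives Hausdorff convergence $\Lambda_\rho(g_n) \to \Lambda_\rho(f)$, hence
\[
\liminf_n \mu_n\bigl(B \cap \Lambda_\rho(g_n)\bigr) \geq \nu\bigl(B \cap \Lambda_\rho(f)\bigr) \geq a_j\, \nu_j\bigl(B \cap \Lambda_\rho(f)\bigr) > 0.
\]
For large $n$, pick $y_n \in B \cap \Lambda_\rho(g_n)$ lying in this $\mu_n$-positive-measure set. By the $C^1$-continuity of uniform-size Pesin manifolds in both base point and diffeomorphism (Remark \ref{manifoldsexistence}) and the structural stability $p_j(g_n) \to p_j(f)$, the manifolds $W^{u/s}_{\delta_r}(y_n,g_n)$ and $W^{s/u}(p_j(g_n))$ stay $C^1$-close to $W^{u/s}_{\delta_r}(x,f)$ and $W^{s/u}(p_j(f))$, so the transverse intersections persist, and $\mu_n \sim p_j(g_n)$ for all large $n$. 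The main obstacle is precisely this final robustness step: one must ensure that $\mu_n$-typical points in $B \cap \Lambda_\rho(g_n)$ have Pesin manifolds that robustly cross those of $p_j(g_n)$ transversally in both directions, which is where the joint (base-point and diffeomorphism) $C^1$-continuity of uniform-size invariant manifolds on $\Lambda_\rho$ is essential.
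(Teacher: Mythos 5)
Your proposal breaks down at the mass-transfer step, and the direction of semicontinuity is the heart of the problem. Remark \ref{HausdorffTopology} gives only $\nu(K)\geq\limsup_n\mu_n(K_n)$ when $K_n\to K$ in the Hausdorff metric; it does not give lower bounds for $\liminf_n\mu_n(\,\cdot\,)$. Moreover, Hausdorff convergence $\Lambda_\rho(g_n)\to\Lambda_\rho(f)$ is not available: the map $g\mapsto\Lambda_\rho(g)$ is defined by closed conditions on all forward iterates and is not lower semicontinuous, so all one can say (after passing to a subsequence) is that $\Lambda_\rho(g_{n_l})\to K$ with $K\subseteq\Lambda_\rho(f)$, possibly strictly. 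Hence your inequality $\liminf_n\mu_n\bigl(B\cap\Lambda_\rho(g_n)\bigr)\geq\nu\bigl(B\cap\Lambda_\rho(f)\bigr)$ is unjustified and false in general, and with it the freedom to ``fix any $j$ with $a_j>0$'' disappears: the index $j$ must be chosen compatibly with where the masses $\mu_{n_l}|_{\Lambda_\rho(g_{n_l})}$ actually accumulate. The paper transfers mass in the opposite (correct) direction: from the uniform bound $\mu_n(\Lambda_\rho(g_n))\geq 2\beta$ of Corollary \ref{Hyperbolictime-uniform-measure} and Remark \ref{HausdorffTopology} it produces a compact set $\tilde\Lambda_\rho(f)\subseteq K\subseteq\Lambda_\rho(f)$ with $\nu(\tilde\Lambda_\rho(f))\geq\beta/2$, and only then selects $j$ with $\nu_j(\tilde\Lambda_\rho(f))>0$, which is what guarantees that points of $\tilde\Lambda_\rho(g_{n_l})$ carrying $\mu_{n_l}$-mass can be found near the relevant location.

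There is a second genuine gap in the claim that a $\nu_j$-positive-measure set $X_j\subseteq\Lambda_\rho(f)$ has Pesin manifolds \emph{of size $\delta_r$} transversally meeting $W^s(p_j(f))$ and $W^u(p_j(f))$. The homoclinic relation $\nu_j\sim p_j(f)$ only yields transverse intersections of the \emph{full} Pesin manifolds of typical points, with no control on how far along the manifold the intersection occurs, while the continuity in Remark \ref{manifoldsexistence} covers only the uniform-size local manifolds; localizing the intersections to the $\delta_r$-pieces needs an extra argument (e.g.\ a measurable selection of a uniformly bounded piece and finitely many iterations) that you do not give. The paper sidesteps this entirely: using $\supp\nu_j\subseteq HC(\cO_j)=\overline{W^s(\cO_j)\pitchfork W^u(\cO_j)}$ (Definition \ref{Characterization-Homoclinic-class}, Proposition \ref{HoclinicClassRelation}) it picks a transverse homoclinic point $x$ of $p_j(f)$ within $\delta_1/4$ of a point where the $\nu_j$- and $\mu_{n_l}$-masses on the sets $\tilde\Lambda_\rho$ concentrate, so that all required intersections are between the uniform-size local manifolds of nearby $\Lambda_\rho$-points and fixed local pieces of $W^{s/u}(\cO_j)$ through $x$ and its continuations $x_{n_l}$ for $g_{n_l}$ --- a compact, robust configuration. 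Finally, your identification of $\nu$ as an m.m.e.\ via ``uniform $h$-expansiveness implies joint upper semicontinuity of metric entropy'' is an assertion not established in the paper; the paper instead combines $h_{\mu_n}(g_n)=h^u_{\mu_n}(g_n)$ (a consequence of Lemma \ref{mbe-implies-center-lyapunov-exponent<0}) with the semicontinuity of unstable entropy from \cite[Theorem A]{Yan21} and the continuity of topological entropy from Lemma \ref{h^u(g)>h^s(g)}. That step of yours could be repaired with a suitable reference, but the two gaps above are where the argument, as written, fails.
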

        \begin{proof}
    Let $\rho$ be a real number such that  $h^s(g_n)-h^u(g_n)<\log(\rho)<0$ for every $n$ and let $\Lambda_\rho(g_n)$ be as defined in \eqref{Lambarho-definition}. 
    By Corollary \ref{Hyperbolictime-uniform-measure}, there exists $\beta>0$ such that $\mu_{n}(\Lambda_{\rho}(g_n))\geq 2\beta$ and  therefore we can choose a compact subset $\tilde{\Lambda}_\rho(g_n)\subset \Lambda_\rho(g_n)$ of regular points of $\mu_n$ with $\mu_n(\tilde{\Lambda}_\rho(g_n))\geq \beta$.  
    Let $n_l$ be an increased sequence of natural numbers such that $\mu_{n_l}$ converges to $\nu$ in the weak * topology and $\Lambda_\rho(g_{n_l})$ converges to a compact set $K$ in the Hausdorff metric. 
    Observe that $K\subset \Lambda_\rho(f)$; so, by Remark \ref{HausdorffTopology}, there is a compact subset $\tilde{\Lambda}_\rho(f)\subset K$ of regular points for $f$ with $\nu(\Tilde{\Lambda}_\rho(f))\geq \frac{\beta}{2}$.

Observe that $\nu$ is not necessarily an ergodic measure  however it  is a maximal entropy measure of $f$. In fact, since $\mu_n\in \cM_{g_n}^e$ and $\mu_n$ is m.m.e.\ for $g_n$, by  Lemma \ref{mbe-implies-center-lyapunov-exponent<0}, we have that \eqref{u-entropy-metric=entropy-metric} holds, that is, $$h^u_{\mu_n}(g_n) = h_{\mu_n}(g_n).$$ 
            
   Furthermore, as shown in \cite[Theorem A]{Yan21}, we have:

\[
\limsup_n h^u_{\mu_{n_l}}(g_n) \leq h_\nu^u(f).
\]

Since each $\mu_n$ is  a measure of maximal entropy for $g_n$, the topological entropy is a continuous function on ${\cU} \cap {\Diff}^{r+1}(M)$ and $h^u_\mu(f)\leq h_\mu(f)$, it follows that $\nu$ is also a measure of maximal entropy for $f$, therefore, the measure $\nu$ is a convex combination of $\nu_1,\cdots,\nu_k$, that is, $\nu=\sum_ia_i\nu_i$ where $0\leq a_i\leq 1$ so, there is $j$ such that $\nu_j(\Tilde{\Lambda}_\rho(f))\geq \frac{\beta}{2}$.

   By Definition \ref{Characterization-Homoclinic-class} we can choose $x\in  W^s(p_j(f)) \cap W^u(p_j(f))$ such that 
   
   $$\nu_j(B_{\frac{\delta_1}{4}}(x)\cap \Tilde{\Lambda}_\rho(f))>0,$$
   where $0<\delta_1$ is small enough such that for every $z\in B_{\frac{\delta_1}{2}}(x)\cap \Tilde{\Lambda}_\rho(f)$,
   
   $$W^{s/u}_{\delta_1}(z)\pitchfork W^{s,u}_{\delta_1}(x)\neq \emptyset.$$
   
   Since the invariant manifolds of the points $z\in\Lambda_\rho(g_{n_l})$ vary continuously in $g_{n_l}$ and $z$, for every ${n_l}$ large enough, there exists  $y_{n_l}\in \tilde{\Lambda}_\rho(g_n)$ such that $d(y_{n_l},x)<\delta/4$ and such that
   $$\mu_{{n_l}}(B_{\frac{\delta_1}{4}}(y_{n_l})\cap \Tilde{\Lambda}_\rho(g_{n_l}))>0,$$
    and for every $z\in B_{\frac{\delta_1}{4}}(y_{n_l})\cap \Tilde{\Lambda}_\rho(g_{n_l})$ we have

    $$W^{s/u}_{\delta_1}(z)\pitchfork W^{s,u}_{\delta_1}(x)\neq \emptyset.$$

  Since the invariant manifolds vary continuously with  $g_n$, the invariant manifolds $W^{s/u}(p_j(g_{n_l}))$ converge to $W^{s/u}(p_j(f))$ in the $C^1$ topology, and so  there exists a sequence $x_{n_l}\in W^s(p_j(g_{n_l}))\cap W^u(p_j(g_n))$ such that $(x_{n_l})_{l\in{\bN}}$ converges to $x$ and $W^{s/u}_{\delta_1}(x_{n_l})$ converges to $W^{s/u}_{\delta_1}(x)$ in the $C^1$ topology. Therefore, for $n$ large enough,
   
    $$W^{s/u}_{\delta_1}(z)\pitchfork W^{s,u}_{\delta_1}(x_{n_l})\neq \emptyset$$
    for every $z\in B_{\frac{\delta_1}{2}}(y_{n_l})\cap \Tilde{\Lambda}_\rho(g_{n_l})$. Thus, we have that $p_j(g_{n_l})\sim \mu_{{n_l}}$, for every ${n_l}$ big and the proof is concluded.

    \end{proof}

Now, observe that $g_n$ cannot have two ergodic measures of maximal entropy that are homonically related to the same $p_i(g_n)$. 
Thus, the proof is concluded.
\end{proof}

  Let $\mu_1, \cdots , \mu_k$ be the ergodic measures of maximal entropy of $f$. Also, let $p_i(f)$ denote the hyperbolic point associated to $\mu_i$ for $i=1,\cdots k$. For a $C^1$ diffeomorphism $g$ sufficiently close to $f$, consider the hyperbolic periodic points $p_i(g)$  
such that each $p_i(g)$  is near $p_i(f), \, i=1,\cdots k$. We obtain the following corollary:

\begin{clly}
{Let $f:M\to M$ be a $C^{1+}$ partially hyperbolic diffeomorphism of a closed manifold $M$ with $TM=E^u \oplus E^c \oplus E^s $ and $\operatorname{dim}(E^c)=1$. If $h^u(f)>h^s(f)$, then there exists a $C^1$ neighborhood $\mathcal{U}$ of $f$ such that for every $C^{1+}$ diffeomorphism $g\in \mathcal{U}$, if $\mu$ is an ergodic measure of maximal entropy for $g$ then $\mu$ is homoclinically related to the orbit of $p_j(g)$ for some $j\in {1,\cdots, k}$.}
\end{clly}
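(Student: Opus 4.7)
The plan is to argue by contradiction, reducing the statement to Lemma \ref{p(g)SimMuj}, which was established inside the proof of Theorem \ref{MainTheoB}. First, I would shrink an initial neighborhood of $f$ so that the hyperbolic continuations $p_i(g)$ of $p_i(f)$ are defined for every $g \in \mathcal{U} \cap \operatorname{Diff}^{1+}(M)$ by structural stability of hyperbolic periodic points, so that the statement even makes sense. I would also intersect with the neighborhood provided by Corollary \ref{Finite-MME} and by Lemma \ref{h^u(g)>h^s(g)}, so that each such $g$ has $h^u(g)>h^s(g)$ and only finitely many ergodic m.m.e..

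Next, assume for contradiction that no such neighborhood exists. Then there is a sequence $g_n \in \operatorname{Diff}^{1+}(M)$ converging to $f$ in the $C^1$ topology together with ergodic measures of maximal entropy $\mu_n \in \cM_{g_n}^e$ such that, for every $n$ and every $i \in \{1,\ldots,k\}$, the measure $\mu_n$ is \emph{not} homoclinically related to the orbit of $p_i(g_n)$. Apply Lemma \ref{p(g)SimMuj} to the sequence $(g_n,\mu_n)$: this yields an index $j \in \{1,\ldots,k\}$ and a subsequence $(\mu_{n_l})$ with $\mu_{n_l} \sim p_j(g_{n_l})$ for all $l$, contradicting the standing assumption. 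Hence the corollary holds on the chosen $\mathcal{U}$.

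The main obstacle — producing, from the $C^1$-proximity of $g$ to $f$ and the finiteness of ergodic m.m.e.\ of $f$, a specific $p_j(g)$ homoclinically related to $\mu$ — has already been handled in Lemma \ref{p(g)SimMuj}. Its proof combines the upper semicontinuity of unstable metric entropy (via \cite{Yan21} as quoted in the proof of Theorem \ref{MainTheoB}), the identity $h_\mu(g_n) = h^u_{\mu_n}(g_n)$ coming from Lemma \ref{mbe-implies-center-lyapunov-exponent<0} and equation \eqref{u-entropy-metric=entropy-metric}, the uniform lower bound $\mu_n(\Lambda_\rho(g_n)) \geq \beta$ from Corollary \ref{Hyperbolictime-uniform-measure}, and the continuous dependence of the Pesin invariant manifolds of points in $\Lambda_\rho(g)$ on both $g$ and the base point (Remark \ref{manifoldsexistence}), together with Proposition \ref{HoclinicClassRelation} to transfer a homoclinic intersection from the weak-$*$ limit $\nu$ and the associated periodic point $p_j(f)$ back to the approximating pairs $(\mu_{n_l}, p_j(g_{n_l}))$.

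Thus the corollary is obtained essentially for free from the structural content of Theorem \ref{MainTheoB}; the only point requiring care is the initial choice of $\mathcal{U}$ guaranteeing the simultaneous existence and hyperbolicity of the continuations $p_1(g),\ldots,p_k(g)$.
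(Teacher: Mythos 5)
Your argument is correct and is essentially identical to the paper's own proof: the paper also argues by contradiction, taking a sequence $g_n \to f$ with ergodic m.m.e.\ $\mu_n$ not homoclinically related to any $p_i(g_n)$ and observing that this contradicts Lemma \ref{p(g)SimMuj}. Your additional remarks on choosing $\mathcal{U}$ so that the continuations $p_i(g)$ exist are a reasonable (implicit in the paper) precaution and do not change the argument.
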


\begin{proof}
{Assume, by contradiction, the existence of a sequence of $C^{1+}$ diffeomorphisms $(g_n)_{n\in\mathbb{N}}$ converging to $f$, and let $\mu_n$ be an ergodic measure of maximal entropy of $g_n$ such that $\mu_n$ is not homoclinically related to any $p_i(g_n)$ for $i=1,\cdots, k$. This sequence contradicts Lemma \ref{p(g)SimMuj}. Thus, we conclude the proof.}
\end{proof}

\section{Proof of theorem \ref{MainCentralLyapunovexponentfarfrom zero}}

In this section, we prove Theorem \ref{MainCentralLyapunovexponentfarfrom zero}.
 Recall that $f:M\to M$ denotes a $C^{1+}$ partially hyperbolic diffeomorphism of a compact manifold $M$ with $TM=E^u \oplus E^c \oplus E^s $ and $\dim(E^c)=1$. Moreover, the sets $\cM^+_f$ and $\cM^-_f$ are defined as follows:
$$\cM^+_f=\{\mu \in \cM_f^e: \lambda_c(f,\mu)\geq 0\} \mbox{ and }\cM^-_f=\{\mu \in \cM_f^e: \lambda_c(f,\mu)\leq 0\} $$
and the values $h^-(f)$ and $h^-(f)$ are given by
$$h^-(f)=\sup_{\mu \in \cM_f^-}\{h_\mu(f)\} \mbox{ and } h^+(f)=\sup_{\mu \in \cM_f^+}\{h_\mu(f)\}.$$

Note that if $h^+(f)\neq h^-(f)$, then either $h^+(f)>h^-(f)$ or $h^+(f)< h^-(f)$. We will consider the case where $h^-(f)> h^+(f)$; the proof for $h^+(f)>h^-(f)$ is analogous. We prove the following proposition.

\begin{prop}\label{farfromzero}
Let $f:M\to M$ be a $C^{1+}$ partially hyperbolic diffeomorphism of a compact manifold $M$ with $TM=E^u \oplus E^c \oplus E^s $ and $\operatorname{dim}(E^c)=1$. Then, $h^-(f)>h^+(f)$ if and only if there exist $\lambda<0$ and $h(f)\geq \tilde{h}>0$ such that if $\mu$ is an ergodic measure of $f$ with $h_\mu(f)>\tilde{h}$, then $\lambda_c(f,\mu)<-\lambda$.
\end{prop}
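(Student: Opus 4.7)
The plan is to prove the two implications separately. The easy direction $(\Leftarrow)$ uses the upper semi-continuity of the entropy, which holds because $f$ is $h$-expansive as a partially hyperbolic system with one-dimensional center (see \cite{CY05,DFPV12,LVY13}). Given $\lambda<0$ and $\tilde h\in(0,h(f)]$ as in the hypothesis, upper semi-continuity supplies an ergodic m.m.e.\ $\mu_0$ with $h_{\mu_0}(f)=h(f)$; in the nontrivial case $h(f)>\tilde h$ the hypothesis yields $\lambda_c(f,\mu_0)<\lambda<0$, so $\mu_0\in\cM_f^-$ and $h^-(f)\geq h(f)$. Any ergodic $\nu\in\cM_f^+$ satisfies $\lambda_c(f,\nu)\geq 0>\lambda$, so the contrapositive gives $h_\nu(f)\leq\tilde h$, whence $h^+(f)\leq\tilde h<h^-(f)$.

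For the hard direction $(\Rightarrow)$, I will use a diagonal argument. Assuming $h^-(f)>h^+(f)$, I claim that for some sufficiently large integer $k_0$ the choice $\tilde h:=h^-(f)-1/k_0$ and $\lambda:=-1/k_0$ works. Suppose not; then for each $k\geq 1$ there exists an ergodic $\nu_k$ with $h_{\nu_k}(f)>h^-(f)-1/k$ and $\lambda_c(f,\nu_k)\geq -1/k$. For $k$ large enough, $h_{\nu_k}>h^+(f)$ forces $\nu_k\notin\cM_f^+$ and hence $\lambda_c(\nu_k)<0$; combined with the lower bound, $\lambda_c(\nu_k)\to 0^-$ while $h_{\nu_k}(f)\to h^-(f)=h(f)$. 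Extract a weak-$*$ convergent subsequence $\nu_k\to\nu$ in $\cM_f$. Upper semi-continuity of entropy together with the trivial bound $h_\nu\leq h(f)$ yield $h_\nu(f)=h(f)$, so $\nu$ is a (possibly non-ergodic) measure of maximal entropy; continuity of $\zeta\mapsto\int\log\|Df(x)_{|E^c}\|\,d\zeta$ gives $\int\log\|Df(x)_{|E^c}\|\,d\nu=0$.

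The contradiction emerges from the ergodic decomposition $\nu=\int\nu_\xi\,d\bP(\xi)$. By affinity of entropy $\int h_{\nu_\xi}(f)\,d\bP=h^-(f)$, and since $h_{\nu_\xi}\leq h^-(f)$ almost surely we conclude $h_{\nu_\xi}(f)=h^-(f)$ for $\bP$-a.e.\ $\xi$. Each such $\nu_\xi$ is then an ergodic m.m.e.\ with entropy strictly exceeding $h^+(f)$, so $\nu_\xi\notin\cM_f^+$ and $\lambda_c(f,\nu_\xi)<0$ strictly. Hence
\begin{equation*}
0=\int\log\|Df(x)_{|E^c}\|\,d\nu=\int\lambda_c(f,\nu_\xi)\,d\bP(\xi)<0,
\end{equation*}
the strict inequality being forced because the integrand is strictly negative on a set of full $\bP$-measure. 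This is the desired contradiction. The main obstacle is arranging the diagonal so that the weak-$*$ limit $\nu$ is simultaneously a measure of maximal entropy and satisfies $\int\lambda_c\,d\nu=0$; once this is secured, the affinity of entropy forces every ergodic component of $\nu$ to be itself a m.m.e., and the strict negativity of $\lambda_c$ on such measures (which follows from $h^-(f)>h^+(f)$) closes the argument.
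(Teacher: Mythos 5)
Your argument is correct, but it takes a genuinely different route from the paper, whose entire proof of Proposition \ref{farfromzero} is a deferral: the passage from $h^-(f)>h^+(f)$ to the uniform bound on the center exponent is quoted from \cite[Equation (9.2)]{CT21}, and the converse is said to follow directly from the definitions of $h^{\pm}(f)$. You replace the citation by a self-contained compactness argument: negate the conclusion along a sequence $\nu_k$, pass to a weak-$*$ limit $\nu$, use upper semicontinuity of $\mu\mapsto h_\mu(f)$ (available here by $h$-expansiveness of partially hyperbolic systems with one-dimensional center, exactly the fact recorded in Subsection \ref{subsebtion-dimEc1-mme}) to see that $\nu$ is an m.m.e., use continuity of $\mu\mapsto\int\log\|Df_{|E^c}\|\,d\mu$ to get mean center exponent zero, and then let affinity of entropy and the ergodic decomposition force almost every ergodic component to be an m.m.e.\ with strictly negative center exponent, contradicting the vanishing of the integral (the strict inequality is legitimate: a nonpositive function with zero integral vanishes almost everywhere). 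This buys independence from the external reference, at the price of semicontinuity and decomposition machinery the paper needs elsewhere anyway, whereas the paper's route is shorter but leaves the reader to check that the cited equation really covers this setting; your elementary derivation of the easy direction matches the paper's ``follows from the definitions''. Two remarks concern the statement rather than your proof: you silently repair the sign slip in the proposition (the paper writes $\lambda<0$ and $\lambda_c(f,\mu)<-\lambda$, which is much weaker than intended; the use in Lemma \ref{HyerbolictimeforC} through $e^{-\lambda}<\rho<1$ shows the authors mean $\lambda_c$ bounded away from zero from below), and your aside about the ``nontrivial case'' $h(f)>\tilde h$ is in fact essential, since with $\tilde h=h(f)$ the hypothesis is vacuous and the backward implication fails as literally stated; the equivalence should be read with $\tilde h<h(f)$, which is what your choice $\tilde h=h^-(f)-1/k_0$ produces.
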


\begin{proof}
Sufficiency has been proven in  \cite[Equation (9.2)]{CT21}, and  necessity follows directly from the definition of $h^-$ and $h^+$.
\end{proof}

\begin{lemma}\label{HyerbolictimeforC}
    Suppose that $h^-(f)>h^+(f)$. If $\mu\in \cM_f^e$ with $h_\mu(f)>\tilde{h}$ then for every $e^{-\lambda}<\rho<1$ we have that $\mu(\Lambda_\rho)\geq\frac{\log(\rho)-\lambda_c(f,\mu)}{\log (\rho)-m(f)}.$
\end{lemma}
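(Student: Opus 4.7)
The plan is to mimic the proof of Lemma \ref{Hyerbolictime} almost verbatim, simply swapping out the hypothesis that was used there to force $\lambda_c(f,\mu)<0$. In Lemma \ref{Hyerbolictime} one invokes Lemma \ref{mbe-implies-center-lyapunov-exponent<0}, which requires $h^u(f)>h^s(f)$. Here, the alternative hypothesis $h^-(f)>h^+(f)$ together with Proposition \ref{farfromzero} gives a quantitative replacement: every ergodic $\mu$ with $h_\mu(f)>\tilde h$ satisfies a uniform negative upper bound on $\lambda_c(f,\mu)$, which is exactly what the Pliss Lemma step of Lemma \ref{Hyerbolictime} needs.

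In more detail, I would first apply Proposition \ref{farfromzero} to extract the constants $\lambda,\tilde h$ so that $\lambda_c(f,\mu)<-\lambda$ for any ergodic $\mu$ with $h_\mu(f)>\tilde h$. For $\rho$ in the prescribed range $e^{-\lambda}<\rho<1$, this immediately gives
$$
\lambda_c(f,\mu) \;<\; -\lambda \;<\; \log\rho \;<\; 0,
$$
so in particular $e^{\lambda_c(f,\mu)}<\rho$, placing us in the regime where the hyperbolic-times argument of Lemma \ref{Hyerbolictime} is applicable. I would then define
$$
\Lambda(f)=\Lambda_\rho\cap\mathcal{R}\cap\Lambda_\mu,
$$
where $\mathcal{R}$ is the Oseledets regular set and $\Lambda_\mu$ is the set of points whose central Lyapunov exponent equals $\lambda_c(f,\mu)$, and note that $\mu(\Lambda(f))=\mu(\Lambda_\rho)$ (since the first two sets have full measure).

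The main step is the application of the Pliss Lemma (Lemma \ref{Pliss lemma}) with $\alpha_1=m(f)$, $\alpha_2=\lambda_c(f,\mu)$, $\varepsilon=\log\rho-\lambda_c(f,\mu)>0$, to the sequence $a_j=\log\|Df(f^j(x))|_{E^c_{f^j(x)}}\|$, for $x\in\Lambda(f)$ (whose Birkhoff averages converge to $\lambda_c(f,\mu)$). This produces an infinite sequence of ``hyperbolic times'' $n_l\to\infty$ for which
$$
\|Df^{n-n_l}(f^{n_l}(x))|_{E^c}\| < \rho^{\,n-n_l}\qquad\text{for all }n>n_l,
$$
so $f^{n_l}(x)\in\Lambda_\rho$ for every $l$, together with the density lower bound $\limsup_l \tfrac{l}{n_l}\ge \varepsilon/(\alpha_2+\varepsilon-\alpha_1)=(\log\rho-\lambda_c(f,\mu))/(\log\rho-m(f))$. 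Combining with Birkhoff's ergodic theorem applied to $\mathbf 1_{\Lambda_\rho}$ yields
$$
\mu(\Lambda_\rho)\;\ge\;\limsup_{l\to\infty}\frac{l}{n_l}\;\ge\;\frac{\log\rho-\lambda_c(f,\mu)}{\log\rho-m(f)},
$$
which is the desired inequality.

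There is essentially no new obstacle here: every mechanical step is a transcription of the proof of Lemma \ref{Hyerbolictime}. The only point worth verifying carefully is that Proposition \ref{farfromzero} really provides a strict gap $\lambda_c(f,\mu)<\log\rho$ throughout the prescribed $\rho$-range, so that the Pliss threshold $\alpha_2+\varepsilon=\log\rho$ is admissible; this is exactly how the range $e^{-\lambda}<\rho<1$ is designed. In other words, the hypothesis $h^-(f)>h^+(f)$ is being used solely as a uniform substitute for the negative-central-exponent conclusion of Lemma \ref{mbe-implies-center-lyapunov-exponent<0}, and the remainder of the argument is identical.
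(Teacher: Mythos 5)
Your proposal is correct and is exactly the argument the paper intends: the paper omits the proof as ``analogous to Lemma \ref{Hyerbolictime}'', and you have filled in precisely that analogy, using Proposition \ref{farfromzero} (with $\lambda>0$, despite the sign typo in its statement) as the substitute for Lemma \ref{mbe-implies-center-lyapunov-exponent<0} to guarantee $e^{\lambda_c(f,\mu)}<e^{-\lambda}<\rho$, and then running the same Pliss--Birkhoff argument. No discrepancies with the paper's approach.
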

The proof of this lemma is analogous to the proof of Lemma \ref{Hyerbolictime} and so we omit it.

\begin{proof}[Proof of Theorem \ref{MainCentralLyapunovexponentfarfrom zero}] Note that for $a>\tilde{h}$ by Lemma \ref{HyerbolictimeforC} we have the same conclusion than Theorem $\ref{MainTheoA}$. So, we can conclude that $f$ has finitely many ergodic measure of maximal entropy.
    
\end{proof}

%%%%%%%%%%%%%%%%%%%%%%%%%%%%%%%%%%%%%%%%
%%%%%%%%%%%%%%%%%%%%%%%%%%%%%%%%%%%%%%%%%

\section{Proof of Theorems \ref{MainTheoC} and \ref{MainTheoD}}\label{Proof-C-D}
In this section, we prove Theorems \ref{MainTheoC} and \ref{MainTheoD}. 
As before, $f:M\to M$ denotes a $C^{1+}$ partially hyperbolic diffeomorphism of a compact manifold $M$ with $TM=E^u \oplus E^c \oplus E^s $ and ${\dime}(E^c)=1$ and $\phi:M\to \bR$ denotes a H\"older continuous potential. 
%{By Remark \ref{h^u(f)=h^s(f^{-1})} it is enough \textcolor{blue}{to} consider $h^u(f)>h^s(f)$.} 

\begin{proof}[Proof of  Theorem \ref{MainTheoC}]
            Suppose that $h^u(f)>\sup \phi>\inf\phi>h^s(f)$. Let $b>0$ be a real number such that $P(f,\phi) \geq b> h^s(f)+\sup \phi$. Note that each ergodic measure $\mu$ of $f$ with $P_\mu(f,\phi)\geq b$ holds that $$h_\mu(f)\geq b-\sup \phi>h^s (f).$$ 

            Consider $a=b-\sup \phi$ in Theorem $\ref{MainTheoA}$ so  there exist $K_a\in \mathbb{N}$, hyperbolic periodic orbits ${\cO}_1\cdots {\cO}_{K_a}$ such that if $\mu\in \cM_f^e$ with $h_\mu(f)\geq b-\sup\phi $,  $\mu \sim {\cO}_i$ for some $1 \leq i\leq K_a$. 

    Finally, we choose the $K_b$ hyperbolic periodic orbits \{$\mathcal{O}_1,\cdots , \mathcal{O}_{K_b}$\} which are homoclinically related to an ergodic measure that has  metric pressure associated greater or equal to $b$. This concludes the proof.
\end{proof}

\begin{clly}\label{finite-EE-forf}
    If $h^u(f)>\sup \phi>\inf \phi> h^s(f)$ then $f$ has only a finite number of ergodic equilibirum states.
\end{clly}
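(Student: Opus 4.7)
The plan mirrors the argument used in Corollary \ref{finite-MME-forf}, adapted from measures of maximal entropy to equilibrium states. I would first verify that $f$ admits equilibrium states at all: since $\dim(E^c)=1$, $f$ is $h$-expansive (Section \ref{subsebtion-dimEc1-mme}), so $\mu\mapsto h_\mu(f)+\int\phi\,d\mu$ is upper semicontinuous on the weak-$*$ compact set $\cM_f$ and attains its supremum; then the ergodic decomposition restricts attention to ergodic equilibrium states.

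The key preliminary step is to establish the strict inequality $P(f,\phi)>h^s(f)+\sup\phi$, which brings us into the regime where Theorem \ref{MainTheoC} operates. Take an ergodic measure of maximal entropy $\mu$ of $f$, which exists by the same $h$-expansiveness argument applied to the zero potential. Since $h_\mu(f)=h(f)\geq h^u(f)>\sup\phi>h^s(f)$, Lemma \ref{mbe-implies-center-lyapunov-exponent<0} yields that $\mu$ is hyperbolic with negative central Lyapunov exponent, so \eqref{u-entropy-metric=entropy-metric} gives $h_\mu(f)=h^u_\mu(f)\leq h^u(f)$, whence $h_\mu(f)=h^u(f)$. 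Therefore
$$P(f,\phi)\;\geq\; h_\mu(f)+\int\phi\,d\mu\;\geq\; h^u(f)+\inf\phi\;>\;\sup\phi+h^s(f),$$
where the last inequality uses $h^u(f)>\sup\phi$ together with $\inf\phi>h^s(f)$.

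With this in hand, I would invoke Theorem \ref{MainTheoC} with the choice $a=P(f,\phi)$. This produces an integer $K_a$ and hyperbolic periodic orbits $\mathcal{O}_1,\ldots,\mathcal{O}_{K_a}$ of $\operatorname{s-index}\,\dim(E^s)+1$ such that every ergodic measure $\nu$ with $P_\nu(f,\phi)\geq a$ is homoclinically related to some $\mathcal{O}_i$. Every ergodic equilibrium state $\nu$ automatically satisfies $P_\nu(f,\phi)=P(f,\phi)=a$, and by Corollary \ref{mbp-implies-center-lyapunov-exponent<0} each such $\nu$ is hyperbolic. Thus Proposition \ref{criterio-MME} applies to $\phi$ and to each $\mathcal{O}_i$ separately: for each $i$ there is at most one hyperbolic equilibrium state for $\phi$ homoclinically related to $\mathcal{O}_i$. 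Combining, the number of ergodic equilibrium states is at most $K_a$, hence finite.

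The only delicate point is the strict inequality $P(f,\phi)>h^s(f)+\sup\phi$, but the interval hypothesis $h^u(f)>\sup\phi\geq\inf\phi>h^s(f)$ is engineered to produce exactly this gap once Lemma \ref{mbe-implies-center-lyapunov-exponent<0} identifies $h_\mu(f)$ with $h^u(f)$ for an ergodic measure of maximal entropy. Beyond that, the remainder is a direct bookkeeping application of Theorem \ref{MainTheoC} and Proposition \ref{criterio-MME}, so I do not anticipate a substantial obstacle.
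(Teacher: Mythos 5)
Your argument is correct and follows essentially the same route as the paper: existence of equilibrium states via $h$-expansiveness, hyperbolicity of each ergodic equilibrium state via Corollary \ref{mbp-implies-center-lyapunov-exponent<0}, finiteness of the relevant homoclinic classes via Theorem \ref{MainTheoC}, and uniqueness within each class via Proposition \ref{criterio-MME}. Your explicit verification of the gap $P(f,\phi)>h^s(f)+\sup\phi$ (which the paper only records in the remark preceding Corollary \ref{mbp-implies-center-lyapunov-exponent<0}) is a welcome clarification but not a different method.
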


\begin{proof}
    
    As proven in Corollary \ref{finite-MME-forf}. Subsection \ref{subsebtion-dimEc1-mme}, Theorem \ref{criterio-MME}, Proposition \ref{HoclinicClassRelation} and Lemma \ref{mbp-implies-center-lyapunov-exponent<0} imply that  equilibrium states exist for $(f,\phi)$, each ergodic equilibrium state $\mu$ is hyperbolic and there exists an orbit ${\cO}\in {\Per}_h(f)$ such that 
    $\mu$ is the unique equilibrium state homoclinically related to it. Consequently, every homoclinic class in $\cM_f^h$ can have, at most, one ergodic equilibirum states.

    By Theorem \ref{MainTheoC} the number of homoclinic classes in $\cM_f^h$  with equilibrium states is finite. Therefore $(f,\phi)$ has only a finite number of ergodic equilibrium states.  

    \end{proof}

\begin{clly}\label{finiteEE}
If $h^u(f)>\sup \phi>\inf \phi>h^s(f)$, then there exists a $C^1$ neighborhood $\cU$ of $f$ such that for every $C^{1+}$ diffeomorphism $g\in \cU$, $(g,\phi)$ has only a finite number of equilibrium states.
\end{clly}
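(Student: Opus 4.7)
The plan is to deduce this corollary from Corollary~\ref{finite-EE-forf} applied to each diffeomorphism in a suitable $C^1$ neighborhood of $f$, using Lemma~\ref{h^u(g)>h^s(g)} to propagate the quantitative separation $h^u > \sup\phi > \inf\phi > h^s$ from $f$ to nearby diffeomorphisms. Since the potential $\phi$ is fixed, the key observation is that the two numbers $\sup\phi$ and $\inf\phi$ are constants, so they serve as perfect ``buffers'' for an application of Lemma~\ref{h^u(g)>h^s(g)}.

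First, I would apply Lemma~\ref{h^u(g)>h^s(g)} with $a_1=\sup\phi$ and $a_2=\inf\phi$. The standing hypothesis $h^u(f)>\sup\phi\geq\inf\phi>h^s(f)$ makes this application legitimate, and the conclusion of the lemma gives a $C^1$ neighborhood $\mathcal{U}$ of $f$ such that every $g\in\mathcal{U}\cap\operatorname{Diff}^{1+}(M)$ satisfies
\[
h^u(g) > \sup\phi \geq \inf\phi > h^s(g).
\]
Next, for any such $g$, the hypotheses of Corollary~\ref{finite-EE-forf} hold for the pair $(g,\phi)$ (with $g$ in place of $f$), so $(g,\phi)$ admits only finitely many ergodic equilibrium states. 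Since the set of equilibrium states is a Choquet simplex whose extreme points are exactly the ergodic equilibrium states, finiteness of ergodic equilibrium states is the content of the statement.

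There is essentially no real obstacle here: the hard work is already absorbed into Lemma~\ref{h^u(g)>h^s(g)} (which provides robustness of the unstable/stable entropy gap in the $C^1$ topology) and into Corollary~\ref{finite-EE-forf} (which gives the fiberwise finiteness statement via Theorem~\ref{MainTheoC}, Proposition~\ref{HoclinicClassRelation}, Proposition~\ref{criterio-MME}, and Corollary~\ref{mbp-implies-center-lyapunov-exponent<0}). The only mildly delicate point is to verify that the potential $\phi$ itself does not need to be perturbed: the inequalities in the hypothesis involve the fixed scalars $\sup\phi$ and $\inf\phi$, hence Lemma~\ref{h^u(g)>h^s(g)} is directly applicable with these scalars as the separating values $a_1,a_2$, and no further continuity argument on $\phi$ is required. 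The argument therefore consists of chaining the two existing results and the proposal is effectively a two-line deduction.
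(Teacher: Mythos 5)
Your proposal is correct and is essentially identical to the paper's own proof: apply Lemma \ref{h^u(g)>h^s(g)} with $a_1=\sup\phi$, $a_2=\inf\phi$ to get a $C^1$ neighborhood where $h^u(g)>\sup\phi>\inf\phi>h^s(g)$, and then invoke Corollary \ref{finite-EE-forf} for each such $g$. The additional remark about the simplex structure of equilibrium states is a harmless clarification not needed in the paper's argument.
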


\begin{proof}
    Taking $a_1= \sup \phi$ and $a_2=\inf \phi$ in Lemma \ref{h^u(g)>h^s(g)} there exists a $C^1$ neighborhood $\cU$ of $f$ such that for every $g\in {\Diff}^{1+}(M)\cap \cU$ it holds $h^u(g)>\sup \phi >\inf \phi>h^s(g)$. 
    So, applying Corollary  \ref{finite-EE-forf} we get that each $g\in {\Diff}^{1+}(M)\cap \cU$  is such that $(g,\phi)$ has only a finite number of ergodic equilibrium states.
\end{proof}

{\em{Proof of  Theorem \ref{MainTheoD}}} Suppose that $h^u(f)>\sup \phi>\inf\phi>h^s(f)$.
    Consider $a_1= \sup \phi$ and $a_2=\inf \phi$, by Lemma \ref{h^u(g)>h^s(g)} and Corollary \ref{finiteEE} there exists a $C^1$ neighborhood $\cU$ of $f$ such that  each $g\in {\Diff}^{1+}(M)\cap \cU$ holds $h^u(f)>\sup \phi >\inf \phi>h^s(f)$ and $(g,\phi)$ has finitely many ergodic equilibrium states.

    By  Corollary \ref{finite-EE-forf} the number of equilibrium estates for $(f,\phi)$ is $k>0$. Let $\nu_1,\cdots \nu_k$   be these ergodic equilibrium states and $p_1(f)\cdots p_k(f)$ the  periodic hyperbolic points with orbits ${\mathcal{O}}_1\cdots {\cO}_k$ respectively of $f$ such that ${\cO}_{i}\sim \mu_i$ and

    $$P(f_{|HC(\mathcal{O}_i)},\phi_{|HC(\mathcal{O}_i)})=P(f,\phi).$$

    Let $(g_n)_{n\in \bN}$ be a sequence of $C^{1+}$ diffeomorphisms in $\cU$ converging to $f$ in the $C^1$ topology. For every $g_n$ and $p_i(f)$ consider $p_i(g_n)$ the hyperbolic periodic point given by the structural stability of $p_i(f)$.
%\end{proof}

    \begin{lemma}\label{p(g)SimMujP}
 If $\mu_n \in \cM_{g_n}^e$ is an equilibrium state for $g_n$, then there exists an index $j$ and a subsequence $(\mu_{n_l})_{l\geq 0}$ such that $\mu_{n_l}\sim p_j(g_{n_l})$ for every $l \geq 0$.
       
    \end{lemma}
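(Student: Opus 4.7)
The plan is to adapt the proof of Lemma \ref{p(g)SimMuj} to the equilibrium-state setting, replacing metric entropy by pressure and invoking the pressure version of the uniform-measure estimate, namely Corollary \ref{measures-uniform-pressure}. First I would choose a real number $b$ with $P(f,\phi) \geq b > h^s(f) + \sup \phi$; by continuity of the pressure on $\cU \cap \Diff^{1+}(M)$ (Lemma \ref{h^u(g)>h^s(g)}(3)), for all large $n$ we still have $P_{\mu_n}(g_n, \phi) = P(g_n, \phi) \geq b$. I would then fix $\rho$ in the range allowed by Corollary \ref{measures-uniform-pressure} so that there is a uniform $\beta > 0$ with $\mu_n(\Lambda_\rho(g_n)) \geq 2\beta$ for every $g_n$. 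Passing to a subsequence (which I still index by $n_l$), I can arrange simultaneously that $\mu_{n_l}$ converges weakly-$\ast$ to some $f$-invariant measure $\nu$ and that $\Lambda_\rho(g_{n_l})$ converges in the Hausdorff metric to a compact set $K \subset \Lambda_\rho(f)$; by Remark \ref{HausdorffTopology} we obtain a compact set $\tilde{\Lambda}_\rho(f) \subset K$ of regular points with $\nu(\tilde{\Lambda}_\rho(f)) \geq \beta$.

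The key step is to verify that $\nu$ is an equilibrium state of $(f,\phi)$. By Corollary \ref{mbp-implies-center-lyapunov-exponent<0}, each $\mu_{n_l}$ has negative central Lyapunov exponent, so by \eqref{u-entropy-metric=entropy-metric} we have $h_{\mu_{n_l}}(g_{n_l}) = h^u_{\mu_{n_l}}(g_{n_l})$. Upper semicontinuity of unstable metric entropy (\cite[Theorem A]{Yan21}) gives $\limsup_l h^u_{\mu_{n_l}}(g_{n_l}) \leq h^u_\nu(f)$, while continuity of $\phi$ yields $\int \phi\, d\mu_{n_l} \to \int \phi\, d\nu$, and Lemma \ref{h^u(g)>h^s(g)}(3) gives $P(g_{n_l},\phi) \to P(f,\phi)$. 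Combining,
\[
P(f,\phi) \;=\; \lim_l \bigl(h^u_{\mu_{n_l}}(g_{n_l}) + \tsint \phi\, d\mu_{n_l}\bigr) \;\leq\; h^u_\nu(f) + \tsint \phi\, d\nu \;\leq\; h_\nu(f) + \tsint \phi\, d\nu,
\]
so $\nu$ is an equilibrium state. (If the display above is too loose because $\nu$ need not be ergodic, I will decompose $\nu$ into its ergodic components and apply the same inequality componentwise.) By Corollary \ref{finite-EE-forf} there are only finitely many ergodic equilibrium states $\nu_1,\dots,\nu_k$ of $(f,\phi)$, so $\nu = \sum a_i \nu_i$, and from $\nu(\tilde{\Lambda}_\rho(f)) \geq \beta$ there is an index $j$ with $\nu_j(\tilde{\Lambda}_\rho(f)) \geq \beta/k$.

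To conclude, I would use that $HC_T(\nu_j) = HC(\cO_j)$ (Proposition \ref{HoclinicClassRelation}) to pick a point $x \in W^s(p_j(f)) \pitchfork W^u(p_j(f))$ together with a small radius $\delta_1$ such that, for every $z$ in $B_{\delta_1/2}(x) \cap \tilde{\Lambda}_\rho(f)$, the Pesin manifolds $W^{s/u}_{\delta_1}(z)$ intersect $W^{u/s}_{\delta_1}(x)$ transversally; by taking $x$ in a region of positive $\nu_j$-mass we may arrange $\nu_j(B_{\delta_1/4}(x) \cap \tilde{\Lambda}_\rho(f)) > 0$. Remark \ref{manifoldsexistence} (continuity of the Pesin manifolds jointly in the diffeomorphism and the base point on $\Lambda_\rho$) and Hausdorff convergence produce, for all sufficiently large $l$, a point $y_{n_l} \in \tilde{\Lambda}_\rho(g_{n_l})$ close to $x$ with $\mu_{n_l}\bigl(B_{\delta_1/4}(y_{n_l}) \cap \tilde{\Lambda}_\rho(g_{n_l})\bigr) > 0$, together with points $x_{n_l} \in W^s(p_j(g_{n_l})) \pitchfork W^u(p_j(g_{n_l}))$ whose invariant manifolds $C^1$-approximate those of $x$. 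Then the same transverse-intersection argument used in Lemma \ref{p(g)SimMuj} shows that for every $z$ in a positive $\mu_{n_l}$-measure set the stable and unstable Pesin manifolds of $z$ meet $W^{u/s}_{\delta_1}(x_{n_l})$ transversally, giving $\mu_{n_l} \sim p_j(g_{n_l})$ for all large $l$. The main obstacle is the verification that $\nu$ is an equilibrium state (step two above); once this is in hand the geometric argument transfers verbatim from Lemma \ref{p(g)SimMuj}.
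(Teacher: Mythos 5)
Your proposal follows essentially the same route as the paper's proof: uniform positive mass of $\Lambda_\rho(g_n)$ via Corollary \ref{measures-uniform-pressure}, passing to a weak-$\ast$ and Hausdorff convergent subsequence, showing the limit $\nu$ is an equilibrium state using $h_{\mu_{n_l}}(g_{n_l})=h^u_{\mu_{n_l}}(g_{n_l})$, upper semicontinuity of unstable entropy from \cite[Theorem A]{Yan21} and continuity of the pressure, then decomposing $\nu$ into the finitely many ergodic equilibrium states and transferring the transverse-intersection argument of Lemma \ref{p(g)SimMuj}. The argument is correct and matches the paper's proof in all essential steps.
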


    \begin{proof}                                                                                                                                                          

     We will proceed as in Lemma \ref{p(g)SimMuj}. Let $\rho$  be a real number as in Corollary \ref{measures-uniform-pressure} and for every $n$ consider  $\Lambda_\rho(g_n)$  as in \eqref{Lambarho-definition}. 
     By Corollary \ref{measures-uniform-pressure}, there exists $\beta>0$ such that $\Lambda_\rho(g_n)\geq 2\beta$ and  therefore we can choose a compact subset $\tilde{\Lambda}_\rho(g_n)\subset \Lambda_\rho(g_n)$ of regular points with $\mu(\tilde{\Lambda}_\rho(g_n))\geq \beta$.  
     Let $n_l$ be an increasing sequence of natural numbers such that $\mu_{n_l}$ converges to $\nu$ in the weak * topology and $\Lambda_\rho(g_{n_l})$ converges to a compact set $K$ in the Hausdorff metric. Observe that $K\subset \Lambda_\rho(f)$; so, by Remark \ref{HausdorffTopology}, there is a subset $\tilde{\Lambda}_\rho(f)\subset K$ as a compact set of regular points for $f$ with $\nu(\Tilde{\Lambda}_\rho(f))\geq \frac{\beta}{2}$.

The measure  $\nu$ is an equilibrium state of $(f,\phi)$. In effect, since $\mu_n\in \cM_{g_n}^e$  is an equilibrium state of $(g_n,\phi)$, by Corollary \ref{mbp-implies-center-lyapunov-exponent<0} and Equation \eqref{L-Yformula}, we have that 
$$h^u_{\mu_n}(g_n) = h_{\mu_n}(g_n).$$ 
            
   Furthermore, as shown in \cite[Theorem A]{Yan21}, we have:

\[
\limsup_{n_l} h^u_{\mu_{n_l}}(g_n) \leq h_\nu^u(f)
\]
and as $\mu_{n_l}\to \nu$ when $n\to \infty$,

$$\lim_{n\to \infty}\int \phi d\mu_{n_l}=\int \phi d\nu.$$

Therefore, since each $\mu_n$ is  an equilibrium state  for $(g_n,\phi)$, the topological pressure is a continuous function on ${\cU} \cap {\Diff}^{r+1}(M)$ ,  and $h^u_\mu(f)\leq h_\mu(f)$, we conclude that $\nu$ is also an equilibrium state of $(f,\phi)$.

    Thus, the measure $\nu$ is a convex combination of $\nu_1,\cdots,\nu_k$, and there is $j$ such that $\nu_j(\Tilde{\Lambda}_\rho(f))\geq \frac{\beta}{2}$. Consider $p_j(f)$ such that $p_j\sim \mu_j$ and repeating the argument in \ref{p(g)SimMujP} Theorem \ref{MainTheoB} ,  by the continuity of the invariant manifolds with $g_n$, we conclude that for every $l$ large enough $p_i(g_{n_l})\sim \mu_{n_l}$. Thus, the proof is concluded.
    
            Finally, with the same argument of the proof of  Theorem $\ref{MainTheoB}$, we conclude that $g_n$ has at most $k$ ergodic equilibrium states when $n$ is big enough.
            
\end{proof}

{\em{Acknowledgements.}} We are thankful to Jiagang Yang and Fan Yang for helpful conversations on this work.

%%%%%%%%%%%%%%%%%%%%%%%%%%%%%%%%%%%%%%%%%
%%%%%%%%%%%%%%%%%%%%%%%%%%%%%%%%%%%%%%%%

\end{document}